\newcommand\authormark[1]{\textsuperscript{#1}}
\newtheorem{theorem}{Theorem}[section]
\newtheorem{lemma}{Lemma}[section]
\newtheorem{remark}{Remark}[section]
\newtheorem{definition}{Definition}[section]
\numberwithin{equation}{section}
\numberwithin{figure}{section}
\numberwithin{table}{section}
\DeclareMathAlphabet{\mathcal}{OMS}{cmsy}{m}{n}
\begin{document}

\title{Asymptotic-preserving conservative semi-Lagrangian discontinuous Galerkin schemes for the Vlasov-Poisson system in the quasi-neutral limit}


\author{Xiaofeng Cai\authormark{1,2},   Linghui Kong\authormark{3,*}, Dmitri Kuzmin\authormark{4}, and Li Shan\authormark{5}}

\address{\authormark{1}Department of Mathematics, Faculty of Arts and Sciences, Beijing Normal University, Zhuhai, 519087, P.R. China (xfcai@bnu.edu.cn).\\
\authormark{2}Guangdong Provincial/Zhuhai Key Laboratory of Interdisciplinary Research and Application for Data Science, Beijing Normal-Hong Kong Baptist University, Zhuhai, 519087, P.R. China.\\
\authormark{3}Department of Mathematics, Faculty of Arts and Sciences, Beijing Normal University, Zhuhai, 519087, P.R. China (linghuikong@mail.bnu.edu.cn).\\
\authormark{4}Institute of Applied Mathematics (LS III), TU Dortmund University Vogelpothsweg 87, D-44227 Dortmund, Germany (kuzmin@math.tu-dortmund.de). \\ 
\authormark{5}Department of Mathematics, Shantou University, Shantou 515063, P.R. China (lishan@stu.edu.cn).}

\email{\authormark{*}Corresponding author at: Department of Mathematics, Faculty of Arts and Sciences, Beijing Normal University, Zhuhai, 519087, P.R. China} 


\begin{abstract}
We discretize the Vlasov-Poisson system using 
conservative semi-Lagrangian (CSL) discontinuous Galerkin (DG) schemes
that are asymptotic preserving (AP) in the quasi-neutral limit.
The proposed method (CSLDG) relies on two key ingredients: the CSLDG discretization and a reformulated Poisson equation (RPE).
The use of the CSL formulation ensures local mass conservation and circumvents the Courant-Friedrichs-Lewy condition, while the DG method provides high-order accuracy for capturing fine-scale phase space structures of the distribution function.
The RPE is derived by the Poisson equation coupled with
moments of the Vlasov equation.
The synergy between the CSLDG and RPE components makes it possible to obtain reliable numerical solutions, even when the spatial and temporal resolution might not fully resolve the Debye length.
We rigorously prove that the proposed method is asymptotically stable, consistent and satisfies AP properties. 
Moreover, its efficiency is maintained across non-quasi-neutral and quasi-neutral regimes. 
These properties of our approach are essential for accurate and robust numerical simulation of complex electrostatic plasmas.
Several numerical experiments verify the accuracy, stability and efficiency of the proposed CSLDG schemes.
\end{abstract}

\begin{keywords}
plasma physics, Vlasov-Poisson system, asymptotic preserving, conservative semi-Lagrangian discontinuous Galerkin method, stability and consistency analysis 
\end{keywords}


\section{Introduction}

\hspace{4pt}
Plasma modeling plays a central role in applications such as nuclear fusion, semiconductor technology, environmental purification, and materials processing. Since neither theoretical nor experimental studies give sufficient insight into the inherently complex and multiscale plasma dynamics, numerical simulations are indispensable for most problems of practical interest.
The choice of an appropriate mathematical model depends on the objectives of the numerical study and the  physical scales that need to be resolved.
At the \textbf{microscopic scale}, the motion and interactions of
 particles that constitute a molecular system can be simulated using 
particle-particle / particle-mesh methods \cite{Luty01121994}, fast multipole methods \cite{GREENGARD1987325} or the random batch method  \cite{JIN2020108877}.
At the \textbf{mesoscopic scale}, the focus shifts to the evolution of distribution functions, with representative kinetic models such as the Vlasov-Poisson and Vlasov-Maxwell systems, commonly solved using particle-in-cell (PIC)\cite{Villasenor1992,LAPENTA2017349}, Eulerian\cite{Cheng2013,Cheng2014} or semi-Lagrangian methods\cite{QIU2011DG,Cai2018}. 
At the \textbf{macroscopic scale}, attention is restricted to fluid quantities like density, velocity, pressure and magnetic field. 
Fluid models, such as the two-fluid system, magnetohydrodynamics (MHD), and Euler-Poisson equations, are derived from kinetic models via moment methods and solved using finite volume methods (FVM)\cite{tang2010gas,stv2380}, discontinuous Galerkin (DG) methods \cite{CHENG2013255,li2005locally} or flux-corrected transport finite element schemes \cite{Kuzman2017}.
This work concentrates on the mesoscopic Vlasov-Poisson system.

A fundamental characteristic of plasma dynamics is its ability to shield the electric potential acting upon it, which refers to the quasi-neutral assumption that, on macroscopic scales, the net charge density in the plasma is effectively zero.
The physical model studied involves several interacting scales, among which the Debye length $\lambda$ and the electron plasma period $\omega_p^{-1}$ are particularly important.
Realistic plasma simulations must properly capture the quasi-neutrality. 
To develop multiscale simulation tools that meet this challenging requirement, the powerful framework of asymptotic-preserving (AP) methods \cite{Jin1999,Jin2022} was introduced in the literature. Examples of AP schemes that recover the correct quasi-neutral limit without requiring refined meshes or small time steps include AP-PIC\cite{DEGOND2006,DEGOND2010,Ji2023}, AP-SL\cite{DEGOND2009}, AP-DUGKS\cite{LIU2020}, and AP conservative semi-Lagrangian\cite{LIU2025} approaches, as well as Hermite reformulations of the Vlasov-Poisson system as a hyperbolic problem \cite{blaustein2025sap}. The asymptotic behavior of
such schemes was analyzed theoretically in \cite{Degond2008,Crou2016}. 

Since Vlasov-type systems are Hamiltonian, numerical schemes must preserve fundamental invariants such as mass, energy and charge. 
Considerable progress has been made in developing conservative schemes\cite{Cheng2022,LIU2023,CHEN20117018,CHEN201573,CHEN2020109228,QT2025}. 
In particular, the conservative semi-Lagrangian formulation can rigorously preserve mass\cite{QIU2011DG}. 
Moreover, this formulation circumvents the CFL stability constraint, allowing the time step to be determined only by accuracy considerations \cite{Cai2017}.
This makes the conservative semi-Lagrangian framework a powerful and popular tool for solving transport problems in the context of kinetic simulations\cite{Cai2018,Cai2019,QIU2011FD,QIU2010FV,QIU2011DG,CROUSEILLES2010}.

The high dimensionality of Vlasov-type systems arises from the distribution function being defined on a phase space.
The curse of dimensionality makes dimensional splitting an attractive strategy for reducing computational cost by decomposing the problem into a series of one-dimensional subproblems\cite{Strang,CHENG1976330,FOREST1990105,ROSSMANITH20116203}.
This allows the solution of one-dimensional problems by combining the CSL scheme with different spatial discretization techniques, such as the finite difference method (FDM)\cite{chen2021adaptive}, FVM\cite{ZHENG2022114973,ZHENG2024}, and DG methods\cite{QIU2011DG}.
Compared with FDM and FVM, DG discretizations provide enhanced flexibility, adaptivity, and compactness, and are especially well suited for complex geometries.


The main contribution of this work is the development of asymptotic-preserving conservative semi-Lagrangian discontinuous Galerkin (AP-CSLDG) schemes for the  Vlasov-Poisson system in the quasi-neutral limit.
The proposed scheme conserves the number of particles and preserves positivity of the distribution function. Stability is maintained across multiple scales without CFL or Debye-length restrictions. The high-order accuracy of the employed DG discretization is leveraged to capture fine-scale plasma dynamics efficiently.
Moreover, we perform rigorous theoretical analysis of our scheme's AP properties, including asymptotic consistency and stability.
Finally, we demonstrate the accuracy, stability and efficiency of the proposed method through several numerical experiments.
%

The paper is organized as follows: Section 2 reviews the physical background and kinetic models. Section 3 presents the formulation of the  AP-CSLDG scheme based on dimensional splitting and describes its implementation. Section 4 is devoted to the formal asymptotic analysis of our scheme. Section~5 shows the results of numerical experiments, including Landau damping, two-stream instability, and bump-on-tail instability. Finally, Section 6 concludes the paper and discusses future research directions.

\section{The reformulated Vlasov-Poisson system and its asymptotic limit}

\hspace{4pt}
In this section, we provide a comprehensive overview of the plasma models under investigation. In the asymptotic limit, the Poisson equation of the original dimensionless VP system
degenerates into an algebraic quasi-neutrality constraint. Recognizing the challenge posed by this degeneracy, we consider a reformulated VP (RVP) system, in which a reformulated Poisson equation (RPE) is derived from the Poisson equation and macroscopic moments of the Vlasov equation. We also show that the RVP system is equivalent to the original VP system if and only if the initial conditions are well prepared.

\subsection{The dimensionless VP system}
\hspace{4pt}
We consider a simple plasma model that neglects collisions of particle populations, as well as electromagnetic effects. The electrostatic field or potential due to charged particles is regarded as an external force on the particles. Positive ions are immovable and constitute a positive background charge that is uniformly distributed in space. A model that satisfies the above assumptions is the one-component (electron) VP system. The
 dimensionless form of this system is given by
\begin{align*}
  \left\{
    \begin{aligned}
      &\frac{\partial f}{\partial t}+\textbf{v}\cdot \nabla_{\textbf{x}} f+\nabla_{\textbf{x}}\phi\cdot\nabla_{\textbf{v}}f=0,\\
      &\lambda^2\Delta_{\textbf{x}} \phi = \rho-1,
    \end{aligned}
  \right.
\end{align*}
where $f(\textbf{x},\textbf{v},t)$ is the electron distribution function,
$\phi(\textbf{x},t)$ is the electric potential, and $\rho(\textbf{x},t)=\int_{\Omega_v}f\mathrm d\textbf{v}$ is the density. The involved independent variables are the phase space coordinates $(\textbf{x},\textbf{v})\in \Omega_x \times \Omega_v \subset {\mathbb{R}}^d \times {\mathbb{R}}^d~(d=1,2,3)$ and the time $t>0$. 
The dimensionless Debye length $\lambda$ is defined as 
$$
  \lambda=\frac{\lambda_D}{x_0},\quad\lambda_D=\left(\frac{\varepsilon_0 k_B T_0}{e^2 n_0}\right)^{1/2},
$$
where $x_0$ is a characteristic length scale of the problem at hand, $\varepsilon_0$ is the vacuum permittivity, $k_B$ is the Boltzmann constant, $T_0$ is the electron temperature, $e$ is the positive elementary charge, and $n_0$ is a given constant ion density.

In this paper, we focus our attention on the case $d=1$. The corresponding phase space is two-dimensional, i.e., $(x,v) \in \Omega_x \times \Omega_v \subset \mathbb{R} \times \mathbb{R}$, and the VP system can be written as
\begin{subnumcases}{\label{VP1-dim}}
  $$\partial_t f+v~\partial_xf+\partial_x\phi~\partial_vf=0,$$ \label{Vlasov1dim1}\\
  $$\lambda^2\partial_{xx} \phi = \rho-1$$\label{Poisson1dim}
\end{subnumcases}
with $\rho(x,t)=\int_{\Omega_v}f\mathrm dv$. This model is denoted by {\rm $P^{\lambda}$} for further reference.
Once the distribution function $f$ is known, the conservative variables can be obtained by calculating the moments
\begin{align}
  (\rho,\rho u, W)^T = \int_{\Omega_v} (1,v,v^2/2)^T f \mathrm dv.
\end{align}

\subsection{The quasi-neutral VP system and its reformulation}
\hspace{4pt}
In the quasi-neutral limit $\lambda \to 0$, the Poisson equation of the VP system \eqref{VP1-dim} reduces to the quasi-neutrality constraint 
$ \rho=1$.
The quasi-neutral VP system
\begin{subnumcases}{\label{VP1-dim-qls}}
    $$\partial_t f+v~\partial_xf+\partial_x\phi~\partial_vf=0,$$\label{Vlasov1dim2}\\
    $$\rho=1$$\label{qlstate}
\end{subnumcases}
is denoted by ${\rm P}^0$. In this system, the electrostatic potential becomes the Lagrange multiplier of the constraint \eqref{qlstate}. A way to recover an explicit equation for the potential $\phi$ in this case is based on the idea of reformulating the quasi-neutral VP system \eqref{VP1-dim-qls}. To derive a suitable reformulation, we first integrate the Vlasov equation \eqref{Vlasov1dim2} with respect to the velocity variable $v$. Using the quasi-neutrality constraint \eqref{qlstate} leads to the divergence-free constraint 
\begin{align}\label{0st-ql}
  \partial_x(\rho u)=0
\end{align}
for the momentum.
Taking the first moment of the Vlasov equation \eqref{Vlasov1dim2} yields
\begin{align}\label{1st}
  \partial_t(\rho u)+\partial_xS=\rho~\partial_x\phi,
\end{align}
where $S$ is the momentum tensor defined as
\begin{align}
  S = \int_{\Omega_v} v^2 f\mathrm dv.
\end{align}
The explicit equation
\begin{align*}
  \partial_x(\rho~\partial_x\phi)=\partial_{xx}S
\end{align*}
 for the potential $\phi$
can then be obtained by taking the divergence of \eqref{1st} and using \eqref{0st-ql}.
The resulting reformulation of quasi-neutral VP system, denoted by ${\rm RP}^0$, is thus given by
\begin{subnumcases}{\label{VP1-dim-ql}}
  $$\partial_t f+v~\partial_xf+\partial_x\phi~\partial_vf=0,$$\\
  $$\partial_x(\rho~\partial_x\phi)=\partial_{xx}S.$$
\end{subnumcases}
The ${\rm RP}^0$ reformulation is equivalent to the original system \eqref{VP1-dim-qls} if and only if the initial conditions are well prepared. The corresponding requirements are formulated in
the following lemma \cite{Crou2016}.
\begin{lemma}
The reformulated Vlasov-Poisson system ${\rm RP}^0$ is equivalent to the Vlasov-Poisson system ${\rm P}^0$ if and only if the initial condition is well prepared for the quasi-neutral regime. More precisely,
$$
    {\rm RP}^0 \Rightarrow {\rm P}^0
$$
if and only if 
\begin{align*}
    \left\{
        \begin{aligned}
            &\rho(x,0)=1,\\
            &\partial_x(\rho u)(x,0)=0.
        \end{aligned}
    \right.
\end{align*}
\end{lemma}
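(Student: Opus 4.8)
The plan is to prove the two implications separately, observing that all of the content lies in the ``if'' direction (well-prepared data $\Rightarrow$ the ${\rm RP}^0$ solution also solves ${\rm P}^0$), while the ``only if'' direction is immediate. The single tool needed is the moment hierarchy obtained by integrating the Vlasov equation against $1$ and $v$ over $\Omega_v$: since the Vlasov equation is common to ${\rm P}^0$ and ${\rm RP}^0$, the continuity equation $\partial_t\rho+\partial_x(\rho u)=0$ and the first-moment equation~\eqref{1st}, $\partial_t(\rho u)+\partial_xS=\rho\,\partial_x\phi$, hold for solutions of either system, assuming $f$ is smooth enough and decays sufficiently fast in $v$ for the boundary terms in the integration by parts to vanish.

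For the ``if'' direction, let $(f,\phi)$ solve ${\rm RP}^0$ with $\rho(x,0)=1$ and $\partial_x(\rho u)(x,0)=0$; it suffices to show $\rho\equiv 1$, since the Vlasov equation is already shared with ${\rm P}^0$. Differentiating~\eqref{1st} in $x$ and substituting the reformulated Poisson equation $\partial_x(\rho\,\partial_x\phi)=\partial_{xx}S$ gives $\partial_t\big(\partial_x(\rho u)\big)=\partial_x(\rho\,\partial_x\phi)-\partial_{xx}S=0$, so $\partial_x(\rho u)(x,t)\equiv\partial_x(\rho u)(x,0)=0$ by the well-prepared assumption. The continuity equation then reduces to $\partial_t\rho=-\partial_x(\rho u)=0$, whence $\rho(x,t)\equiv\rho(x,0)=1$. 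Therefore the quasi-neutrality constraint~\eqref{qlstate} holds for all $t$ and $(f,\phi)$ solves ${\rm P}^0$.

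For the ``only if'' direction, suppose the solution $(f,\phi)$ of ${\rm RP}^0$ also solves ${\rm P}^0$. Then $\rho\equiv 1$ by~\eqref{qlstate}, so $\rho(x,0)=1$, and the continuity equation gives $\partial_x(\rho u)=-\partial_t\rho\equiv 0$, in particular at $t=0$; hence the data are well prepared. I do not expect a genuine obstacle here: the routine part is the derivation of the two moment equations, and the only point to watch is that one must invoke the \emph{general} moment identities rather than their quasi-neutral specializations (e.g.~\eqref{0st-ql}), since $\rho\equiv 1$ is the conclusion, not a hypothesis. The structural reason the argument works is that the reformulated Poisson equation is precisely $\partial_x$ of the compatibility identity $\partial_t\big(\partial_x(\rho u)\big)=0$, which is exactly what propagates well-preparedness forward in time.
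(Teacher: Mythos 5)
Your proof is correct and follows essentially the same route as the paper: both hinge on the observation that the reformulated Poisson equation is exactly the $x$-derivative of the first-moment equation, yielding $\partial_t\bigl(\partial_x(\rho u)\bigr)=0$, after which the continuity equation propagates $\rho\equiv 1$ from well-prepared data. Your explicit separation of the two implications and your remark about using the general (rather than quasi-neutral) moment identities are just slightly more careful presentations of the same argument.
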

\begin{proof}
  Taking the divergence of the first order moment of the Vlasov equation, we find that $\partial_t(\partial_x (\rho u))=0$ and thus
  $$
    \partial_x(\rho u)(x,t)=\partial_x(\rho u)(x,0).
  $$
  On the other hand, the continuity equation gives 
  $$
    \rho(x,t)=\rho(x,0)+t\partial_x(\rho u)(x,0).
  $$
  This shows that ${\rm RP}^0$ implies ${\rm P}^0$ if and only if $\rho(x,0)=1$ and $\partial_x(\rho u)(x,0)=0$, which is the case if and only if the initial data is consistent with the quasi-neutral
  state.
\end{proof}
\begin{remark}
  ${\rm P}^0 \Rightarrow {\rm RP}^0$ is obviously true, since the reformulated system ${\rm RP}^0$ is derived from the original system ${\rm P}^0$.
\end{remark}


An asymptotic-preserving RVP system for the case $\lambda>0$ can be derived
similarly. Taking the zeroth moment of the Vlasov equation \eqref{Vlasov1dim1}
with respect to the velocity variable $v$, we obtain the continuity equation 
\begin{align}\label{continuity}
  \partial_t\rho+\partial_x(\rho u)=0.
\end{align}
The first moment of the Vlasov equation \eqref{Vlasov1dim1} yields the momentum equation
\begin{align}\label{current}
  \partial_t(\rho u)+\partial_xS-\rho\partial_x\phi=0.
\end{align}
Differentiating \eqref{continuity} with respect to $t$ and \eqref{current} with respect to $x$,  we eliminate the momentum $\rho u$ and arrive at
\begin{align}\label{elmJ}
  \partial_{tt}\rho-\partial_{xx}S+\partial_x(\rho\partial_x\phi)=0.
\end{align}
Invoking the Poisson equation \eqref{Poisson1dim}, we obtain the RPE
\begin{align*}
  -\partial_x[(\lambda^2\partial_{tt}+\rho)\partial_x\phi]=-\partial_{xx}S.
\end{align*}
The resulting generalized RVP system, denoted by ${\rm RP}^{\lambda}$, reads
\begin{subnumcases}{\label{RVP}}
  $$\partial_t f+v~\partial_xf+\partial_x\phi~\partial_vf=0,$$\\
  $$-\partial_x[(\lambda^2\partial_{tt}+\rho)\partial_x\phi]=-\partial_{xx}S.$$\label{RPE}
\end{subnumcases}

This reformulated system represents an appropriate model for plasma simulations
under conditions that correspond or are close to the quasi-neutral regime. As
in the case $\lambda=0$, the following result regarding the equivalence of
${\rm RP}^{\lambda}$ and ${\rm P}^{\lambda}$ can easily be proved.

\begin{lemma}
The ${\rm RVP}$ system {\rm ($\text{RP}^{\lambda}$)} implies the ${\rm VP}$ system {\rm ($\text{P}^{\lambda}$)} if and only if the electric potential and its time derivative satisfy the two Poisson equations at the initial time.
More precisely,
\begin{align*}
  {\rm RP}^{\lambda} \Rightarrow {\rm P}^{\lambda}
\end{align*}
if and only if
\begin{align}
  \left\{
  \begin{aligned}
    &\lambda^2 \partial_{xx}\phi(x,0) = \rho(x,0)-1,\\
    &\lambda^2 \partial_t\partial_{xx}\phi(x,0)=-\partial_x(\rho u)(x,0).
  \end{aligned}
  \right.
\end{align}
\end{lemma}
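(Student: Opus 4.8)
The plan is to follow the same strategy as in the proof of Lemma~2.1 for the case $\lambda=0$. Since the Vlasov equation is common to both $\text{RP}^{\lambda}$ and $\text{P}^{\lambda}$, any solution of $\text{RP}^{\lambda}$ automatically satisfies the continuity equation \eqref{continuity}, the momentum equation \eqref{current}, and hence the combined identity \eqref{elmJ}. I would introduce the defect $g(x,t) := \lambda^2\partial_{xx}\phi - \rho + 1$, whose vanishing is precisely the Poisson equation \eqref{Poisson1dim}, and reduce the whole statement to showing that $g\equiv 0$ if and only if the two well-preparedness conditions hold.

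First I would rewrite the RPE \eqref{RPE} as $\lambda^2\partial_{tt}\partial_{xx}\phi+\partial_x(\rho\,\partial_x\phi)=\partial_{xx}S$ and subtract \eqref{elmJ}, which reads $\partial_{tt}\rho-\partial_{xx}S+\partial_x(\rho\,\partial_x\phi)=0$. The terms $\partial_x(\rho\,\partial_x\phi)$ and $\partial_{xx}S$ cancel identically, leaving $\partial_{tt}(\lambda^2\partial_{xx}\phi-\rho)=0$, i.e.\ $\partial_{tt}g=0$. Integrating twice in time then gives $g(x,t)=g(x,0)+t\,\partial_tg(x,0)$, so $g\equiv 0$ if and only if both $g(x,0)=0$ and $\partial_tg(x,0)=0$. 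Now $g(x,0)=\lambda^2\partial_{xx}\phi(x,0)-\rho(x,0)+1$ vanishes exactly when the first stated condition holds, while $\partial_tg(x,0)=\lambda^2\partial_t\partial_{xx}\phi(x,0)-\partial_t\rho(x,0)=\lambda^2\partial_t\partial_{xx}\phi(x,0)+\partial_x(\rho u)(x,0)$, using \eqref{continuity} at $t=0$, vanishes exactly when the second stated condition holds. This establishes the ``$\Leftarrow$'' direction: under the two conditions $g\equiv0$, hence \eqref{Poisson1dim} holds and $\text{P}^{\lambda}$ is recovered. For the converse, if $\text{RP}^{\lambda}\Rightarrow\text{P}^{\lambda}$ then \eqref{Poisson1dim} holds for all $t\ge 0$; evaluating it at $t=0$ yields the first condition, and differentiating it in time and invoking \eqref{continuity} (so that $\lambda^2\partial_t\partial_{xx}\phi=\partial_t\rho=-\partial_x(\rho u)$), then setting $t=0$, yields the second.

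I do not expect a genuine obstacle here, as the argument is essentially a second-order-in-time integration of the identity $\partial_{tt}g=0$. The only points that require care are (i) verifying that the $\partial_x(\rho\,\partial_x\phi)$ and $\partial_{xx}S$ contributions cancel exactly, so that no spurious function of $t$ is introduced when passing from the RPE to $\partial_{tt}g=0$, and (ii) carefully tracking the two time-integration constants $g(x,0)$ and $\partial_tg(x,0)$, which is exactly where the well-preparedness of $\phi(\cdot,0)$ and $\partial_t\phi(\cdot,0)$ enters. Hypotheses on the spatial boundary conditions are needed only to make the moment equations \eqref{continuity}--\eqref{elmJ} meaningful, not for the manipulations above.
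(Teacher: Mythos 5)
Your proposal is correct and follows essentially the same route as the paper: the paper likewise combines the divergence of the momentum equation with the RPE and integrates twice in time, arriving at the identity $\rho(x,t)-1-\lambda^2\partial_{xx}\phi(x,t) = \bigl(\rho(x,0)-1-\lambda^2\partial_{xx}\phi(x,0)\bigr)+t\bigl(\partial_x(\rho u)(x,0)+\lambda^2\partial_t\partial_{xx}\phi(x,0)\bigr)$, which is exactly your $g(x,t)=g(x,0)+t\,\partial_t g(x,0)$. Packaging the argument as $\partial_{tt}g=0$ for the defect $g=\lambda^2\partial_{xx}\phi-\rho+1$ is only a cosmetic reorganization of the same cancellations and integration constants.
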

\begin{proof}
  Taking the divergence of the momentum equation \eqref{current}, we obtain
  $$
    \partial_t\left(\partial_x (\rho u)\right)=-\partial_{xx}S+\partial_x \left(\rho \partial_x \phi\right) .
  $$
  Using \eqref{RPE} and integrating with respect to $t$ yields
  $$
    \partial_x (\rho u)(x, t)=-\lambda^2 \partial_t \partial_{xx} \phi(x, t)+\left(\partial_x (\rho u)(x, 0)+\lambda^2 \partial_t \partial_{xx} \phi(x, 0)\right) .
  $$
  Finally, inserting this result into the continuity equation \eqref{continuity} leads to
  $$
    \rho(x, t)-1=\lambda^2 \partial_{xx} \phi(x, t)+\left(\rho(x, 0)-1-\lambda^2 \partial_{xx} \phi(x, 0)\right)+t\left(\partial_x (\rho u)(x, 0)+\lambda^2 \partial_t \partial_{xx} \phi(x, 0)\right),
  $$
  which proves the assertion of the lemma.
\end{proof}

\section{Asymptotic-preserving conservative semi-Lagrangian DG schemes}

\hspace{4pt}
To solve the RVP system numerically, we first split it into two parts: a linear system $H_{\mathbf{f}}$ and a nonlinear system $H_{\mathbf{E}}$. Next, we introduce the conservative semi-Lagrangian discontinuous Galerkin (CSLDG) method for the 1D linear transport equation to be solved in the $H_{\mathbf{f}}$ stage. The RPE for the electrostatic potential $\phi$ is discretized in space using the Fourier spectral method. A positivity-preserving limiter is applied to ensure physical admissibility of numerical solutions. Finally, we combine the solvers for the subproblems $H_{\mathbf{f}}$  and $H_{\mathbf{E}}$ of the dimension-splitting method. The resulting algorithm belongs to the family of asymptotic-preserving conservative semi-Lagrangian discontinuous Galerkin (AP-CSLDG) schemes.

\subsection{Dimension-splitting method}
\hspace{4pt}
As mentioned in the outline of this section,
we split the RVP system \eqref{RVP} into the linear system
\begin{align}
   \mbox{$H_{\mathbf{f}}$:}\quad  \left\{
    \begin{aligned}
        &\partial_tf+v\partial_xf=0,\\
        &\partial_t(\partial_x\phi)=0
    \end{aligned}
    \right.
\end{align}
and the nonlinear system 
\begin{align}
  \mbox{$H_{\mathbf{E}}$:} \qquad  \left\{
    \begin{aligned}
        &\partial_tf+\partial_x\phi\partial_vf=0,\\
        &-\partial_x[(\lambda^2\partial_{tt}+\rho)\partial_x\phi]=-\partial_{xx}S.
    \end{aligned}
    \right.
\end{align}
The exact evolution operators of the two subproblems
are denoted by $\mathcal{S}_{\mathbf{f}}$ and $\mathcal{S}_{\mathbf{E}}$, respectively.

Given a finite time $T>0$, we consider discrete time levels $0=t^0<t^1<\cdots<t^N=T$
corresponding to a uniform partition of 
$[0,T]$ into $N$ time intervals of length $\Delta t=\frac{T}{N}$. The numerical solution of \eqref{RVP} is advanced from $t^n$ to $t^{n+1}$ via the Lie splitting method consisting of two substeps:
\begin{subnumcases}{\label{Liesplitting}}
    $$f^*(x,v)=\mathcal{S}_{\mathbf{f}}(\Delta t)f(x,v,t^n),$$\\
    $$f(x,v,t^{n+1})=\mathcal{S}_{\mathbf{E}}(\Delta t)f^*(x,v)$$
\end{subnumcases}
or the Strang splitting method consisting of three substeps:
\begin{subnumcases}{\label{Strangsplitting}}
    $$f^*(x,v)=\mathcal{S}_{\mathbf{f}}(\frac{\Delta t}{2})f(x,v,t^n),$$\\
    $$f^{**}(x,v)=\mathcal{S}_{\mathbf{E}}(\Delta t)f^*(x,v),$$\\
    $$f(x,v,t^{n+1})=\mathcal{S}_{\mathbf{f}}(\frac{\Delta t}{2})f^{**}(x,v).$$
\end{subnumcases}
Under suitable smoothness assumptions, the fractional step method using the Lie splitting scheme~\eqref{Liesplitting} or the Strang splitting scheme \eqref{Strangsplitting} is first-order or second-order accurate, respectively.

In the following description of time-discrete schemes, we denote the density by $\rho^n=\rho(x,t^n)$, momentum by $(\rho u)^n=(\rho u)(x,t^n)$, momentum tensor by $S^n=S(x,t^n)$ and electrostatic potential by $\phi^n=\phi(x,t^n)$. To solve the nonlinear system $H_{\mathbf{E}}$, we need to freeze the coefficient of the flux term, i.e, the derivative of the electrostatic potential $\partial_x\phi$. The details of this linearization are provided below.

\subsubsection{Lie splitting method}

If the quasi-neutral RVP system is solved using the algorithm based on the
Lie splitting scheme \eqref{Liesplitting}, the
electrostatic potential $\phi$ should satisfy
\cite{Crou2016}
\begin{align}\label{RPEqslim}
    \partial_x \left(\rho^n \partial_x \phi\right)=\partial_{xx}S^n.
\end{align}
A first-order temporal semi-discretization of the RPE \eqref{RPE} is given by
\cite{DEGOND2010,Crou2016}
\begin{align}\label{RPEdis1st}
    -\partial_x\left[\left(\lambda^2+\rho^n \Delta t^2\right) \partial_x \phi^{n+1} \right]=-\Delta t^2 \partial_{x x} S^n+\Delta t\partial_x(\rho u)^n-\rho^n+1.
\end{align}
It is asymptotic preserving, since \eqref{RPEqslim} is the quasi-neutral limit of \eqref{RPEdis1st}.

\subsubsection{Strang splitting method}

As shown in \cite{Crou2016}, there is no easy way to construct second-order schemes preserving quasi-neutral states in the context of Strang's splitting for system \eqref{RVP}. Nevertheless, we can still freeze the coefficient $\partial_x\phi^{n+\frac{1}{2}}=\partial_x\phi\left(x,t^n+\frac{1}{2}\Delta t\right)$ as proposed in \cite{CHENG1976330}. When it comes to calculating a numerical approximation to $\partial_x\phi^{n+\frac{1}{2}}$, we discretize the RPE \eqref{RPE} in time as follows:
\begin{align}\label{RPEdis2nd} 
    -\partial_x\left[\left(\lambda^2-\frac{\Delta t^2}{24} \rho^{n+\frac{1}{2}}\right) \partial_x \phi^{n+\frac{1}{2}}\right] =\frac{\Delta t^2}{24} \partial_{x x} S^{n+\frac{1}{2}}  +\frac{\Delta t}{3} \partial_x(\rho u)^{n+\frac{1}{2}}+\frac{\Delta t}{6} \partial_x(\rho u)^n-\rho^n+1 .
\end{align}
The second-order accuracy of the above discretization is shown in the following theorem.

\begin{theorem}
The time discretization \eqref{RPEdis2nd} of the RPE \eqref{RPE} is second-order accurate. More precisely, if $\bar{\phi}^{n+\frac{1}{2}}$ is a solution of \eqref{RPEdis2nd} and $\phi^{n+\frac{1}{2}}$ is a solution of the Poisson equation \eqref{Poisson1dim}, then
    \begin{align*}
        \bar{\phi}^{n+\frac{1}{2}}-\phi^{n+\frac{1}{2}}=\mathcal{O}(\Delta t^2)
    \end{align*}
or
    \begin{align*}
        \partial_x\bar{\phi}^{n+\frac{1}{2}}-\partial_x\phi^{n+\frac{1}{2}}=\mathcal{O}(\Delta t^2).
    \end{align*}
\end{theorem}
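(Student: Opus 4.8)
The plan is to verify that the exact electric potential $\phi^{n+\frac12}$ — meaning the solution of the Poisson equation \eqref{Poisson1dim} at $t=t^n+\tfrac12\Delta t$, with $\rho,\rho u,S$ the exact velocity moments of the Vlasov solution — satisfies the discrete equation \eqref{RPEdis2nd} up to a residual that is high order in $\Delta t$, and then to transfer this consistency estimate to $\bar\phi^{n+\frac12}-\phi^{n+\frac12}$ via the elliptic stability of \eqref{RPEdis2nd}.

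First I would substitute $\phi^{n+\frac12}$ into the left-hand side of \eqref{RPEdis2nd}. Expanding the outer derivative and using $\lambda^2\partial_{xx}\phi^{n+\frac12}=\rho^{n+\frac12}-1$ rewrites it as $-(\rho^{n+\frac12}-1)+\tfrac{\Delta t^2}{24}\partial_x(\rho^{n+\frac12}\partial_x\phi^{n+\frac12})$, with no remainder. Next I would Taylor-expand the quantities on the right-hand side of \eqref{RPEdis2nd} about $t^n+\tfrac12\Delta t$, keeping terms through order $\Delta t^3$: the $\mathcal O(1)$ terms reproduce $-(\rho^{n+\frac12}-1)$; the $\mathcal O(\Delta t)$ terms cancel upon using the continuity equation \eqref{continuity}, $\partial_t\rho=-\partial_x(\rho u)$; the $\mathcal O(\Delta t^2)$ terms collapse to exactly $\tfrac{\Delta t^2}{24}\partial_x(\rho^{n+\frac12}\partial_x\phi^{n+\frac12})$ after inserting the time-differentiated continuity equation $\partial_{tt}\rho=-\partial_x\partial_t(\rho u)$ and the momentum equation \eqref{current}, $\partial_t(\rho u)=\rho\partial_x\phi-\partial_x S$ (equivalently, relation \eqref{elmJ}) — the two resulting $S$-contributions cancel; and the $\mathcal O(\Delta t^3)$ terms cancel in turn upon using $\partial_{ttt}\rho=-\partial_x\partial_{tt}(\rho u)$. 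Matching with the left-hand side, the exact $\phi^{n+\frac12}$ therefore solves \eqref{RPEdis2nd} with a residual $r=\mathcal O(\Delta t^4)$, which moreover has zero spatial mean thanks to mass conservation and the divergence structure of \eqref{RPEdis2nd}.

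The last step is an elliptic stability estimate for the error. Subtracting the equation solved by $\bar\phi^{n+\frac12}$ from the one (approximately) solved by $\phi^{n+\frac12}$ gives the periodic problem $-\partial_x\!\bigl[(\lambda^2-\tfrac{\Delta t^2}{24}\rho^{n+\frac12})\partial_x(\phi^{n+\frac12}-\bar\phi^{n+\frac12})\bigr]=r$. Provided the coefficient $a:=\lambda^2-\tfrac{\Delta t^2}{24}\rho^{n+\frac12}$ stays sign-definite with $|a|\gtrsim\max\{\lambda^2,\Delta t^2\}$ (the density being bounded and, in the near-quasi-neutral regime, close to $1$), the standard $H^2$-estimate for this one-dimensional elliptic operator yields $\|\phi^{n+\frac12}-\bar\phi^{n+\frac12}\|_{H^2}\le C|a|_{\min}^{-1}\|r\|_{L^2}$. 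For fixed $\lambda$ this is $\mathcal O(\Delta t^4)$; in the worst case $|a|_{\min}\sim\Delta t^2$ it is $\tfrac{C}{\Delta t^2}\mathcal O(\Delta t^4)=\mathcal O(\Delta t^2)$. By the Sobolev embedding $H^2\hookrightarrow C^1$ in one space dimension, both $\bar\phi^{n+\frac12}-\phi^{n+\frac12}$ and $\partial_x\bar\phi^{n+\frac12}-\partial_x\phi^{n+\frac12}$ are then $\mathcal O(\Delta t^2)$, as claimed.

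I expect two steps to be the main obstacles. One is the order-by-order cancellation, especially at $\mathcal O(\Delta t^3)$: the Taylor remainders must be tracked to the correct order and the second- and third-time-derivative consequences of the moment equations combined in exactly the right way, because an $\mathcal O(\Delta t^3)$ leftover would — after inversion of the $\mathcal O(\Delta t^2)$ elliptic operator in the quasi-neutral regime — degrade the estimate to first order. The other is the uniform-in-$\lambda$ invertibility of the operator in \eqref{RPEdis2nd}: its coefficient can be small or change sign near $\lambda^2\approx\tfrac{\Delta t^2}{24}$, so one needs a mild restriction on $\Delta t$ away from that window (or a separate argument there); sufficient smoothness of $f$, hence of its moments, and periodic boundary conditions are assumed throughout.
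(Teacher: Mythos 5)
Your proposal is correct and follows essentially the same route as the paper: a Taylor expansion about $t^{n+\frac{1}{2}}$ combined with the continuity, momentum and Poisson identities to show that the exact potential satisfies \eqref{RPEdis2nd} up to an $\mathcal{O}(\Delta t^4)$ residual (the paper organizes this as eliminating $\partial_{ttt}\rho^{n+\frac{1}{2}}$ from the expansions of $\rho^n$ and $\partial_t\rho^n$, which is the same computation read in the forward direction). Your final elliptic-stability step, including the observation that the coefficient $\lambda^2-\frac{\Delta t^2}{24}\rho^{n+\frac{1}{2}}$ may be as small as $\mathcal{O}(\Delta t^2)$ so that the residual is only converted into an $\mathcal{O}(\Delta t^2)$ error in $\phi$, is left implicit in the paper's closing sentence, so your write-up is in fact the more complete of the two.
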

\begin{proof}
Using Taylor expansions of $\rho^n$ and $\partial_t\rho^n$ about $t^{n+\frac{1}{2}}$,
we deduce 
\begin{align*}
    \rho^n & =\rho^{n+\frac{1}{2}}-\frac{\Delta t}{2} \partial_t \rho^{n+\frac{1}{2}}+\frac{\Delta t^2}{8} \partial_{t t} \rho^{n+\frac{1}{2}}-\frac{\Delta t^3}{48} \partial_{t t t} \rho^{n+\frac{1}{2}}+\frac{\Delta t^4}{384} \partial_{t t t t} \rho^{n+\frac{1}{2}}+\mathcal{O}\left(\Delta t^5\right), \\
    \partial_t \rho^n & =\partial_t \rho^{n+\frac{1}{2}}-\frac{\Delta t}{2} \partial_{t t} \rho^{n+\frac{1}{2}}+\frac{\Delta t^2}{8} \partial_{t t t} \rho^{n+\frac{1}{2}}-\frac{\Delta t^3}{48} \partial_{t t t t} \rho^{n+\frac{1}{2}}+\mathcal{O}\left(\Delta t^4\right).
\end{align*}
The elimination of $\partial_{ttt} \rho^{n+\frac{1}{2}}$ yields the second-order approximation
\begin{align}\label{2ndeq}
    \partial_{tt} \rho^{n+\frac{1}{2}}=\frac{-24 \rho^{n+\frac{1}{2}}+24 \rho^n+8 \Delta t \partial_t \rho^{n+\frac{1}{2}}+4 \Delta t \partial_t \rho^n}{\Delta t^2}+\frac{\Delta t^2}{192} \partial_{t t t t} \rho^{n+\frac{1}{2}}+\mathcal{O}\left(\Delta t^3\right) 
\end{align}
to equation \eqref{elmJ}. Replacing $\partial_t\rho^n$ with $-\partial_x(\rho u)^n$ in view of the continuity equation \eqref{continuity} and $\rho^{n+\frac{1}{2}}$ with $\lambda^2\partial_{xx}\phi^{n+\frac{1}{2}}+1$ in view of the Poisson equation \eqref{Poisson1dim}, we obtain
\begin{align*}
    -\partial_x\left[\left(\lambda^2-\frac{\Delta t^2}{24} \rho^{n+\frac{1}{2}}\right) \partial_x \phi^{n+\frac{1}{2}}\right] =\frac{\Delta t^2}{24} \partial_{x x} S^{n+\frac{1}{2}}  +\frac{\Delta t}{3} \partial_x(\rho u)^{n+\frac{1}{2}}+\frac{\Delta t}{6} \partial_x(\rho u)^n-\rho^n+1.
\end{align*}
The validity of the claim follows from this result.
\end{proof}

\subsection{CSLDG method}
\hspace{4pt}
Using a directional splitting procedure, the Vlasov equation is decomposed into a sequence of 1D transport problems associated with the $x$- and $v$-directions. We discretize these subproblems in space and time using the highly accurate and efficient CSLDG method \cite{QIU2011DG,Cai2017}. To introduce the semi-Lagrangian discretization procedure in a simple setting, we consider the 1D linear transport equation
\begin{align}\label{trans1D}
    \left\{
        \begin{aligned}
            &u_t+(a(x,t)u)_x=0,\quad x\in \Omega,\enspace t\in [0,T]\\
            &u(x,t=0)=u_0(x)
        \end{aligned}
    \right.
\end{align}
with periodic boundary conditions. We assume that $a(x,t)$ is continuous with respect to $x$ and $t$.

Let $\mathcal{E}_x: x_L=x_{\frac{1}{2}}<x_{\frac{3}{2}}<\cdots<x_{N_x-\frac{1}{2}}<x_{N_x+\frac{1}{2}}=x_R$ be a partition of the domain $\Omega=[x_L,x_R]$ into intervals $I_{j}:=\left[x_{j-\frac{1}{2}},x_{j+\frac{1}{2}}\right]~(j=1,2,\cdots,N_x)$ of length $\Delta x_j=x_{j+\frac{1}{2}}-x_{j-\frac{1}{2}}$. The midpoint of $I_{j}$ is denoted by $x_j=\frac{x_{j+\frac{1}{2}}+x_{j-\frac{1}{2}}}{2}$. We seek approximations in the  finite-dimensional space
$$
    V_h^k=\{v:v|_{I_j}\in P^k(I_{j})\},
$$
where $P^k(I_{j})$ is the space of polynomials of degree at most $k$ on the mesh element $I_{j}$.

The time-dependent test functions $\psi(x,t)$ of the CSLDG method proposed in \cite{Cai2017} satisfy 
\begin{align}\label{adjont}
    \left\{
        \begin{aligned}
            &\psi_t+a(x, t) \psi_x=0, \\
            &\psi\left(t=t^{n+1}\right)=\Psi,~t\in[t^n,t^{n+1}]
        \end{aligned}
    \right.
\end{align}
for any choice of $\Psi \in P^k(I_{j})$. As before,
 $t^n$ denotes the $n$th time level. We remark that the solution of the linear
1D problem \eqref{adjont} stays constant along the characteristics.
 
To derive a conservative Lagrangian weak form of \eqref{trans1D},
we use the Reynolds transport theorem
\begin{align}\label{SL}
    \frac{\mathrm d}{\mathrm dt}\int_{I_j(t)}u\psi\mathrm dx
    =\int_{I_j(t)}(u\psi)_t+(a(x,t)u\psi)_x\mathrm dx=0,
\end{align}
where $I_j(t)$ is the time-dependent interval delimited by the characteristics emanating from the endpoints $x_{j\pm\frac{1}{2}}$ of $I_j$ at $t=t^{n+1}$,
as shown in Fig. \ref{fig:trajectory1}. The endpoints of $
I_j^*:=I_j(t^n)$ are denoted by $x_{j\pm\frac{1}{2}}^*$.

The use of \eqref{SL} leads to a semi-Lagrangian DG method for calculating
$u^{n+1}\in V_h^k$ such that 
\begin{align}\label{SLscheme}
    \int_{I_j} u^{n+1} \Psi\mathrm d x=\int_{I_j^*} u^n \psi^n\mathrm d x
\qquad\forall \Psi  \in V_h^k.
\end{align}
\begin{figure}[h!]
    \centering    
    \begin{subfigure}[t]{0.48\linewidth}
        \centering
        \includegraphics[width=\linewidth]{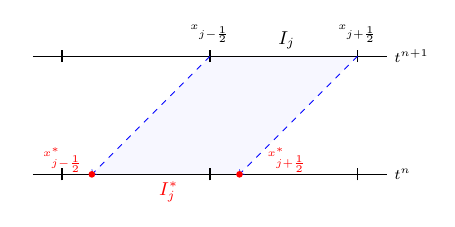}
        \caption{Characteristics and the upstream cell.}
        \label{fig:trajectory1}
    \end{subfigure}
    \hfill
    \begin{subfigure}[t]{0.48\linewidth}
        \centering
        \includegraphics[width=\linewidth]{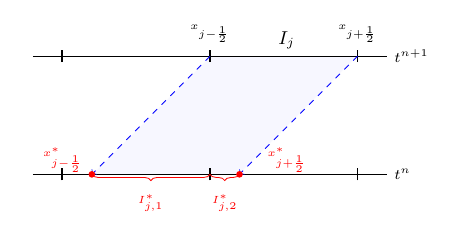}
        \caption{Subdivision of the upstream cell.}
        \label{fig:trajectory2}
    \end{subfigure}
    \caption{Schematic illustration of the CSLDG scheme in 1D.}
    \label{fig:trajectory}
\end{figure}
To advance the numerical solution in time from $t^n$ to 
$t^{n+1}$, we need to execute the following steps:
\begin{enumerate}
    \item Construct characteristics through the quadrature points $\{x_{j,q}\}_{q=1,2,\cdots,k+1}$ of a numerical integration rule for $I_j$ and
  locate their feet $\{x_{j,q}^*\}_{q=1,2,\cdots,k+1}$
  by solving the initial value problems
    $$
        \frac{\mathrm dx}{\mathrm dt}=a(x,t),\quad x(t^{n+1})=x_{j,q},\quad t\in[t^n,t^{n+1}].
    $$
    \item
    Approximate the restriction of $\psi(x,t^n)$ to the interval $I_j^*$
    by a polynomial $\psi^*(x)$ of degree $k$ fitted to
    the data $\{\left(x_{j,q}^*,\Psi(x_{j,q})\right)\}_{q=1,2,\cdots,k+1}$, i.e.,
    such that $\psi^*(x_{j,q}^*)=\Psi(x_{j,q})$.
    \item Detect intervals or subintervals $I_{j,l}^*$ that represent intersections 
    of $I_j^*=\cup_lI_j^*$ with elements $I_k$ of the fixed mesh (as shown in Fig.
    \ref{fig:trajectory2} for $l=1,2$ and $k=j-1,j$).
    \item Perform numerical integration to calculate
    (an approximation to) the right-hand side
    \begin{align}\label{rhsSL}
       \sum_l\int_{I_{j,l}^*}u^n\psi^*\mathrm dx\approx 
       \int_{I_j^*}u^n\psi^n\mathrm dx.
    \end{align}

    \item Use the basis functions of the polynomial space $P^k(I_{j})$ as test
    functions $\Psi$ in the discrete version of \eqref{SLscheme}
    and solve the resulting linear system for the local degrees of freedom.
    
    \item Repeat the above procedure for all elements $I_j$.
\end{enumerate}
\begin{lemma}
    The CSLDG scheme \eqref{SLscheme} is locally mass conservative in the sense that
    $$
        \int_{I_j}u^{n+1}\mathrm dx=\int_{I_j^*}u^n\mathrm dx.
    $$
    In the case of periodic boundary conditions, the global mass is also
    conserved, i.e.,
   $$
        \int_{\Omega}u^{n+1}\mathrm dx=\int_{\Omega}u^n\mathrm dx.
    $$
\end{lemma}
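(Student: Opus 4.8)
The plan is to exploit the defining weak form \eqref{SLscheme} with the single, carefully chosen test function $\Psi \equiv 1$. First I would note that, since $k \ge 0$, the constant function $1$ lies in $P^k(I_j)$ and hence in $V_h^k$, so it is an admissible test function. Next I would identify the associated adjoint solution: the final-value problem \eqref{adjont} with $\psi(t^{n+1}) = \Psi = 1$ has the constant $\psi \equiv 1$ as its solution, because constants trivially satisfy $\psi_t + a(x,t)\psi_x = 0$ and, as remarked after \eqref{adjont}, the solution is constant along characteristics. Therefore $\psi^n \equiv 1$ on the upstream cell $I_j^*$, and \eqref{SLscheme} collapses to $\int_{I_j} u^{n+1}\,\mathrm dx = \int_{I_j^*} u^n\,\mathrm dx$, which is exactly the local statement.

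For the global statement under periodic boundary conditions I would sum the local identity over $j = 1,\dots,N_x$. The left-hand side adds up to $\int_\Omega u^{n+1}\,\mathrm dx$ because $\{I_j\}$ partitions $\Omega$. For the right-hand side I need that the upstream cells $\{I_j^*\}_{j=1}^{N_x}$ tile $\Omega$ modulo the periodic identification of $x_L$ and $x_R$; equivalently, that the map sending each interface $x_{j+\frac12}$ to its characteristic foot $x_{j+\frac12}^*$ is an orientation-preserving bijection of the set of interfaces onto a periodic image of itself. Granting this, consecutive upstream cells share endpoints, their union covers $\Omega$ exactly once, and so $\sum_j \int_{I_j^*} u^n\,\mathrm dx = \int_\Omega u^n\,\mathrm dx$, giving $\int_\Omega u^{n+1}\,\mathrm dx = \int_\Omega u^n\,\mathrm dx$.

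The only genuine obstacle is this tiling/bijectivity claim: the mere continuity of $a$ assumed for \eqref{trans1D} guarantees existence but not uniqueness of characteristics, so strictly one invokes the standard additional hypothesis (e.g.\ Lipschitz continuity in $x$) under which the flow of $\frac{\mathrm dx}{\mathrm dt} = a(x,t)$ is a well-defined homeomorphism of $\Omega$; periodicity of $a$ then makes this flow commute with the periodic translation, so distinct characteristics do not cross and the feet $x_{j+\frac12}^*$ inherit the cyclic order of the $x_{j+\frac12}$. Once this is in place the remainder of the argument is purely set-theoretic. I would also remark that the identity survives verbatim at the fully discrete level of step~4 (eq.\ \eqref{rhsSL}): with $\Psi \equiv 1$ the fitted polynomial $\psi^*$ is again the constant $1$ and $u^n$ restricted to each piece $I_{j,l}^*$ is a polynomial of degree at most $k$, so any quadrature that is exact for degree $k$ introduces no error into the mass balance.
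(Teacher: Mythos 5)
Your proposal is correct and follows essentially the same route as the paper: take $\Psi\equiv 1$ (whose adjoint solution is the constant $1$) to collapse \eqref{SLscheme} to the local identity, then sum over $j$ using periodicity. The paper states this in two sentences; your additional care about the upstream cells tiling $\Omega$ and the well-posedness of the characteristic flow is a legitimate filling-in of details the paper leaves implicit, not a different argument.
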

\begin{proof}
Using $\Psi=1$ in \eqref{SLscheme}, we find that the statement regrading local
mass conservation is true. Summation over all elements $I_j$ proves the
global conservation property in the periodic case.
\end{proof}

\subsection{Calculation of the electrostatic potential}\label{sec:phi}
\hspace{4pt}
To linearize the subproblem $H_{\mathbf{E}}$ of the dimension-splitting method, we need to freeze the electrostatic potential $\phi$ corresponding to a numerical solution of the semi-discrete RPE.
Since $\phi(x,t)$ is periodic in the physical space, the Fourier spectral method is a natural choice for the spatial discretization of \eqref{RPEdis1st} or \eqref{RPEdis2nd}. This way to obtain $\phi^{n+1}$ or $\phi^{n+\frac{1}{2}}$  not only provides high accuracy in space but can also significantly reduce the computational time by employing the fast Fourier transformation (FFT).

For numerical integration purposes, the frozen coefficients $\partial_x\phi^{n+1}$ or $\partial_x\phi^{n+\frac{1}{2}}$ need to be evaluated at the Gaussian quadrature points $x_{j,q}$ of the element $I_j$ \cite{QIU2011DG}. However, the approximations produced by the Fourier spectral method for the semi-discretization of RPE \eqref{RPEdis1st} or \eqref{RPEdis2nd} are defined only at the cell midpoints $\{x_j\}$ of the uniform mesh. Therefore, it is necessary to employ certain reconstruction techniques to recover the values at the Gaussian points. 

The procedure for obtaining $\partial_x\phi^{n+1}$ for the Lie splitting method is as follows. We first evaluate the density $\rho^n$, momentum $(\rho u)^n$ and momentum tensor $S^n$ at $\{x_j\}$ by taking moments of the numerical solution $f_h^n$ to the subproblem $H_{\mathbf{E}}$. Then, we use the Fourier spectral discretization of the RPE and the FFT algorithm to calculate the values of $\partial_x\phi^{n+1}$ at $\{x_j\}$. Finally, we use interpolation polynomials of degree $k$ to reconstruct $\partial_x\phi^{n+1}(x)$ and calculate the values at $\{x_{j,q}\}$.

A similar algorithm is used to obtain $\partial_x\phi^{n+\frac{1}{2}}$ for the Strang splitting method. The only difference is that we need to evaluate the density $\rho^{n+\frac{1}{2}}$, current density $J^{n+\frac{1}{2}}$ and momentum tensor $S^{n+\frac{1}{2}}$ at $\{x_j\}$ using the numerical solution $f^{*}$ of the linear subproblem $H_{\mathbf{f}}$ after the first halved time step.


\subsection{Positivity-preserving limiter}
\hspace{4pt}
Since the distribution function $f(x,v,t)$ that represents the exact
solution of the VP subproblem $H_{\mathbf{f}}$ is non-negative, we should
ensure positivity preservation for our numerical solution $f_h^n(x,v)$. 
A simple scaling limiter that guarantees preservation of local and/or global
bounds in the context of Runge-Kutta discontinuous Galerkin (RKDG)
methods for hyperbolic problems was presented in \cite{PP-36,PP-37}.
Under the inductive assumption that the cell averages $\bar{f}_j=\frac{1}{\Delta x_j}\int_{I_j} f_h(x)\mathbf dx$ are maximum principle preserving (MPP)
and/or positivity preserving (PP), the corresponding constraints for
pointwise values are enforced by limiting the deviation of $f_h|_{I_j}$
from the average value
$\bar{f}_j$. In essence, the numerical solution $f_h(x)$ is
replaced by a slope-limited approximation  $\tilde{f}_h(x)$ that
satisfies the following conditions:
\begin{enumerate}
    \item Accuracy: For smooth solutions, $\Vert f_h-\tilde{f}_h\Vert_{\infty}=\mathcal{O}(\Delta x^{k+1})$, where $k$ is the degree of the polynomial approximation used in the DG method.
    \item Conservation: $\int_{I_j} \tilde{f}_h(x)\mathrm dx=\int_{I_j} f_h(x)\mathrm dx$.
    \item Positivity: $\tilde{f}_h(x)\geq 0$ for all $x\in \Omega_x$.
\end{enumerate}
In our asymptotic-preserving conservative semi-Lagrangian discontinuous Galerkin (AP-CSLDG) scheme, we apply the PP limiter to $f_h(x)$ after each time step:
\begin{align*}
    \tilde{f}_h(x)=\theta_j\left(f_h(x)-\bar{f}_j\right)+\bar{f},\quad \theta_j=\min\left\{\left\vert \frac{m_0-\bar{f}}{m_j'-\bar{f}}\right\vert,1\right\},
    \qquad x\in I_j,
\end{align*}
where $m_0=10^{-15}$ and $m_j'$ represents the minimum value of $f_h(x)$ within the element $I_j$. This limiting strategy has been rigorously proven to preserve positivity
\cite{PP-35,PP-37} without losing the local conservation property or degrading
the accuracy of a high-order DG approximation in smooth regions.

\subsection{Fractional step algorithms}\label{sec:algorithm}
\hspace{4pt}
In this subsection, we describe our implementation of
the fractional step algorithm 
corresponding to the Lie splitting
version of our AP-CSLDG scheme for the RVP system.
The implementation
of the Strang splitting version is similar.
Additional details and explanations can be found in \cite{QIU2011DG}.
The asymptotic-preserving properties of our scheme are analyzed in the next section.

\subsubsection{Tensor-product DG discretization}
\hspace{4pt}
Consider the 1D1V computational domain $\Omega=\Omega_x\times\Omega_v$ and uniform partitions $\mathcal{E}_x$ and $\mathcal{E}_v$ of the domains $\Omega_x=[x_L,x_R]$ and $\Omega_v=[v_{\text{min}},v_{\text{max}}]$ into $N_x$ and $N_v$ cells, respectively. The two-dimensional phase domain $\Omega$ is discretized using the tensor product $\mathcal{E}=\mathcal{E}_x\otimes \mathcal{E}_v$ of the uniform 1D meshes $\mathcal{E}_x=\{I_{x_j}\}_{j=1}^{N_x}$ and $ \mathcal{E}_v=\{I_{v_i}\}_{i=1}^{N_v}$ for the physical domain $\Omega_x$ and the velocity domain 
$\Omega_v$. The so-defined regular partition $\mathcal{E}$ of 
$\Omega$ consists of rectangular cells $I_{x_j}\otimes I_{v_i}$. The
discontinuous
finite element approximation space $V_h^k(\mathcal{E})=\mathcal{D}_k(\mathcal{E}_x)\otimes \mathcal{D}_k(\mathcal{E}_v)$ is defined as the tensor product of the
spaces
\begin{align*}
    &\mathcal{D}_k(\mathcal{E}_x) = \{f_h:f_h|_{I_{x_j}}\in P^k(I_{x_j}),~j=1,2,\cdots,N_x\},\\
    &\mathcal{D}_k(\mathcal{E}_v) = \{f_h:f_h|_{I_{v_i}}\in P^k(I_{v_i}),~i=1,2,\cdots,N_v\}. 
\end{align*}
Therefore, there are $(k+1)^2$ degrees of freedom in each element.

\subsubsection{Numerical approximation of the operator $\mathcal{S}_{\mathbf{f}}$}
\hspace{4pt}
To solve the linear problem $H_{\mathbf{f}}$, we first fix $k+1$ Gaussian points
 $\{v_{i,p}\}_{p=1,2,\cdots,k+1}$ in each element $I_{v_i}$ of the 1D mesh $\mathcal E_v$. 
Then for each discrete velocity $v_{i,p}$, we solve the $x$-direction transport equation
\begin{align*}
    \partial_tf+v_{i,p}\partial_xf=0
\end{align*}
using the conservative SLDG scheme \eqref{SLscheme} to
find $f_h^{*}(x,v_{i,p})\in \mathcal{D}_k(\mathcal{E}_x)$ such that
\begin{align}\label{Hhf}
    \int_{I_{x_j}} f_h^{*}(x,v_{i,p}) \Psi(x)\mathrm d x
    =\int_{I_{x_j}^*} f_h^n(x,v_{i,p}) \psi^n(x)\mathrm d x
\end{align}
for any $\Psi(x) \in \mathcal{D}_k(\mathcal{E}_x)$. The interval
$I_{x_j}^*=\left[x_{j-\frac{1}{2}}^*,x_{j+\frac{1}{2}}^*\right]$ is
delimited by the feet $x_{j\pm \frac{1}{2}}^*=x(t^n)$ of the two
characteristics $x(t)$ that are constructed by solving the initial value problems
\begin{align*}
    \frac{\mathrm dx}{\mathrm dt}=v_{i,p},\quad x(t^{n+1})=x_{j\pm \frac{1}{2}},\quad t\in[t^n,t^{n+1}].
\end{align*}
At the end of the above CSLDG update, we enforce positivity preservation
for the values of $f_h^{*}(x,v)$ at the Gaussian points $(x_{j,q},v_{i,p})$
of each rectangular element $I_{x_j}\times I_{v_i}$ using the PP scaling
limiter. The slope-limited approximation to $f_h^{*}(x,v)$
provides an admissible initial condition for the next step.

\subsubsection{Numerical approximation of the operator $\mathcal{S}_{\mathbf{E}}$}
\hspace{4pt}
To solve the nonlinear problem $H_{\mathbf{E}}$, we first calculate the frozen coefficients $\partial_x\phi^{n+1}$ at the Gaussian points $\{x_{j,q}\}$ using the procedure introduced in Section \ref{sec:phi}. 
Then for each discrete physical location $x_{j,q}$, we solve the $v$-direction transport equation
\begin{align*}
    \partial_tf+\partial_x\phi^{n+1}(x_{j,q})\partial_vf=0
\end{align*}
using the conservative SLDG scheme \eqref{SLscheme} to find $f_h^{n+1}(x_{j,q},v)\in \mathcal{D}_k(\mathcal{E}_v)$ such that
\begin{align}\label{HhE}
    \int_{I_{v_i}} f_h^{n+1}(x_{j,q},v) \Psi(v)\mathrm d v=\int_{I_{v_i}^*} f_h^{*}(x_{j,q},v) \psi^n(v)\mathrm d v
\end{align}
for any $\Psi(v) \in \mathcal{D}_k(\mathcal{E}_v)$. The interval $I_{v_i}^*=\left[v_{i-\frac{1}{2}}^*,v_{i+\frac{1}{2}}^*\right]$ is delimited by the feet $v_{i\pm \frac{1}{2}}^*=v(t^n)$ of the two characteristics $v(t)$ that are constructed by solving the initial value problems
\begin{align*}
    \frac{\mathrm dv}{\mathrm dt}=\partial_x\phi^{n+1}(x_{j,q}),\quad v(t^{n+1})=v_{i\pm \frac{1}{2}},\quad t\in[t^n,t^{n+1}].
\end{align*}
The application of the PP scaling limiter to the updated functions
$f_h^{n+1}(x,v)$ ensures positivity preservation at the Gaussian
points $(x_{j,q},v_{i,p})$ of each rectangular element $I_{x_j}\times I_{v_i}$. 

\subsubsection{Structure of the Lie and Strang splitting schemes}
\hspace{4pt}
Let $\mathcal{S}^{h}_{\mathbf{f}}$ and $\mathcal{S}^{h}_{\mathbf{E}}$ denote the numerical counterparts of the evolution operators $\mathcal{S}_{\mathbf{f}}$ and $\mathcal{S}_{\mathbf{E}}$, respectively. The AP-CSLDG scheme based on the Lie splitting method consists of the two steps
\begin{align*}
    &f_h^{*}(x,v)=\mathcal{S}^{h}_{\mathbf{f}}(\Delta t)f_h^n(x,v),\\
    &f_h^{n+1}(x,v)=\mathcal{S}^{h}_{\mathbf{E}}(\Delta t)f_h^{*}(x,v).
\end{align*}
This two-step algorithm is referred to as AP-CSLDG-1 in what follows.

The AP-CSLDG scheme based on the Strang splitting method
consists of the three steps
\begin{align*}
    &f_h^{*}(x,v)=\mathcal{S}^{h}_{\mathbf{f}}(\frac{\Delta t}{2})f_h^n(x,v),\\
    &f_h^{**}(x,v)=\mathcal{S}^{h}_{\mathbf{E}}(\Delta t)f_h^{*}(x,v),\\
    &f_h^{n+1}(x,v)=\mathcal{S}^{h}_{\mathbf{f}}(\frac{\Delta t}{2})f_h^{**}(x,v)
\end{align*}
and is denoted by AP-CSLDG-2.

It is easy to verify that our AP-CSLDG schemes for the RVP system
preserve the total number of particles
$\int_{\Omega_x}\rho(x)\mathrm dx=
\int_{\Omega_v}\int_{\Omega_x} f_h(x)\mathrm d x\mathrm d v$.
In the following theorem, we show that this is indeed the case
for the AP-CSLDG-1 version. The proof for the AP-CSLDG-2 scheme is similar.
\begin{theorem}[Conservation of the particle load] The AP-CSLDG-1 scheme preserves the global mass
    \begin{align*}
        \int_{\Omega_v}\int_{\Omega_x} f_h^{n+1}(x,v)\mathrm d x\mathrm d v=\int_{\Omega_v}\int_{\Omega_x} f_h^n(x,v)\mathrm d x\mathrm d v.
    \end{align*}
\end{theorem}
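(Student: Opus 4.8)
The plan is to push the phase-space integral $\int_{\Omega_v}\int_{\Omega_x} f_h\,\mathrm dx\,\mathrm dv$ through the two substeps $\mathcal{S}^h_{\mathbf f}(\Delta t)$ and $\mathcal{S}^h_{\mathbf E}(\Delta t)$ separately, showing that each substep leaves it unchanged, and then to chain the two equalities. The structural fact that makes the argument run is that $f_h^n$, $f_h^*$ and $f_h^{n+1}$ all belong to the tensor-product space $V_h^k(\mathcal E)$, so on each cell they are polynomials of degree at most $k$ in $x$ and in $v$; consequently the $(k+1)$-point Gauss rule that defines the nodes $\{v_{i,p}\}$ (and $\{x_{j,q}\}$) is exact on the relevant one-dimensional subcell. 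The first thing I would record is the resulting quadrature identity: for every $g\in V_h^k(\mathcal E)$,
\[
  \int_{\Omega_v}\!\int_{\Omega_x} g\,\mathrm dx\,\mathrm dv
  =\sum_{i,p} w^v_{i,p}\int_{\Omega_x} g(x,v_{i,p})\,\mathrm dx
  =\sum_{j,q} w^x_{j,q}\int_{\Omega_v} g(x_{j,q},v)\,\mathrm dv ,
\]
where $w^v_{i,p}$ and $w^x_{j,q}$ are the Gauss weights; this follows from Fubini together with exactness of the 1D Gauss rule applied, on each subcell, to the degree-$k$ polynomials $v\mapsto\int_{\Omega_x}g(x,v)\,\mathrm dx$ and $x\mapsto\int_{\Omega_v}g(x_{j,q},v)\,\mathrm dv$.

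For the substep $\mathcal{S}^h_{\mathbf f}$, I would note that for each fixed discrete velocity $v_{i,p}$ the $x$-sweep \eqref{Hhf} is precisely the CSLDG scheme \eqref{SLscheme} for the 1D transport equation with constant speed $v_{i,p}$; the global part of the CSLDG mass-conservation lemma then gives $\int_{\Omega_x}\hat f_h^*(x,v_{i,p})\,\mathrm dx=\int_{\Omega_x}f_h^n(x,v_{i,p})\,\mathrm dx$, where $\hat f_h^*$ denotes the output before the positivity limiter is applied. Multiplying by $w^v_{i,p}$, summing over $i,p$, and invoking the quadrature identity for $\hat f_h^*$ and for $f_h^n$ yields $\int_{\Omega_v}\int_{\Omega_x}\hat f_h^*\,\mathrm dx\,\mathrm dv=\int_{\Omega_v}\int_{\Omega_x}f_h^n\,\mathrm dx\,\mathrm dv$. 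Applying the PP scaling limiter does not change this, since by its conservation property the cell mean is preserved on every cell; summing the cellwise identities gives $\int_{\Omega_v}\int_{\Omega_x}f_h^*\,\mathrm dx\,\mathrm dv=\int_{\Omega_v}\int_{\Omega_x}\hat f_h^*\,\mathrm dx\,\mathrm dv$.

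The substep $\mathcal{S}^h_{\mathbf E}$ is handled by the same argument with the roles of $x$ and $v$ interchanged: for each Gaussian point $x_{j,q}$ the $v$-sweep \eqref{HhE} is the CSLDG scheme for 1D transport with the frozen constant speed $\partial_x\phi^{n+1}(x_{j,q})$, so the mass-conservation lemma gives $\int_{\Omega_v} \hat f_h^{n+1}(x_{j,q},v)\,\mathrm dv=\int_{\Omega_v} f_h^{*}(x_{j,q},v)\,\mathrm dv$ for the output $\hat f_h^{n+1}$ before limiting; multiplying by $w^x_{j,q}$, summing, applying the second equality of the quadrature identity to $\hat f_h^{n+1}$ and to $f_h^{*}$, and then invoking the conservation property of the limiter once more, we obtain $\int_{\Omega_v}\int_{\Omega_x}f_h^{n+1}\,\mathrm dx\,\mathrm dv=\int_{\Omega_v}\int_{\Omega_x}f_h^{*}\,\mathrm dx\,\mathrm dv$. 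Chaining the two equalities proves the claim. The only place where genuine care is required is the quadrature identity: one has to use that the transverse reconstruction is carried out exactly at the $k+1$ Gauss nodes, so that the merely slicewise conservation delivered by the 1D sweeps and by the limiter actually upgrades to conservation of the true phase-space integral; the rest is bookkeeping with the CSLDG mass-conservation lemma and the limiter's conservation property.
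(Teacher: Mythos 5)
Your proposal is correct and follows essentially the same route as the paper's proof: slicewise mass conservation of the 1D CSLDG sweeps at the Gauss nodes, upgraded to conservation of the phase-space integral via exactness of the $(k+1)$-point Gauss rule on the tensor-product polynomial space, applied first in $v$ and then in $x$. The only differences are that you make the quadrature-exactness step explicit and additionally account for the positivity-preserving limiter (whose cell-mean conservation the paper's proof tacitly assumes), both of which are welcome refinements rather than deviations.
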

\begin{proof}
   The numerical operator $\mathcal{S}_{\mathbf{f}}^h$
   of the SLDG scheme \eqref{Hhf} has the mass conservation property
    \begin{align*}
        \int_{\Omega_x} f_h^{*}(x,v_{i,p})\mathrm d x=\int_{\Omega_x} f_h^n(x,v_{i,p})\mathrm d x.
    \end{align*}
Using the Gaussian quadrature with $k+1$ points for integration
over $\Omega_v$, we obtain
    \begin{align*}
        \int_{\Omega_v}\int_{\Omega_x} f_h^{*}(x,v)\mathrm d x\mathrm d v &=\sum_{i=1}^{N_v}\sum_{p=1}^{k+1} w_{i,p} \int_{\Omega_x} f_h^{*}(x,v_{i,p})\mathrm d x\\
        &=\sum_{i=1}^{N_v}\sum_{p=1}^{k+1} w_{i,p} \int_{\Omega_x} f_h^{n}(x,v_{i,p})\mathrm d x
        =\int_{\Omega_v}\int_{\Omega_x} f_h^{n}(x,v)\mathrm d x\mathrm d v.
    \end{align*}
    Similarly, the design of
    the numerical operator $\mathcal{S}_{\mathbf{E}}^h$ for
    the SLDG scheme \eqref{HhE} ensures that
    \begin{align*}
        \int_{\Omega_x}\int_{\Omega_v} f_h^{n+1}(x,v)\mathrm d v\mathrm d x=\int_{\Omega_x}\int_{\Omega_v} f_h^{*}(x,v)\mathrm d v\mathrm d x.
    \end{align*}
    Combining the above integral conservation laws for $\mathcal{S}_{\mathbf{f}}^h$
    and $\mathcal{S}_{\mathbf{E}}^h$
    proves the validity of the claim.
\end{proof}

\section{Analysis of the AP-CSLDG schemes}

\hspace{4pt}
A good numerical method for plasma flow simulations should provide
continuous dependence on the dimensionless Debye length $\lambda$ and ensure
correct asymptotic behavior in the quasi-neutral regime, in which $\lambda\to 0$.  
This requirement motivated the development of asymptotic-preserving (AP) numerical
schemes based on PIC \cite{DEGOND2006, DEGOND2010}, SL \cite{DEGOND2009}, DUGKS \cite{LIU2020} and CSL \cite{LIU2025} approaches.

In this section, we recall the definitions
of strong and weak AP properties for the RVP system \cite{Crou2016}. Introducing
some additional notation, we perform theoretical analysis that proves the
asymptotic consistency and stability of the proposed AP-CSLDG schemes
in the quasi-neutral limit.


\subsection{Notation and norms}

Adopting the notation of Section 2, we denote the  RVP system \eqref{RVP} by $\text{RP}^{\lambda}$ and its  quasi-neutral limit \eqref{VP1-dim-ql} by $\text{RP}^{0}$. The discrete 
problems resulting from the AP-CSLDG discretization \eqref{sec:algorithm} of these systems are denoted by
$\text{RP}^{\lambda}_{\delta}$ and $\text{RP}^0_{\delta}$, respectively. The resolution
parameter $\delta=\delta (\Delta x,\Delta v,\Delta t)$ indicates the dependence on the
mesh size $\Delta x$, velocity increment $\Delta v$ and time step $\Delta t$.

The standard $L^2$-norm is defined by
$$
    \Vert f \Vert = \Vert f \Vert_{L^2(\Omega)}
    =\left(\int_{\Omega_x}\int_{\Omega_v}\vert f(x,v)\vert ^2\mathrm dv\mathrm dx\right)^{\frac{1}{2}}
$$
for (finite element approximations to) distribution functions $f(x,v)$ and by
$$
    \Vert \rho \Vert = \Vert \rho \Vert_{L^2(\Omega_x)}=\left(\int_{\Omega_x}\vert \rho(x) \vert ^2\mathrm dx\right)^{\frac{1}{2}}
$$
for functions $\rho(x)$ of the space variable $x$, such as moments of $f(x,v)$.

\subsection{Definition of the AP properties}
The first AP schemes for kinetic plasma models were proposed by Jin \cite{Jin1999}. Further development of such schemes for various applications has been actively pursued in the literature \cite{DEGOND2006,DEGOND2009,DEGOND2010,Crou2016,LIU2025}. A scheme is commonly called AP if it is asymptotically stable and consistent in a limit that changes the structure of the original system. In the context of the RVP system, the following AP properties are relevant \cite{Crou2016}. 
\begin{definition}[Strong AP property]
A consistent and stable discretization {\rm $\text{RP}_\delta^{\lambda}$}
of the system {\rm $\text{RP}^{\lambda}$} is
{\rm asymptotic preserving (AP)} in the quasi-neutral limit if it is stable uniformly with respect to $\lambda$ and becomes a consistent discretization of the reduced system {\rm $\text{RP}^{0}$} in the limit $\lambda \rightarrow0$, at least for well prepared initial data. The AP property is said to be strong if it is guaranteed for any initial condition.
\end{definition}
\begin{definition}[Weak AP property] A numerical scheme that is guaranteed to be AP only for initial data consistent with the limit system is said to possess the weak AP property.
\end{definition}
The above AP properties are illustrated and summarized in Fig. \ref{fig:APproperty}.
\begin{figure}[h!]
    \centering
    \includegraphics[width=0.5\textwidth]{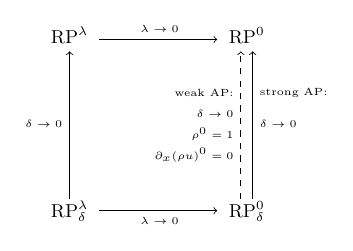} 
    \caption{Illustration of the AP properties.}
    \label{fig:APproperty}
\end{figure}
\subsection{Consistency of the AP-CSLDG schemes for the quasi-neutral RVP system}
To verify the compatibility of our AP-CSLDG schemes with the quasi-neutral limit \eqref{VP1-dim-ql} of the RVP system, we need to analyze the consistency error of the
fractional step time discretization.

\begin{theorem}\label{theorem:consistency}
Let the distribution function $f(x,v,t)$ be discretized w.r.t. the time variable $t$, while keeping the phase space variables $x$ and $v$ continuous. Evolve $f^n(x,v)$ using the Lie splitting 
\begin{align}\label{SLsemischeme}
    &f^*(x,v)=f^n(x-v\Delta t,v),\notag\\
    &f^{n+1}(x,v)=f^*(x,v-\partial_x\phi\Delta t).
\end{align}
Assume that the electric potential $\phi$ 
satisfies the quasi-neutral equation
\begin{align*}
    \partial_x(\rho^*\partial_x\phi)=\partial_{xx}S^n
\end{align*}
and that the initial conditions are consistent with the quasi-neutral limit, i.e., $\rho^0=1$ and $\partial_x(\rho u)^0=0$.

Then, for any finite time $T=N\Delta t$, the consistency error of the
time discretization satisfies
\begin{align*}
    &\Vert \rho^N - 1\Vert \leq (C_1+C_2)~\Delta t,\\
    &\Vert \partial_x(\rho u)^N\Vert \leq C_2~\Delta t,
\end{align*}
where $C_1$ and $C_2$ are independent of $\Delta t$.
\end{theorem}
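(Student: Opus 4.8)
The plan is to follow the two moment ``defects'' $a^n := \rho^n - 1$ and $b^n := \partial_x(\rho u)^n$ through the two substeps of \eqref{SLsemischeme}, derive a closed recursion for them, and bound their $L^2$-norms by a discrete Gr\"onwall-type argument over $N = T/\Delta t$ steps.

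First I would work out exactly how the velocity moments transform in each substep. The transport step $f^*(x,v) = f^n(x - v\Delta t, v)$ is the exact flow of $\partial_t f + v\,\partial_x f = 0$; taking the zeroth and first moments and Taylor-expanding (assuming enough smoothness of $f^n$ and decay in $v$) gives
\begin{align*}
    \rho^* &= \rho^n - \Delta t\,\partial_x(\rho u)^n + \mathcal{O}(\Delta t^2), \\
    \partial_x(\rho u)^* &= \partial_x(\rho u)^n - \Delta t\,\partial_{xx}S^n + \mathcal{O}(\Delta t^2),
\end{align*}
where the remainders are controlled in $\|\cdot\|$ by velocity moments of spatial derivatives of $f^n$. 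For the force step $f^{n+1}(x,v) = f^*(x, v - \partial_x\phi(x)\Delta t)$, the substitution $w = v - \partial_x\phi(x)\Delta t$ is licit because $\partial_x\phi$ does not depend on $v$, and it yields the \emph{exact} identities $\rho^{n+1} = \rho^*$ and $(\rho u)^{n+1} = (\rho u)^* + \Delta t\,\rho^*\,\partial_x\phi$, hence $\partial_x(\rho u)^{n+1} = \partial_x(\rho u)^* + \Delta t\,\partial_x(\rho^*\partial_x\phi)$.

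The decisive step is to invoke the quasi-neutral potential equation $\partial_x(\rho^*\partial_x\phi) = \partial_{xx}S^n$: inserting it into the momentum identity cancels the $\Delta t\,\partial_{xx}S^n$ produced by the transport step and leaves $b^{n+1} = b^n + \mathcal{O}(\Delta t^2)$, while the density obeys $a^{n+1} = a^n - \Delta t\,b^n + \mathcal{O}(\Delta t^2)$. Starting from the well prepared data $a^0 = b^0 = 0$, telescoping the first recursion over $n = 0,\dots,N-1$ accumulates $N\cdot\mathcal{O}(\Delta t^2) = \mathcal{O}(\Delta t)$, i.e.\ $\|b^n\| \le C_2\Delta t$ uniformly in $n\le N$. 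Feeding this bound into the density recursion and summing again gives $\|a^N\| \le C_1\Delta t + C_2\Delta t$, where $C_1$ collects the continuity-equation truncation and $C_2$ is inherited from the accumulated momentum defect; both constants depend only on $T$ and on a priori bounds for $f$, not on $\Delta t$.

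The hard part will be making the $\mathcal{O}(\Delta t^2)$ remainders uniform in $n$: they involve velocity moments of $\partial_{xx}f^n$ and $\partial_{xxx}f^n$ as well as $S^n$ (which enters through $\partial_x\phi$), so one needs an a priori estimate on the time-discrete solution and enough of its derivatives on $[0,T]$ that does not degrade with $n$. One also has to verify that $\rho^* = 1 + \mathcal{O}(\Delta t)$ stays bounded away from zero, so that the elliptic equation defining $\phi$ is uniformly solvable and $\rho^*\partial_x\phi$ is controlled by $\|S^n\|$; this holds once $\Delta t$ is small enough, and it is exactly what allows the induction on the pair $(a^n, b^n)$ to close.
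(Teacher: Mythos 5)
Your proposal is correct and follows essentially the same route as the paper: Taylor-expand the two substeps, pass to the zeroth and first moments, use the quasi-neutral equation $\partial_x(\rho^*\partial_x\phi)=\partial_{xx}S^n$ to cancel the $\Delta t\,\partial_{xx}S^n$ contribution in the momentum recursion, and telescope the resulting $\mathcal{O}(\Delta t^2)$ per-step defects over $N=T/\Delta t$ steps starting from well-prepared data. The only cosmetic difference is that you treat the force step exactly via the substitution $w=v-\partial_x\phi\,\Delta t$, whereas the paper Taylor-expands it and carries explicit Lagrange remainders; both yield the same recursions and constants.
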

\begin{proof}
Performing Taylor expansions for the two steps 
of the Lie splitting scheme \eqref{SLsemischeme}, we obtain
\begin{align*}
        f^*(x,v)&=f^n(x-v~\Delta t,v)=f^n(x,v)-v\Delta t\partial_xf^n(x,v)+\frac{v^2\Delta t^2}{2}\partial_{xx}f^n(\xi,v),\\
        f^{n+1}(x,v)&=f^*(x,v-\partial_x\phi~\Delta t)=f^*(x,v)-\Delta t\partial_x\phi~\partial_vf^*(x,v)+\frac{(\partial_x\phi)^2\Delta t^2}{2}\partial_{vv}f^*(x,\eta).
\end{align*}
Without loss of generality, we assume that $\xi\in(x-v\Delta t,x)$ and $\eta\in(v-\partial_x\phi~\Delta t,v)$. Integration over the velocity space $\Omega_v$ yields the densities
\begin{align*}
    \rho^*(x)&=\rho^n(x)-\Delta t\partial_x(\rho u)^n(x)+\frac{\Delta t^2}{2}\int_{\Omega_v}v^2\partial_{xx}f^n(\xi,v)\mathrm dv,\\
    \rho^{n+1}(x)&=\rho^*(x)+\frac{\Delta t^2}{2}(\partial_x\phi)^2\int_{\Omega_v}\partial_{vv}f^*(x,\eta)\mathrm dv.
\end{align*}
Next, we multiply the Taylor expansions of
$f^*(x,v)$ and $f^{n+1}(x,v)$ by $v$, integrate
over the velocity space $\Omega_v$ and differentiate w.r.t. $x$. These
manipulations yield the momentum gradients
\begin{align*}
    \partial_x(\rho u)^*(x)&=\partial_x(\rho u)^n(x)-\Delta t\partial_{xx}S^n(x)+\frac{\Delta t^2}{2}\partial_x\int_{\Omega_v}v^3\partial_{xx}f^n(\xi,v)\mathrm dv,\\
    \partial_x(\rho u)^{n+1}(x)&=\partial_x(\rho u)^*(x)+\Delta t\partial_x\left(\rho^*(x)\partial_x\phi\right)+\frac{\Delta t^2}{2}\partial_x\left[(\partial_x\phi)^2\int_{\Omega_v}v\partial_{vv}f^*(x,\eta)\mathrm dv\right].
\end{align*}
Invoking the quasi-neutral equation, we write the gradient of $(\rho u)^{n+1}(x)$
in the form
\begin{align*}
     \partial_x(\rho u)^{n+1}=\partial_x(\rho u)^n+\frac{\Delta t^2}{2}\partial_x\int_{\Omega_v}v^3\partial_{xx}f^n(\xi,v)\mathrm dv+\frac{\Delta t^2}{2}\partial_x\left[(\partial_x\phi)^2\int_{\Omega_v}v\partial_{vv}f^*(x,\eta)\mathrm dv\right].
\end{align*}
It follows that
\begin{align*}
    \Vert \partial_x(\rho u)^{n+1}\Vert\leq\Vert\partial_x(\rho u)^n\Vert+c_n\Delta t^2,
\end{align*}
where $c_n$ is independent of $\Delta t$. Using the initial condition $\partial_x(\rho u)^0=0$, we easily obtain
\begin{align*}
    \Vert \partial_x(\rho u)^N\Vert&\leq\Vert\partial_x(\rho u)^{N-1}\Vert+c_{N-1}\Delta t^2\\
    &\leq\Vert\partial_x(\rho u)^{N-2}\Vert+(c_{N-2}+c_{N-1})\Delta t^2\\
    &\leq\cdots\\
    &\leq\Vert\partial_x(\rho u)^0\Vert+(c_0+\cdots+c_{N-1})\Delta t^2\\
    &\leq C_2\Delta t,
\end{align*}
where $C_2=\max\{c_0,\cdots,c_{N-1}\}$. We have
\begin{align*}
    \rho^{n+1}=\rho^n-\Delta t\partial_x(\rho u)^n+\frac{\Delta t^2}{2}\int_{\Omega_v}v^2\partial_{xx}f^n(\xi,v)\mathrm dv+\frac{\Delta t^2}{2}(\partial_x\phi)^2\int_{\Omega_v}\partial_{vv}f^*(x,\eta)\mathrm dv.
\end{align*}
Therefore,
\begin{align*}
    \Vert\rho^{n+1}-\rho^n\Vert\leq\Delta t\Vert\partial_x(\rho u)^n\Vert+\bar{c}_{n}\Delta t^2,
\end{align*}
where $\bar{c}_n$ is independent of $\Delta t$. Finally, we use
the initial condition $\rho^0=1$ to show that
\begin{align*}
    \Vert \rho^N-1\Vert\leq& \Vert\rho^N-\rho^{N-1}\Vert+\Vert \rho^{N-1}-\rho^{N-2}\Vert+\cdots+\Vert\rho^1-\rho^0\Vert\\
    \leq& \Delta t\left(\Vert\partial_x(\rho u)^{N-1}\Vert+\Vert\partial_x(\rho u)^{N-2}\Vert+\cdots+\Vert\partial_x(\rho u)^0\Vert\right)\\
    &+\left(\bar{c}_{N-1}+\bar{c}_{N-2}+\cdots+\bar{c}_0\right)\Delta t^2\\
    \leq& \Delta t\left(N\Vert\partial_x(\rho u)^0\Vert+(c_{N-1}+2c_{N-2}+\cdots+Nc_0)\Delta t^2\right)\\
    &+\left(\bar{c}_{N-1}+\bar{c}_{N-2}+\cdots+\bar{c}_0\right)\Delta t^2\\
    \leq& (1+2+\cdots+N)C_2\Delta t^3+\left(\bar{c}_{N-1}+\bar{c}_{N-2}+\cdots+\bar{c}_0\right)\Delta t^2\\
    \leq& (C_1+C_2)\Delta t,
\end{align*}
where $C_1=\text{max}\{\bar{c}_0,\cdots,\bar{c}_{N-1}\}$.
\end{proof}
The above theorem shows that the Lie splitting version of the proposed SL scheme
provides a consistent first-order accurate approximation to the quasi-neutral state 
$\text{RP}^0$ of the system $\text{RP}^\lambda$ if the density and momentum of the
initial distribution are compatible with $\text{RP}^0$. This theoretical
result is significant, because it demonstrates the scheme's ability to properly capture the quasi-neutral behavior.
\begin{remark}
No matter how the electrostatic potential $\phi$ is chosen, the Strang splitting
version of the SL scheme does not preserve consistency with the quasi-neutral state.
If we start with $\phi$ such that $\partial_x(\rho^n\partial_x\phi)=\partial_{xx}S^n$, we may have $\partial_{xx}(\rho u)^{**}=0$, and then $\rho^{n+1}=1$. However,
no choice of initial data ensures that $\partial_x(\rho u)^{n+1}=0$.
    This implies that at the next time step, the scheme may not maintain the quasi-neutral state, leading to a deviation from the expected asymptotic behavior.
    Thus, it appears that there is no easy way to construct high-order
    splitting schemes preserving the quasi-neutral state \eqref{VP1-dim-ql} of the RVP system. For more details, we refer the interested reader to\cite{Crou2016}.
\end{remark}

\subsection{Stability of the AP-CSLDG-1 scheme}

The above analysis provides sufficient conditions for consistency
of the semi-discrete AP-CSLDG-1 scheme with the quasi-neutral RVP system. 
We now turn to the stability analysis for the numerical evolution operators $\mathcal{S}_{\mathbf{f}}^h$
and $\mathcal{S}_{\mathbf{E}}^h$ of the fully discrete Lie splitting scheme
\begin{align*}
    &f_h^*=\mathcal{S}_{\mathbf{f}}^h(\Delta t)f_h^n,\\
    &f_h^{n+1}=\mathcal{S}_{\mathbf{E}}^h(\Delta t)f_h^*.
\end{align*}
In view of the consistency result established in Theorem \ref{theorem:consistency},
guaranteed stability would imply that the AP-CSLDG-1 scheme is indeed asymptotically
preserving.

\begin{theorem}[$L^2$ stability of the operator $\mathcal{S}_{\mathbf{f}}^h$]
Let $f_h^*=\mathcal{S}_{\mathbf{f}}^h(\Delta t)f_h^n$. Then 
\begin{align*}
    \Vert f_h^{*}\Vert\leq \Vert f_h^n\Vert
\end{align*}
under the assumption of periodic boundary conditions.
\end{theorem}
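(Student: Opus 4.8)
The plan is to reduce this phase-space estimate to a one-dimensional $L^2$ bound for constant-coefficient advection and then to reassemble it in the velocity variable by exact Gaussian quadrature. First I would note that, for each fixed Gaussian velocity node $v_{i,p}$, the operator $\mathcal{S}_{\mathbf{f}}^h$ acts through the CSLDG update \eqref{Hhf} for the constant-coefficient transport equation $\partial_t f+v_{i,p}\partial_x f=0$. Since the advection speed is constant, the characteristic feet are $x_{j\pm\frac12}^*=x_{j\pm\frac12}-v_{i,p}\Delta t$, so the upstream cell $I_{x_j}^*$ is a rigid translate of $I_{x_j}$ by $-v_{i,p}\Delta t$; using periodicity, the family $\{I_{x_j}^*\}_{j=1}^{N_x}$ is again a partition of $\Omega_x$ (it simply wraps around when $v_{i,p}\Delta t$ exceeds a cell width). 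Moreover, the exact solution of the adjoint problem \eqref{adjont} at time $t^n$ is the degree-$k$ polynomial $\psi^n(x)=\Psi(x+v_{i,p}\Delta t)$, so the polynomial reconstruction $\psi^*$ in the CSLDG procedure is in fact exact on all of $I_{x_j}^*$ and no interpolation error enters the analysis.

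Next I would derive a cellwise energy identity. Choosing the test function $\Psi=f_h^*(\cdot,v_{i,p})|_{I_{x_j}}\in P^k(I_{x_j})$ in \eqref{Hhf} gives
\[
\Vert f_h^*(\cdot,v_{i,p})\Vert_{L^2(I_{x_j})}^2 = \int_{I_{x_j}^*} f_h^n(x,v_{i,p})\,f_h^*(x+v_{i,p}\Delta t,v_{i,p})\,\mathrm dx.
\]
Applying the Cauchy--Schwarz inequality on $I_{x_j}^*$ and the change of variables $x\mapsto x+v_{i,p}\Delta t$ (which maps $I_{x_j}^*$ onto $I_{x_j}$), the second factor becomes $\Vert f_h^*(\cdot,v_{i,p})\Vert_{L^2(I_{x_j})}$; cancelling it yields
\[
\Vert f_h^*(\cdot,v_{i,p})\Vert_{L^2(I_{x_j})} \leq \Vert f_h^n(\cdot,v_{i,p})\Vert_{L^2(I_{x_j}^*)}.
\]
Squaring, summing over $j$, and using that $\{I_{x_j}^*\}_j$ tiles $\Omega_x$, I obtain the one-dimensional stability bound $\Vert f_h^*(\cdot,v_{i,p})\Vert_{L^2(\Omega_x)}\leq\Vert f_h^n(\cdot,v_{i,p})\Vert_{L^2(\Omega_x)}$.

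Finally I would lift this to the full phase space. For each fixed $x$, the functions $f_h^*(x,\cdot)$ and $f_h^n(x,\cdot)$ are polynomials of degree $k$ on every velocity cell $I_{v_i}$, hence their squares have degree $2k$ there; since the $(k+1)$-point Gaussian rule on $I_{v_i}$ is exact for polynomials of degree up to $2k+1$, we have the exact identities
\[
\Vert f_h^*\Vert^2 = \sum_{i=1}^{N_v}\sum_{p=1}^{k+1} w_{i,p}\,\Vert f_h^*(\cdot,v_{i,p})\Vert_{L^2(\Omega_x)}^2,\qquad \Vert f_h^n\Vert^2 = \sum_{i=1}^{N_v}\sum_{p=1}^{k+1} w_{i,p}\,\Vert f_h^n(\cdot,v_{i,p})\Vert_{L^2(\Omega_x)}^2
\]
with nonnegative weights $w_{i,p}$. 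Combining these with the one-dimensional bound of the previous step gives $\Vert f_h^*\Vert\leq\Vert f_h^n\Vert$. I expect the main difficulty to be careful bookkeeping rather than any sharp estimate: one must verify that constant advection renders the CSLDG reconstruction of the test function exact (so the energy identity carries no reconstruction remainder), that the shifted upstream cells genuinely form a partition of the periodic domain even under CFL-free time steps, and that the velocity quadrature is exact for the squared integrand. I would also remark that the positivity-preserving scaling limiter, being a convex shrinkage of $f_h$ toward its cell mean, is itself $L^2$-nonexpansive, so the bound is unaffected if the limiter is applied after the update.
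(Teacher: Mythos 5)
Your proposal is correct and follows essentially the same route as the paper: test \eqref{Hhf} with $\Psi=f_h^*(\cdot,v_{i,p})$, exploit the exact shift structure of the adjoint solution for constant advection, sum over the upstream cells using periodicity, and lift to phase space via exact Gaussian quadrature in $v$. The only cosmetic difference is that you bound the cross term by cellwise Cauchy--Schwarz with cancellation, whereas the paper uses Young's inequality $ab\leq\tfrac12(a^2+b^2)$ and absorbs the $f_h^*$ term after summation; the two are equivalent here, and your added observations (exactness of the reconstruction for constant speed, $L^2$-nonexpansiveness of the scaling limiter) are correct refinements rather than departures.
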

\begin{proof}
Using $\Psi(x)=f_h^{*}(x,v_{i,p})$ in \eqref{Hhf}, we find that
\begin{align*}
    \int_{I_{x_j}}\vert f_h^{*}(x,v_{i,p})\vert^2\mathrm dx&=\int_{I_{x_j}^*}f_h^n(x,v_{i,p})f_h^{*}(x+v_{i,p}\Delta t,v_{i,p})\mathrm dx\\
    & \leq \frac{1}{2}\left(\int_{I_{x_j}}\vert f_h^*(x,v_{i,p})\vert^2\mathrm dx+ \int_{I_{x_j}^*}\vert f_h^n(x,v_{i,p})\vert^2\mathrm dx \right).
\end{align*}
Summing the above inequalities over $j=1,\ldots,N_x$ and using the assumption
of periodicity, we deduce
\begin{align*}
    \int_{\Omega_x}\vert f_h^{*}(x,v_{i,p})\vert^2\mathrm dx\leq \int_{\Omega_x}\vert f_h^n(x,v_{i,p})\vert^2\mathrm dx.
\end{align*}
The $L^2$ stability property
\begin{align*}
    \int_{\Omega_v}\int_{\Omega_x}\vert f_h^{*}(x,v_{i,p})\vert^2\mathrm dx\mathrm dv\leq \int_{\Omega_v}\int_{\Omega_x}\vert f_h^n(x,v_{i,p})\vert^2\mathrm dx\mathrm dv
\end{align*}
can now be shown by summing over the indices of the Gaussian quadrature points $v_{i,p}$.
\end{proof}
The operator $\mathcal{S}_{\mathbf{E}}^h$ is more difficult to analyze 
than $\mathcal{S}_{\mathbf{f}}^h$, since the underlying system $H_{\mathbf E}$ is
nonlinear. In the following theorem, we use linearization techniques to prove
the von Neumann stability as in \cite{Degond2008,Crou2016}.
\begin{theorem}[Linear stability of the operator $\mathcal{S}_{\mathbf{E}}^h$]
 The update $f_h^{n+1}=\mathcal{S}_{\mathbf{E}}^h(\Delta t)f_h^*$ using
 periodic boundary conditions and the linearization
 of the system $H_{\mathbf E}$ about the steady state
\begin{align*}
    \left\{
        \begin{aligned}
            &\rho=1,\\
            &\rho u=0,\\
            &\partial_x\phi=0
        \end{aligned}
    \right.
\end{align*}
is unconditionally $L^2$ stable in the sense of von Neumann stability analysis.
\end{theorem}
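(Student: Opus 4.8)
The plan is to follow the linearization/von Neumann strategy used in \cite{Degond2008,Crou2016}: linearize $\mathcal{S}_{\mathbf{E}}^h$ about the prescribed steady state, Fourier-transform in the periodic variable $x$ so that the wave numbers decouple, bound the resulting modewise amplification operator in $L^2(\Omega_v)$ uniformly in $\lambda$, and then reassemble the $L^2(\Omega)$ estimate.

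First I would linearize. Write $f=f_0(v)+g(x,v)$ with $f_0$ an equilibrium satisfying $\int_{\Omega_v}f_0\,\mathrm dv=1$ and $\int_{\Omega_v}vf_0\,\mathrm dv=0$, and retain only the terms linear in the perturbation $g$. Because the frozen speed $\partial_x\phi^{n+1}(x_{j,q})$ is $\mathcal{O}(g)$ and independent of $v$, the $v$-direction CSLDG update \eqref{HhE} linearizes to
\begin{align*}
    g^{n+1}(x,v)=g^{*}(x,v)-\Delta t\,\partial_x\phi^{n+1}(x)\,\Pi_v f_0'(v),
\end{align*}
where $\Pi_v$ is the $L^2$-orthogonal projection onto $\mathcal{D}_k(\mathcal{E}_v)$ (it enters because $f_0'\notin\mathcal{D}_k(\mathcal{E}_v)$ in general, while the DG function $g^{*}$ is left unchanged), and the RPE \eqref{RPEdis1st} linearizes about $\rho=1$ to the constant-coefficient elliptic problem
\begin{align*}
    -\partial_x\!\bigl[(\lambda^2+\Delta t^2)\,\partial_x\phi^{n+1}\bigr]=-\Delta t^2\,\partial_{xx}\delta S^{*}+\Delta t\,\partial_x\delta(\rho u)^{*}-\delta\rho^{*},
\end{align*}
the perturbed moments $\delta\rho^{*},\delta(\rho u)^{*},\delta S^{*}$ being those of $g^{*}$. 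Fourier-transforming in $x$ decouples the modes $e^{\mathrm{i}kx}$; for $k\ne0$, writing $\widehat{\cdot}$ for the Fourier coefficients (still functions of $v$), the step becomes
\begin{align*}
    \widehat g^{\,n+1}(k,\cdot)=\widehat g^{\,*}(k,\cdot)-\Delta t\,\mathrm{i}k\,\widehat\phi(k)\,\Pi_v f_0',\qquad
    (\lambda^2+\Delta t^2)k^2\,\widehat\phi(k)=\Delta t^2k^2\widehat{S}^{*}+\mathrm{i}k\Delta t\,\widehat{(\rho u)}^{*}-\widehat{\rho}^{*},
\end{align*}
while the $k=0$ mode is the identity on $\widehat g^{\,*}(0,\cdot)$. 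Hence the amplification operator for mode $k$ is the rank-one perturbation of the identity $G_k:\widehat g\mapsto\widehat g-\Delta t\,\mathrm{i}k\,\widehat\phi(k)\,\Pi_v f_0'$, with $\widehat\phi(k)$ a fixed linear combination of $\int\widehat g\,\mathrm dv$, $\int v\widehat g\,\mathrm dv$, $\int v^2\widehat g\,\mathrm dv$.

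The heart of the proof is the modewise bound $\|G_k\widehat g\|_{L^2(\Omega_v)}\le\|\widehat g\|_{L^2(\Omega_v)}$ for all $k$. Expanding the square, this reduces to showing that the electrostatic gain term $\Delta t^2k^2|\widehat\phi|^2\|\Pi_v f_0'\|_{L^2(\Omega_v)}^2$ never exceeds the coupling term $2\Delta t\,\mathrm{Re}\bigl(\overline{\mathrm{i}k\widehat\phi}\,\langle\Pi_v f_0',\widehat g\rangle\bigr)$, where $\langle\cdot,\cdot\rangle$ is the $L^2(\Omega_v)$ inner product. I would substitute the explicit $\widehat\phi$ from the linearized RPE and use the equilibrium identities $\int f_0'\,\mathrm dv=0$, $\int vf_0'\,\mathrm dv=-1$, $\int v^2f_0'\,\mathrm dv=0$ (which also give $\widehat\rho^{\,n+1}=\widehat\rho^{*}$, $\widehat S^{\,n+1}=\widehat S^{*}$ and $\widehat{(\rho u)}^{\,n+1}=\widehat{(\rho u)}^{*}+\Delta t\,\mathrm{i}k\widehat\phi$); the decisive structural fact is that the elliptic coefficient obeys $\lambda^2+\Delta t^2\ge\Delta t^2>0$, so that $k^2|\widehat\phi|^2$ stays controlled \emph{independently of} $\lambda$, and it is cleanest to absorb an $\mathcal{O}(\lambda^2+\Delta t^2)$-weighted electrostatic contribution $\propto|k\widehat\phi|^2$ into the energy so that a telescoping inequality closes. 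An alternative, essentially computation-free justification of the same bound: the exact $v$-transport $\partial_tf+\partial_x\phi^{n+1}\partial_vf=0$ has $v$-independent (hence divergence-free) velocity, so it preserves $\|f\|_{L^2}$ on data vanishing near $\partial\Omega_v$; the CSLDG update is exactly the $L^2_v$-orthogonal projection of this isometric shift, hence non-expansive, and because the steady state is a fixed point of the nonlinear scheme, its linearization $G_k$ is automatically a contraction.

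Finally, summing the modewise estimates by Parseval in $x$ yields non-expansiveness of $f_h^{n+1}(x_{j,q},\cdot)$ in $L^2(\Omega_v)$ at each physical Gauss node; since the scheme reconstructs the $x$-dependence of $f_h^{n+1}$ by Lagrange interpolation through the $k+1$ Gauss nodes of each cell $I_{x_j}$ and the $(k+1)$-point Gauss rule integrates the degree-$2k$ integrand $|f_h^{n+1}|^2$ exactly, one gets $\|f_h^{n+1}\|^2=\sum_{j,q}w_{j,q}\|f_h^{n+1}(x_{j,q},\cdot)\|_{L^2(\Omega_v)}^2\le\sum_{j,q}w_{j,q}\|f_h^{*}(x_{j,q},\cdot)\|_{L^2(\Omega_v)}^2=\|f_h^{*}\|^2$, that is, unconditional $L^2$ stability. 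The main obstacle is the modewise energy estimate: one must show the electrostatic gain never outweighs the coupling term \emph{uniformly in $\lambda$ and $\Delta t$}. A naive time discretization of the Poisson equation produces an amplification that blows up as $\lambda\to0$; it is precisely the regularizing factor $\lambda^2+\rho^n\Delta t^2$ built into \eqref{RPEdis1st} that rescues the argument, and verifying this forces one to track the quadratic form in $(\widehat\rho^{*},\widehat{(\rho u)}^{*},\widehat S^{*})$ with care.
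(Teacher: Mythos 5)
Your route is genuinely different from the paper's, and the difference is where the gap lies. The paper does not attempt an $L^2(\Omega)$ energy estimate on the distribution function at all: it takes moments of \eqref{HhE} (testing with $\Psi=1$ and $\Psi=v$), closes the resulting relations with the discretized RPE into a four-variable system for $(\hat\rho,\hat{\mathcal J},\hat\phi,\hat\psi)$ with $\hat\psi^{n+1}=(\hat\phi^{n+1}-\hat\phi^n)/\Delta t$, linearizes about the steady state, and explicitly computes the eigenvalues of the $4\times4$ amplification matrix $B^{-1}A$, finding $\mu_1=\mu_2=1$ and $|\mu_{3,4}|=\lambda/\sqrt{\lambda^2+\Delta t^2}<1$ uniformly in $\lambda$. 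The whole content of the theorem is the damping of these plasma-oscillation modes of the \emph{moment} system; "von Neumann stability" here refers to that finite-dimensional spectral statement, not to non-expansiveness of the kinetic update in $L^2(\Omega_x\times\Omega_v)$.

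Your proposal instead asserts the modewise bound $\|G_k\widehat g\|_{L^2(\Omega_v)}\le\|\widehat g\|_{L^2(\Omega_v)}$ for the linearized kinetic operator including the field feedback, and this is the step that does not hold up. Expanding the square gives a sign-indefinite coupling term of order $\Delta t$ (it depends on the phase of $\langle\Pi_vf_0',\widehat g\rangle$ relative to $ik\widehat\phi$) against an electrostatic gain of order $\Delta t^2$; for small $\Delta t$ the indefinite first-order term dominates, so monotone decay of $\|\widehat g\|_{L^2_v}$ cannot be obtained without the modified energy you only gesture at ("absorb an $\mathcal{O}(\lambda^2+\Delta t^2)$-weighted electrostatic contribution"), and constructing that energy is essentially equivalent to the eigenvalue computation the paper performs. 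Your fallback argument is also invalid on two counts: non-expansiveness of the CSLDG $v$-shift with \emph{frozen} $\partial_x\phi$ (which is just the previous theorem's Cauchy--Schwarz argument transposed to the $v$-direction) says nothing about the linearization that includes the variation of $\phi^{n+1}$ with respect to $f^*$, which is exactly the rank-one term at issue; and a steady state being a fixed point of the nonlinear scheme does not make its linearization a contraction. Finally, your one-level elliptic form of the linearized RPE discards the $\phi^n,\phi^{n-1}$ history that the paper retains via $\hat\psi$, which is precisely where the strict damping $|\mu_{3,4}|<1$ comes from.
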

\begin{proof}
Using $\Psi(v)=1$ in \eqref{HhE}, summing the resulting equations
\begin{align}
    \int_{I_{v_i}}f_h^{n+1}(x_{j,q},v)\mathrm dv=\int_{I_{v_i}^*}f_h^*(x_{j,q},v)\mathrm dv
\end{align}
 over $i=1,\ldots,N_v$ and using the assumption of periodicity, we obtain
\begin{align}\label{0m}
    \int_{\Omega_v}f_h^{n+1}(x_{j,q},v)\mathrm dv=\int_{\Omega_v}f_h^*(x_{j,q},v)\mathrm dv.
\end{align}
This integral identity can be written as
\begin{align}\label{rhoequality}
    \rho^{n+1}(x_{j,q})=\rho^*(x_{j,q}).
\end{align}
Regardless of the mesh $\mathcal{E}_x$ for the discretization of $\Omega_x$, equation \eqref{rhoequality} is valid at all Gaussian quadrature points $x_{j,q}$. Summation over the indices of these points reveals that $\rho^{n+1}(x)=\rho^*(x)$ a.e. in
$\Omega_x$.

Next, we use $\Psi(v)=v$ in \eqref{HhE} to show that
\begin{align}
    \int_{I_{v_i}}f_h^{n+1}v\mathrm dv&=\int_{I_{v_i}^*}f_h^*(v+\Delta t\partial_x\phi^{n+1})\mathrm dv\notag\\
    &=\int_{I_{v_i}^*}f_h^*v\mathrm dv+\Delta t\partial_x\phi^{n+1}\int_{I_{v_i}^*}f_h^*
    \mathrm dv.
\end{align}
Summation 
over $i=1,\ldots,N_v$ under the assumption of periodicity gives
\begin{align}\label{1m}
    \int_{\Omega_v}f_h^{n+1}v\mathrm dv=\int_{\Omega_v}f_h^*v\mathrm dv
    +\Delta t\partial_x\phi^{n+1}\int_{\Omega_v}f_h^*\mathrm dv.
\end{align}
This proves that
$\frac{(\rho u)^{n+1}-(\rho u)^*}{\Delta t}-\rho^*\partial_x\phi^{n+1}=0$
a.e. in $\Omega_x$.

Combining the above auxiliary results, we consider the nonlinear semi-discrete problem
\begin{align}\label{nonlinsysHE}
    \left\{
        \begin{aligned}
            &\rho^{n+1}-\rho^*=0,\\
            &\frac{(\rho u)^{n+1}-(\rho u)^*}{\Delta t}-\rho^*\partial_x\phi^{n+1}=0,\\
            &\lambda^2\frac{\partial_{xx}\phi^{n+1}-2\partial_{xx}\phi^n+\partial_{xx}\phi^{n-1}}{\Delta t^2}+\partial_x(\rho^*\partial_x\phi^{n+1})=\partial_{xx}S^n,
        \end{aligned}
    \right.
\end{align}
where $S=\rho u^2+p$ and $p=\rho^{\gamma}$ with $\gamma>1$ is the pressure.
Linearizing the system \eqref{nonlinsysHE} about the steady state $\rho=1,~\mathcal{J}=\rho u=0,~\partial_x\phi=0$, we arrive at
\begin{align}
    \left\{
        \begin{aligned}
            &\rho^{n+1}-\rho^*=0,\\
            &\frac{\mathcal{J}^{n+1}-\mathcal{J}^*}{\Delta t}-\partial_x\phi^{n+1}=0,\\
            &\lambda^2\frac{\partial_{xx}\phi^{n+1}-2\partial_{xx}\phi^n+\partial_{xx}\phi^{n-1}}{\Delta t^2}+\partial_{xx}\phi^{n+1}=\partial_{xx}\rho^*.
        \end{aligned}
    \right.
\end{align}

Applying the spatial Fourier transform to $\rho,~\mathcal{J}$ and $\phi$, we denote the transformed variables by $\hat\rho,~\hat{\mathcal{J}}$ and $\hat\phi$, respectively. Introducing $\hat\psi^{n+1}=(\hat\phi^{n+1}-\hat\phi^n)/\Delta t$, we write
the linear system
\begin{align}
    \left\{
        \begin{aligned}
            &\hat\rho^{n+1}-\hat\rho^*=0,\\
            &\hat {\mathcal{J}}^{n+1}-\hat{\mathcal{J}}^*-ik\Delta t\hat\phi^{n+1}=0,\\
            &\hat\phi^{n+1}-\hat\phi^n-\Delta t\hat\psi^{n+1}=0,\\
            &\lambda^2\hat\psi^{n+1}-\lambda^2\hat\psi^n+\Delta t\hat\phi^{n+1}-\Delta t\hat\rho^*=0
        \end{aligned}
    \right.
\end{align}
in the matrix form
\begin{align}
B
    \left(
    \begin{array}{cccc}
    \hat\rho^{n+1}  \\
    \hat {\mathcal{J}}^{n+1}  \\
    \hat\phi^{n+1}   \\
    \hat\psi^{n+1}  \\
    \end{array}
\right)=A
    \left(
    \begin{array}{cccc}
    \hat\rho^*  \\
    \hat {\mathcal{J}}^*  \\
    \hat\phi^n  \\
    \hat\psi^n  \\
    \end{array}
\right),
\end{align}
where
\begin{align*}
B=\left(
\begin{array}{cccc}
    1 & 0 & 0 & 0 \\
    0 & 1 & -ik\Delta t & 0 \\
    0 & 0 & 1 & -\Delta t \\
    0 & 0 & \Delta t & \lambda^2 \\
\end{array}
\right),\quad A=\left(
\begin{array}{cccc}
    1 & 0 & 0 & 0 \\
    0 & 1 & 0 & 0 \\
    0 & 0 & 1 & 0 \\
    \Delta t & 0 & 0 & \lambda^2 \\
\end{array}
\right).
\end{align*}
To prove stability, we need to show that the modulus of each complex-valued
eigenvalue $\mu$ of $B^{-1}A$
is less than or equal to 1. We easily deduce that
$\mu_1=\mu_2=1$ and $\mu_3=\frac{\lambda^2+i\lambda\Delta t}{\lambda^2+\Delta t^2}$, $\mu_4=\frac{\lambda^2-i\lambda\Delta t}{\lambda^2+\Delta t^2}$. Since
\begin{align*}
    |\mu_3|=|\mu_4|=\sqrt{\frac{\lambda^4+\lambda^2\Delta t^2}{\lambda^4+2\lambda^2\Delta t^2+\Delta t^4}}<1,
    \end{align*}
 the linearized scheme satisfies von Neumann's $L^2$ stability
    criterion unconditionally.
\end{proof}

The  $L^2$ stability of the full AP-CSLDG-1 scheme follows from the analysis
of the
operators $\mathcal{S}_{\mathbf{f}}^h$ and $\mathcal{S}_{\mathbf{E}}^h$ in the
last two theorems. In view of its consistency and stability, the proposed
discretization is indeed asymptotic preserving in the quasi-neutral limit by
definition of the weak AP property.

\section{Numerical experiments}

\hspace{4pt}
In this section, we present several numerical experiments to verify the effectiveness of the proposed AP-CSLDG schemes.
The main objectives of the numerical experiments conducted in this work are twofold.
First, we assess the efficiency and accuracy of the AP-CSLDG schemes in the non-quasi-neutral regime of the RVP system, validated through benchmark tests including nonlinear Landau damping and two-stream instability.
Second, we examine the AP properties of the scheme in the quasi-neutral regime.
In this case, the consistency is demonstrated via near equilibrium simulations, while the capability of the scheme to overcome the Debye length restriction is verified through the bump-on-tail instability test.

We will examine whether the numerical solutions conserve the following
integral quantities:
\begin{enumerate}
    \item Mass
    \begin{align*}
        \text{Mass} = \int_{\Omega_v}\int_{\Omega_x}f_h(x,v,t)\mathrm dx\mathrm dv.
    \end{align*}
    \item $L^p$ norm, $1\leq p < \infty$
    \begin{align*}
        \|f_h\|_p = \left(\int_{\Omega_v}\int_{\Omega_x}|f_h(x,v,t)|^p\mathrm dx\mathrm dv \right)^{\frac{1}{p}}.
    \end{align*}
    \item Entropy
    \begin{align*}
        \text{Entropy} = \int_{\Omega_v}\int_{\Omega_x}f_h(x,v,t)\log(f_h(x,v,t))\mathrm dx\mathrm dv.
    \end{align*}
    \item Energy
    \begin{align*}
        \text{Energy} = \frac{1}{2}\int_{\Omega_v}\int_{\Omega_x}f_h(x,v,t)v^2\mathrm dx\mathrm dv+\frac{\lambda^2}{2}\int_{\Omega_x}(\partial_x\phi)^2(x,t)\mathrm dx.
    \end{align*}
\end{enumerate}
Another quantity of interest is the logarithm of electrostatic energy $\text{log}(\varepsilon_p)$, where $\varepsilon_p$ is defined as
\begin{align*}
    \varepsilon_p = \frac{\lambda^2}{2}\int_{\Omega_x}(\partial_x\phi)^2(x,t)\mathrm dx.
\end{align*}


Unless otherwise specified, the time step is chosen as follows in our numerical experiments:
\begin{align}\label{dt}
    \Delta t = \frac{\text{CFL}}{\frac{v_{\text{max}}}{\Delta x}+\frac{\text{max}(|\partial_x\phi|)}{\Delta v}}.
\end{align}

\subsection{Numerical experiments for the RVP system in non-quasi-neutral regime}
To begin, we apply our AP-CSLDG schemes to the RVP system in the non-quasi-neutral regime, including nonlinear Landau damping and two-stream instability. In all tests, the dimensionless Debye length $\lambda$ is set equal to 1.
\subsubsection{Nonlinear Landau damping}
\hspace{4pt}
We test the accuracy of the AP-CSLDG-1 by simulation of nonlinear Landau damping. The initial condition is set to be the following perturbed equilibrium
\begin{align}
    f_0(x,v)=\frac{1}{\sqrt{2\pi}}(1+\alpha\text{cos}(kx))\text{exp}(-\frac{v^2}{2}),
\end{align}
with $\alpha=0.5$ and $k=0.5$. The phase space is $\Omega_x\times\Omega_v = [0,4\pi]\times[-5,5]$.
In this experiment, we set the time step~$\Delta t=\text{CFL}\cdot (\text{min}\{\Delta x,\Delta v\})^{k+1}$~and CFL=0.1 to minimize the temporal error. 
The phase space is discretized with $N_x\times N_v$, where $N_x=N_v$ is doubled from 16 to 128 successively. 

The well-known time reversibility of the VP system is used to test the order of convergence, that is, first compute up to time $T$, then recover the solution at $T$ with reverse velocity field and compare it with the initial condition, which is used as a reference solution. 
We show the $L^2$ error and the corresponding order of convergence for AP-CSLDG-1 $P^k$ in Table~\ref{spatial order}. As expected, convergence of order $k+1$  is observed.
\begin{table}[ht]
    \centering
    \caption{Nonlinear Landau damping: $L^2$ error and spatial order of accuracy of AP-CSLDG-1 $P^k$.}
	\begin{tabular}{*{7}{c}}
		\toprule
		\multirow{2}*{mesh} & \multicolumn{2}{c}{$P^1$} &\multicolumn{2}{c}{$P^2$}&\multicolumn{2}{c}{$P^3$}\\
		\cmidrule(lr){2-3}\cmidrule(lr){4-5}\cmidrule(lr){6-7}
		&$L^2$error & Order & $L^2$error & Order & $L^2$error & Order\\
		\midrule
        $16\times 16$&	6.556E-02&      &  5.446E-02  &        &4.654E-02 &\\
		$32\times 32$&	1.727E-02& 1.92 &  7.265E-03  &  2.91  &3.113E-03 &3.90  \\
		$64\times 64$&	4.366E-03&	1.98 &  9.183E-04  &  2.98  &1.963E-04 &3.99 \\
		$128\times 128$&	1.093E-03&	2.00 &  1.150E-04  &	3.00    &1.229E-05 &	4.00 \\
		\bottomrule
	\end{tabular}
    \label{spatial order}
\end{table}

\subsubsection{Two stream instability I}
\begin{figure}[h!]
    \centering
    \begin{minipage}[t]{0.48\linewidth}
        \centering
        \includegraphics[width=\linewidth]{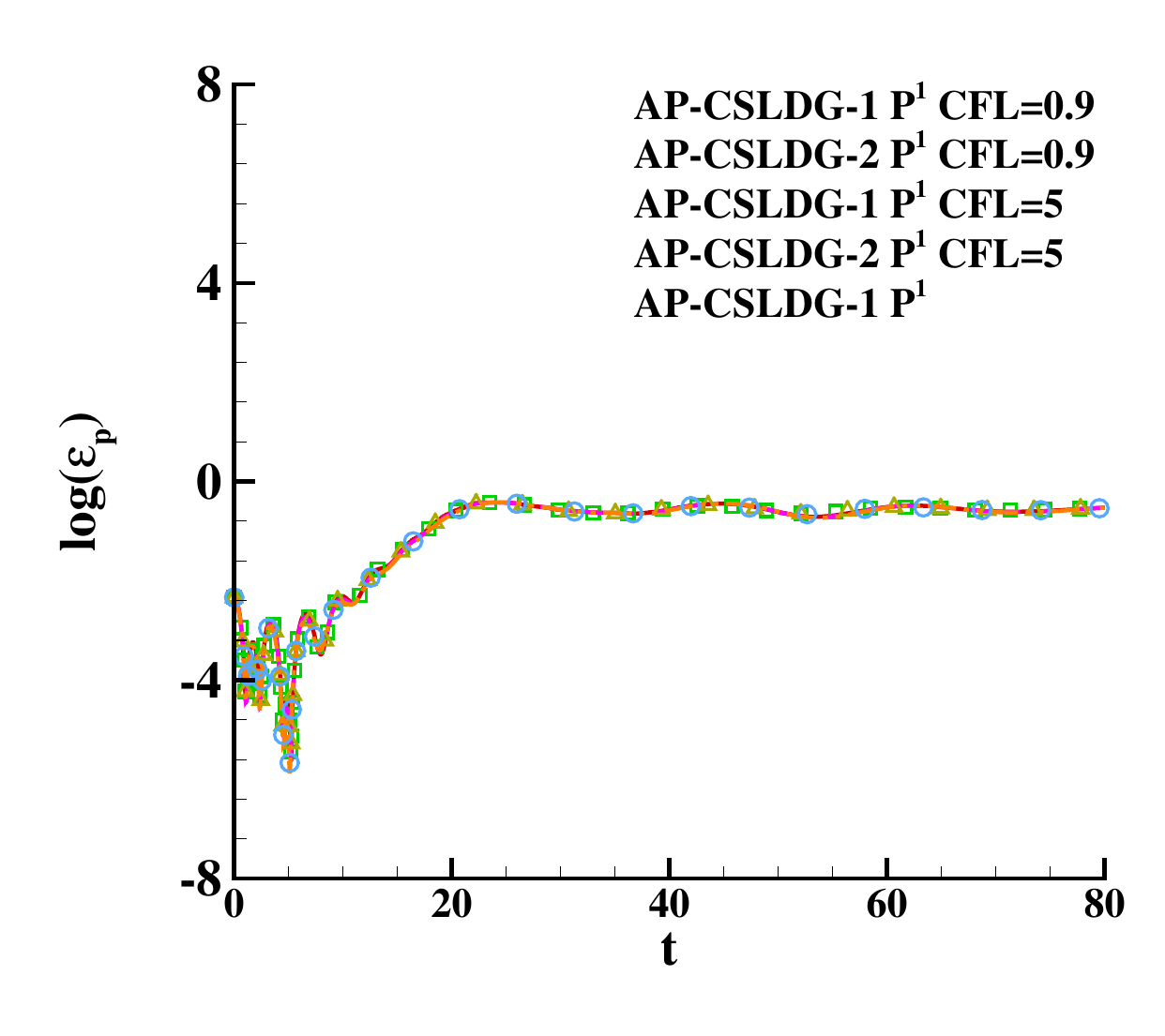}
    \end{minipage}
    \hfill 
    \begin{minipage}[t]{0.48\linewidth}
        \centering
        \includegraphics[width=\linewidth]{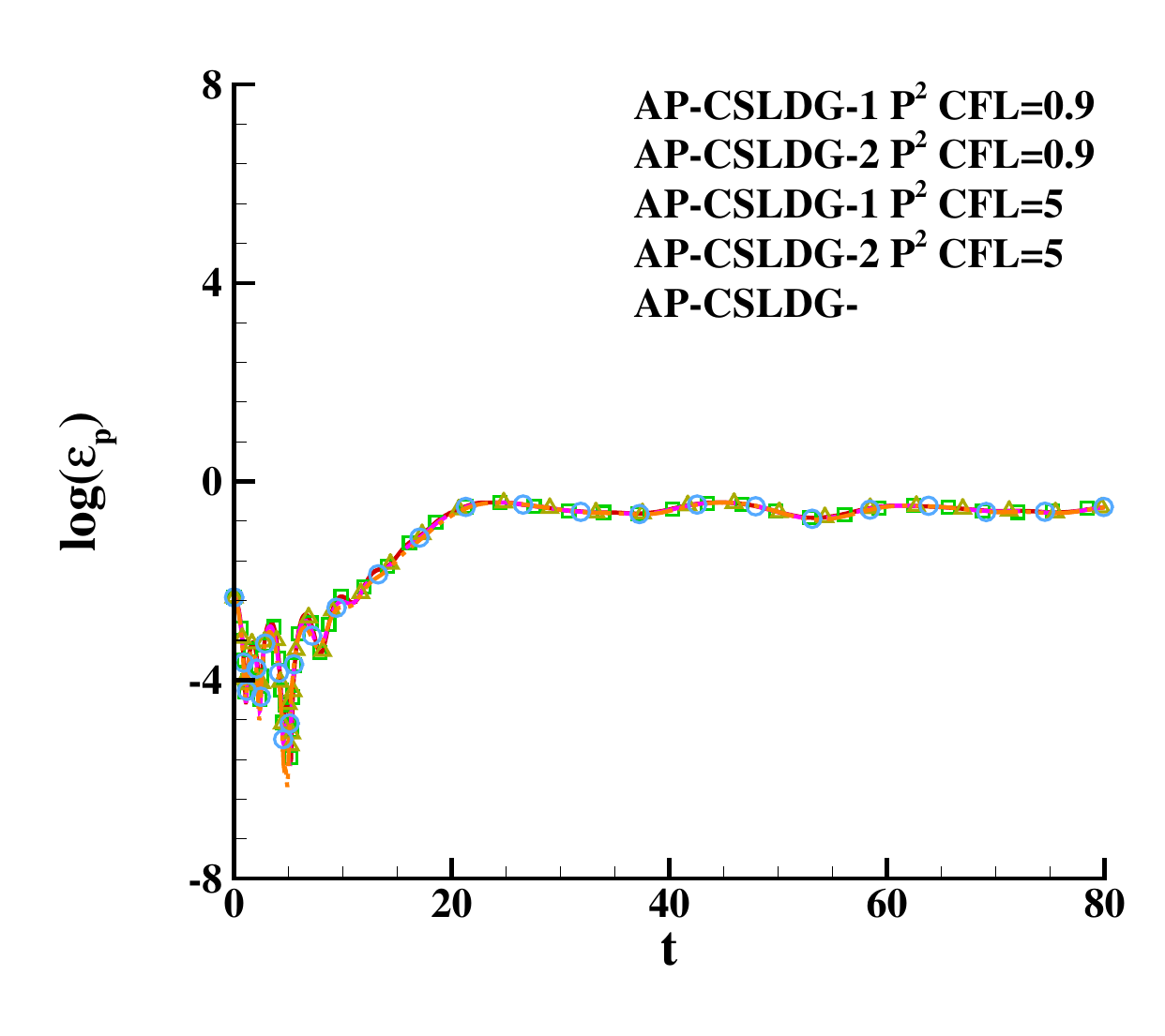}
    \end{minipage}
    \caption{Two stream instability I: $P^1$ (left) and $P^2$ (right) polynomial spaces, time evolution of the logarithm of electrostatic energy log$(\varepsilon_p)$ with different CFL numbers.} 
    \label{fig:ex3EL2}
\end{figure}
\begin{figure}[h!]
    \centering
    \begin{minipage}[t]{0.48\linewidth}
        \centering
        \includegraphics[width=\linewidth]{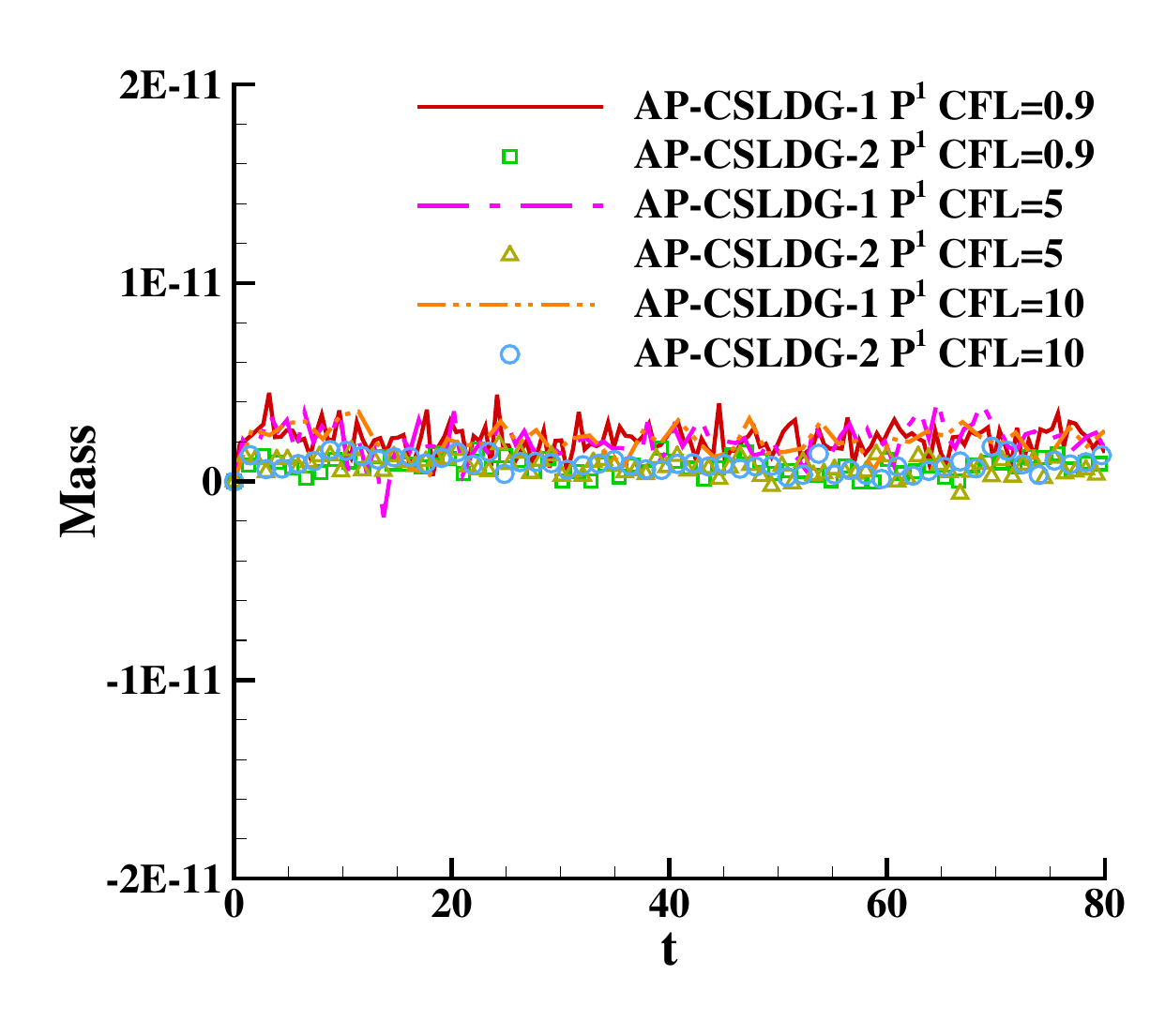}
    \end{minipage}
    \hfill 
    \begin{minipage}[t]{0.48\linewidth}
        \centering
        \includegraphics[width=\linewidth]{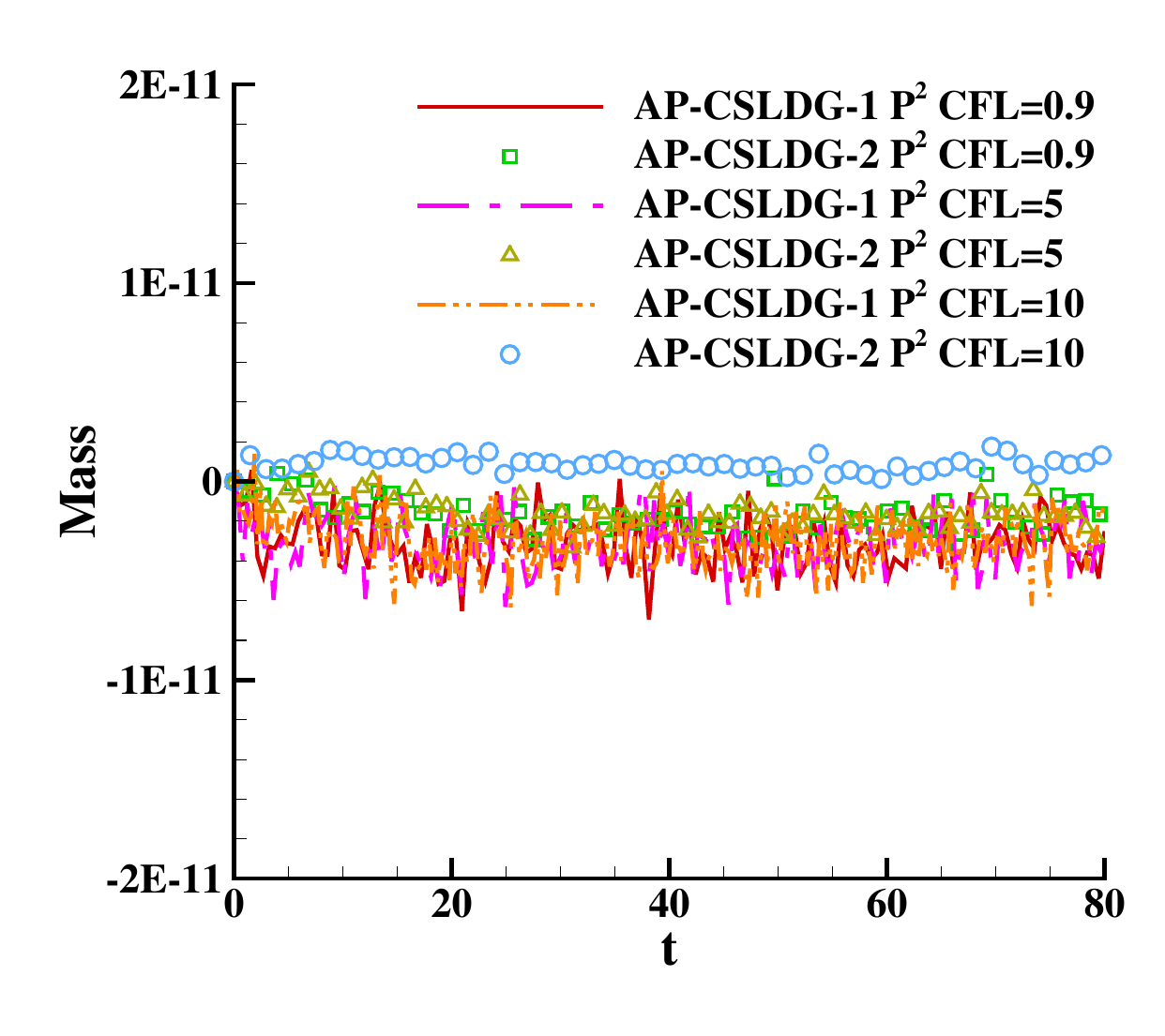}
    \end{minipage}
    
    \vspace{1em} 
    
    \begin{minipage}[t]{0.48\linewidth}
        \centering
        \includegraphics[width=\linewidth]{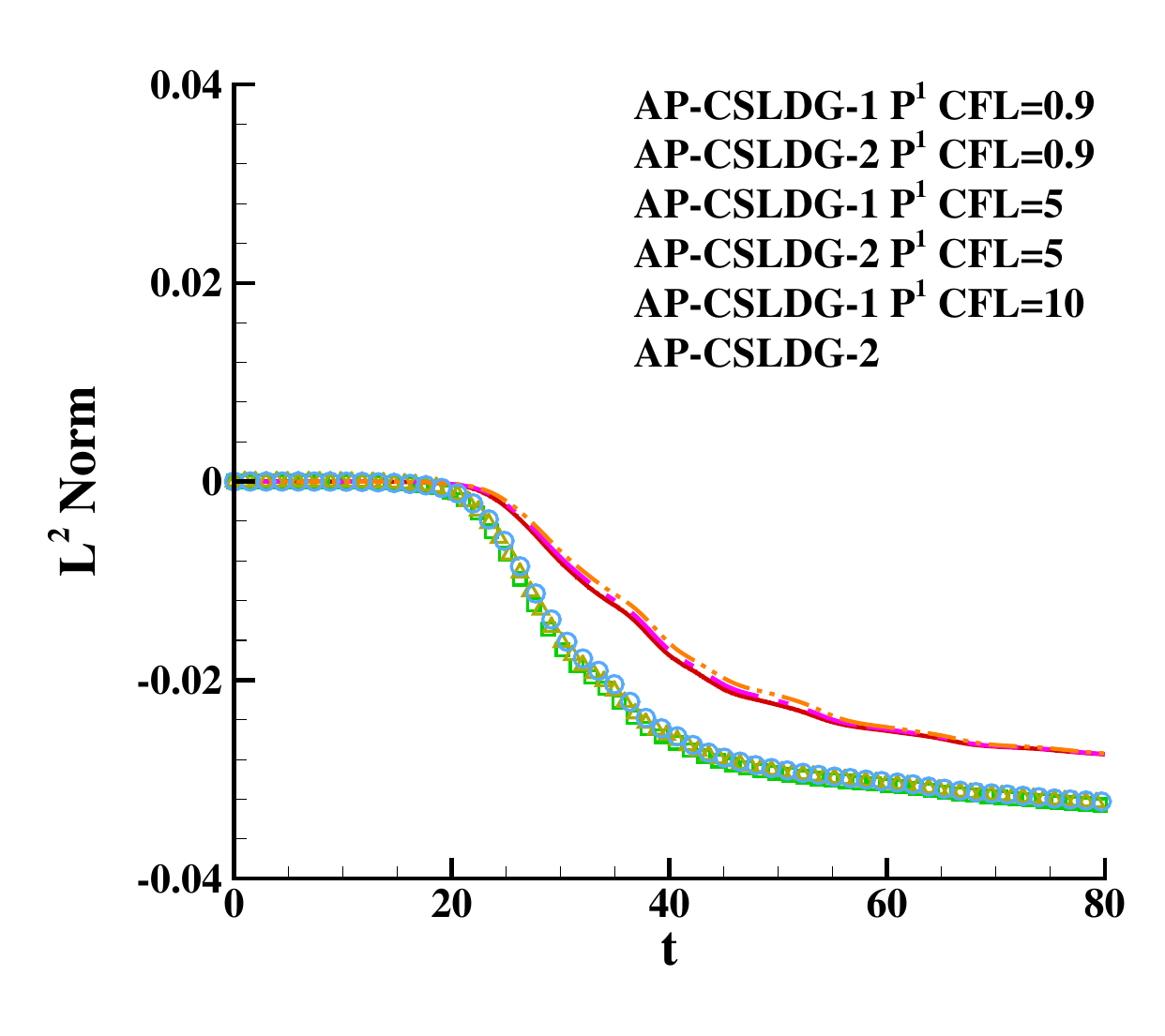}
    \end{minipage}
    \hfill
    \begin{minipage}[t]{0.48\linewidth}
        \centering
        \includegraphics[width=\linewidth]{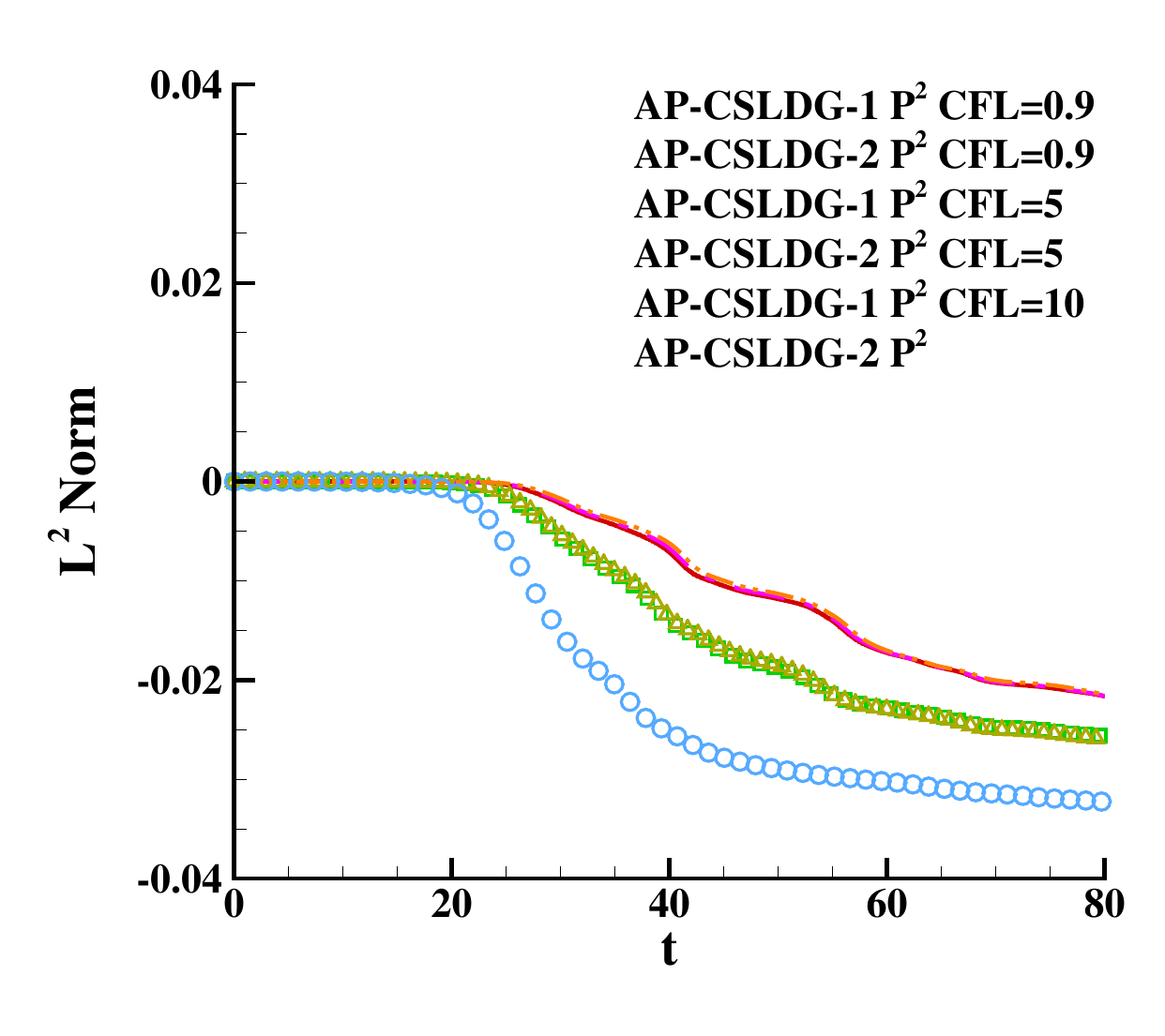}
    \end{minipage}
    
    \caption{Two stream instability I: $P^1$ (left) and $P^2$ (right) polynomial spaces, time evolution of the relative deviations for mass (top) and $L^2$ norm (bottom) with different CFL numbers.} 
    \label{fig:ex3Conser_1}
\end{figure}
\begin{figure}[h!]
    \centering    
    \begin{minipage}[t]{0.48\linewidth}
        \centering
        \includegraphics[width=\linewidth]{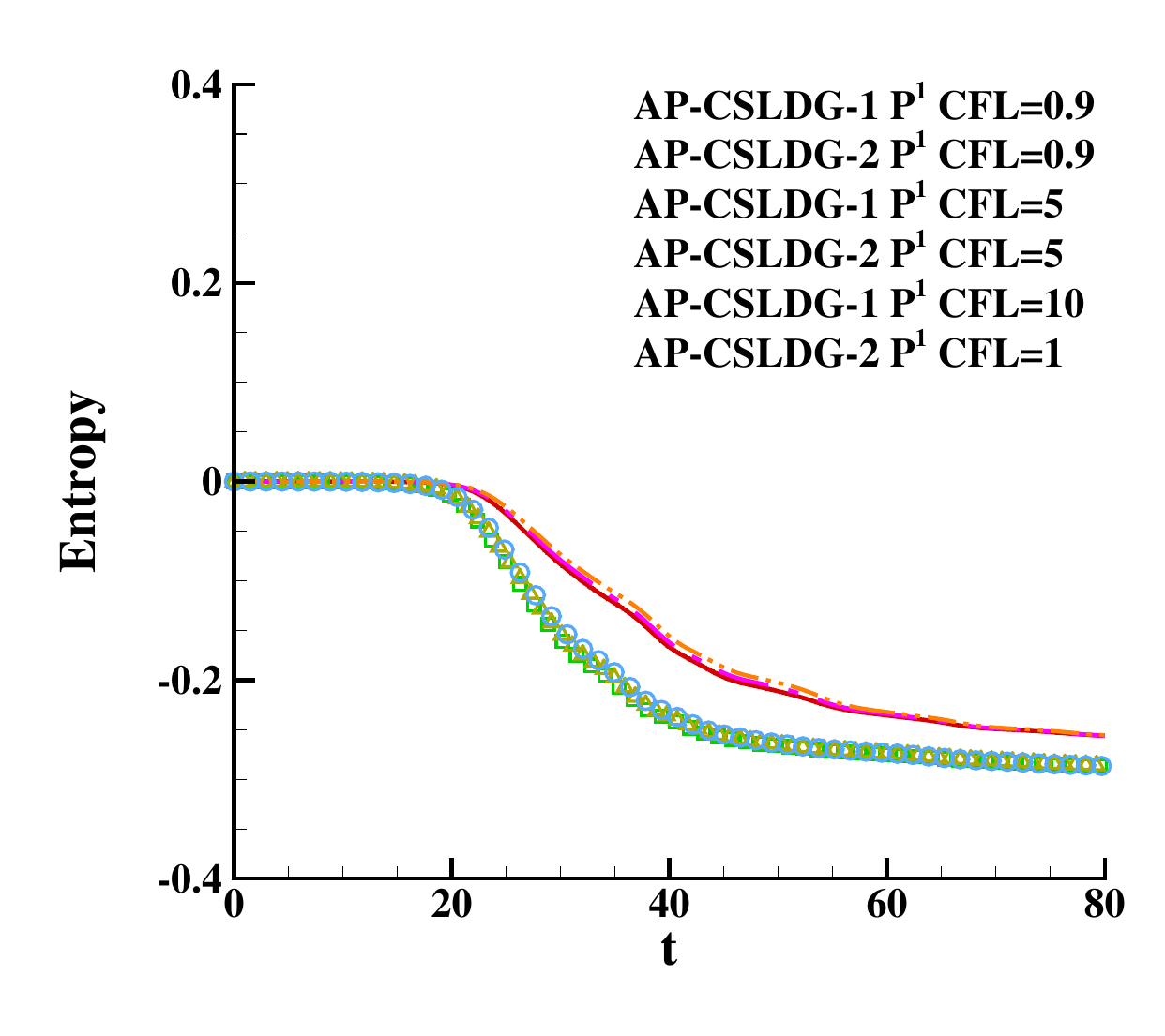}
    \end{minipage}
    \hfill
    \begin{minipage}[t]{0.48\linewidth}
        \centering
        \includegraphics[width=\linewidth]{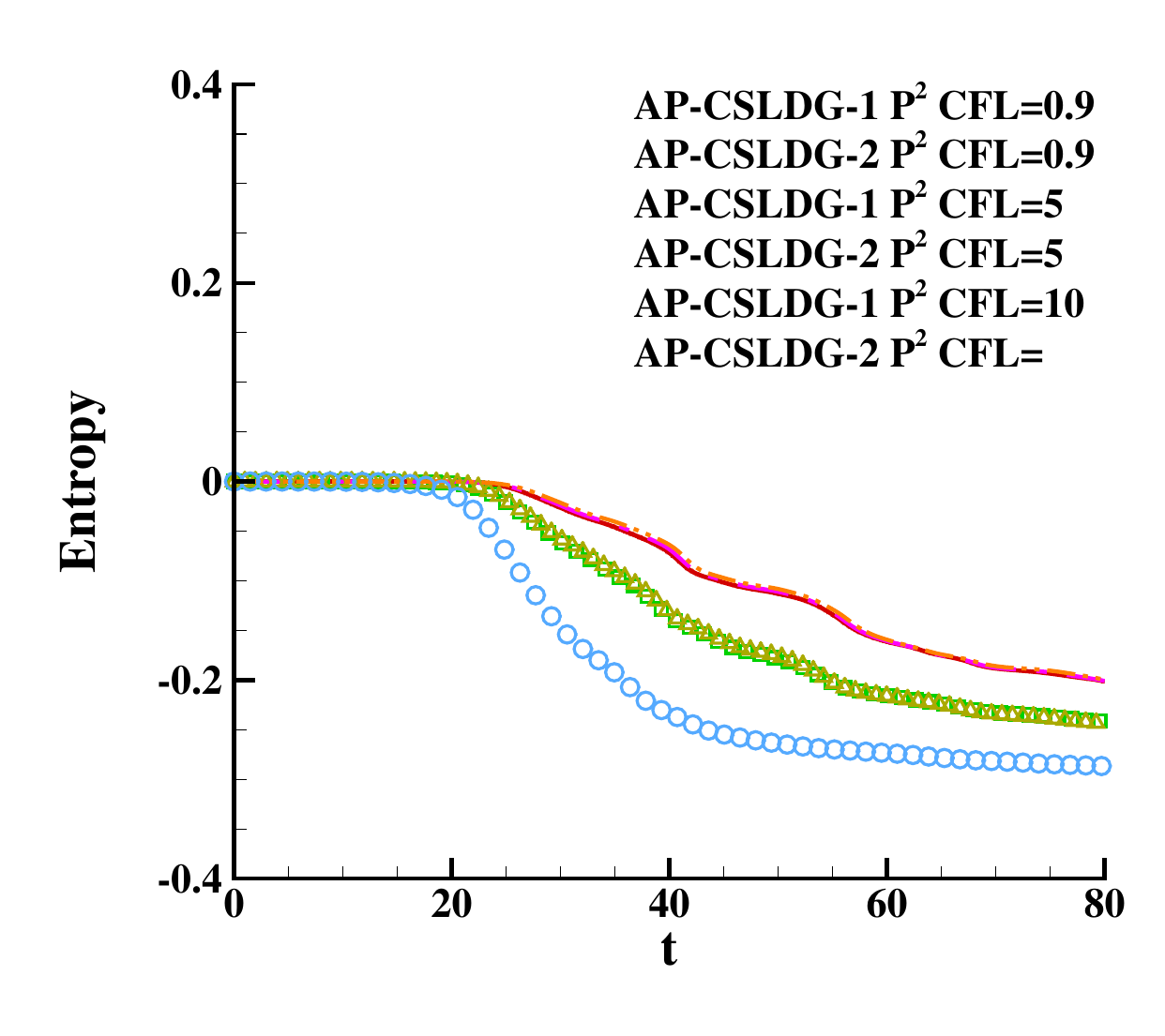}
    \end{minipage}

    \vspace{1em} 
    
    \begin{minipage}[t]{0.48\linewidth}
        \centering
        \includegraphics[width=\linewidth]{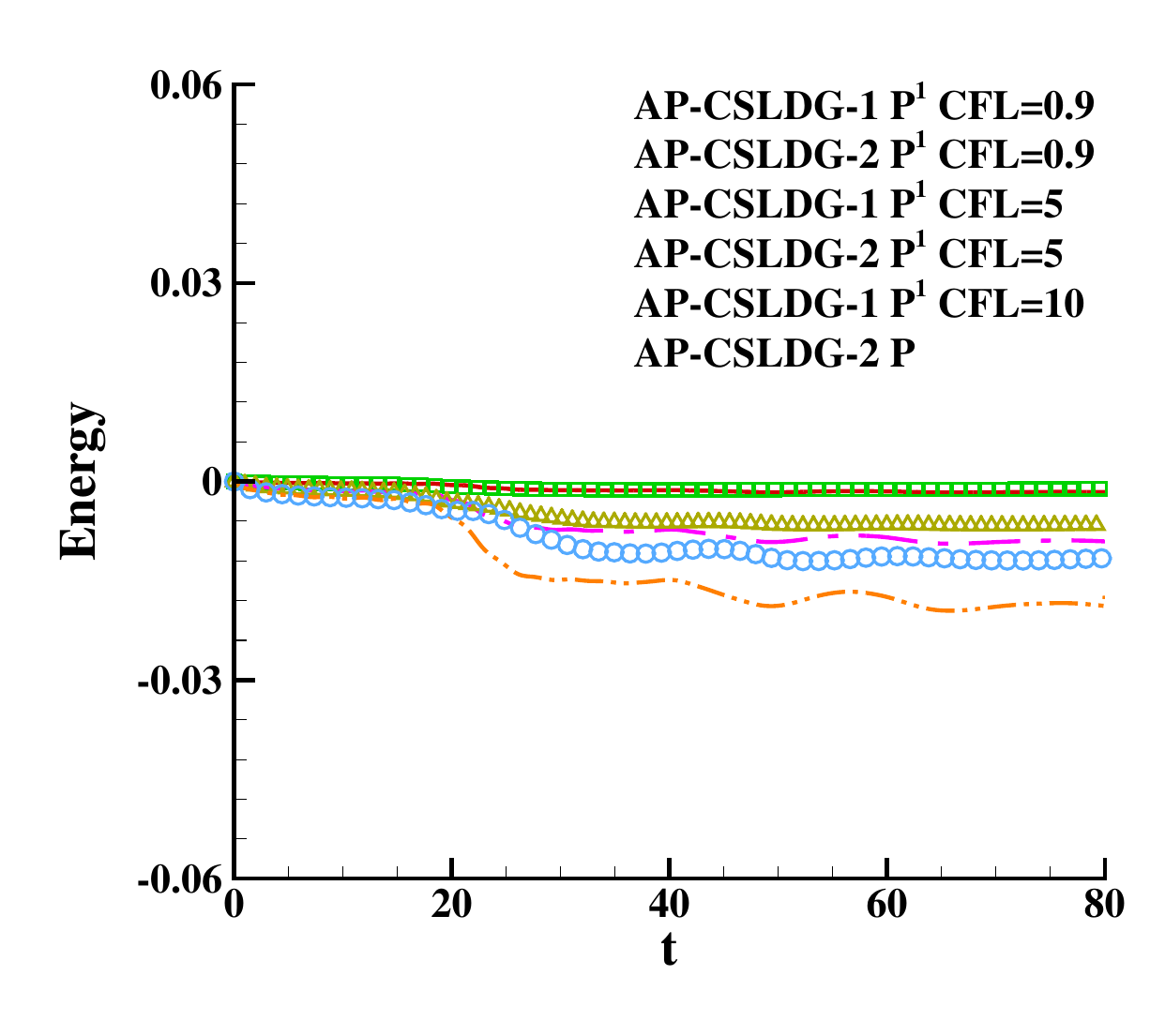}
    \end{minipage}
    \hfill
    \begin{minipage}[t]{0.48\linewidth}
        \centering
        \includegraphics[width=\linewidth]{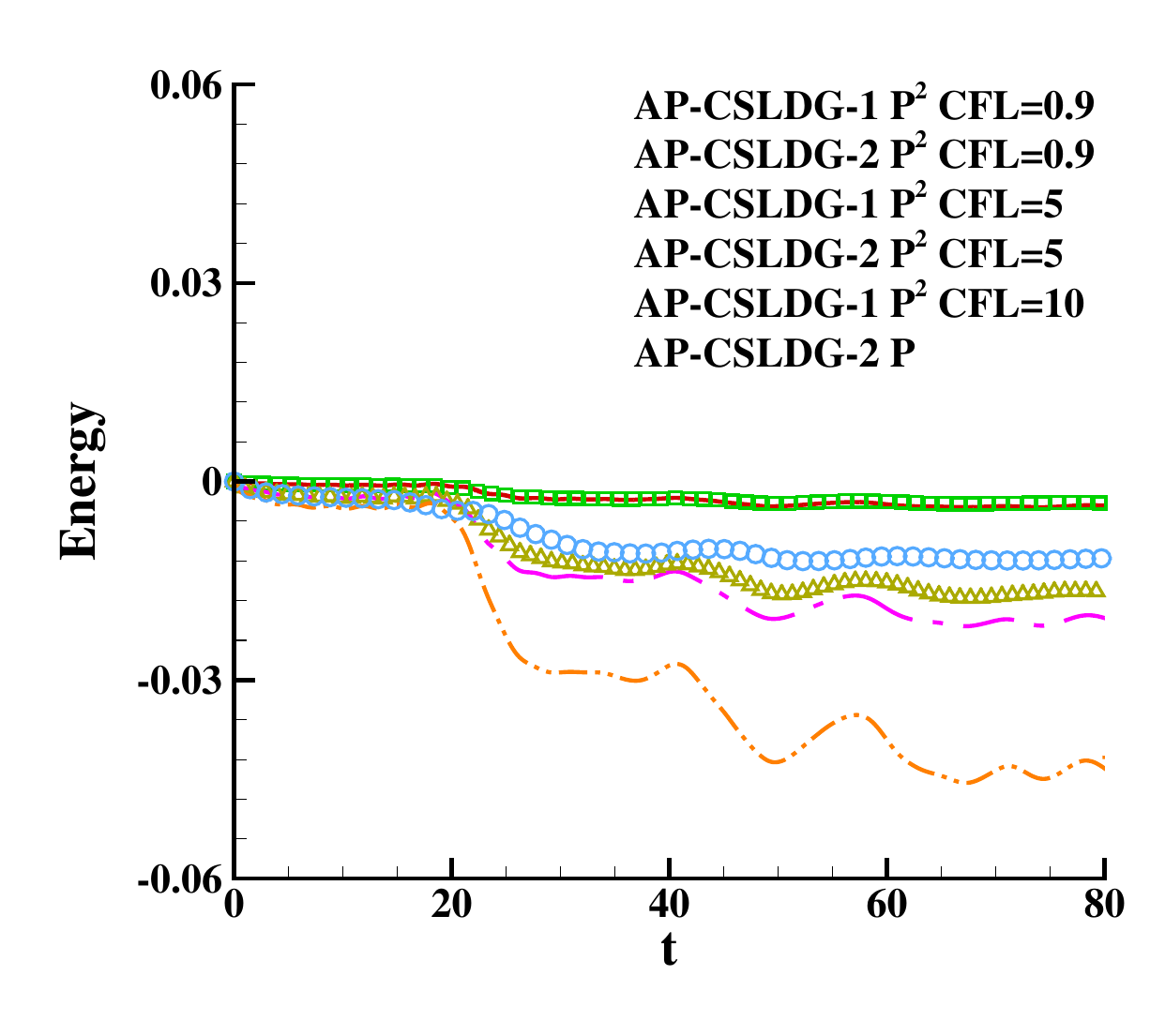}
    \end{minipage}

    \caption{Two stream instability I: $P^1$ (left) and $P^2$ (right) polynomial spaces, time evolution of the relative deviations for entropy (top) and energy (bottom) with different CFL numbers.} 
    \label{fig:ex3Conser_2}
\end{figure}
\begin{figure}[h!]
    \centering    
    \begin{minipage}[t]{0.48\linewidth}
        \centering
        \includegraphics[width=\linewidth]{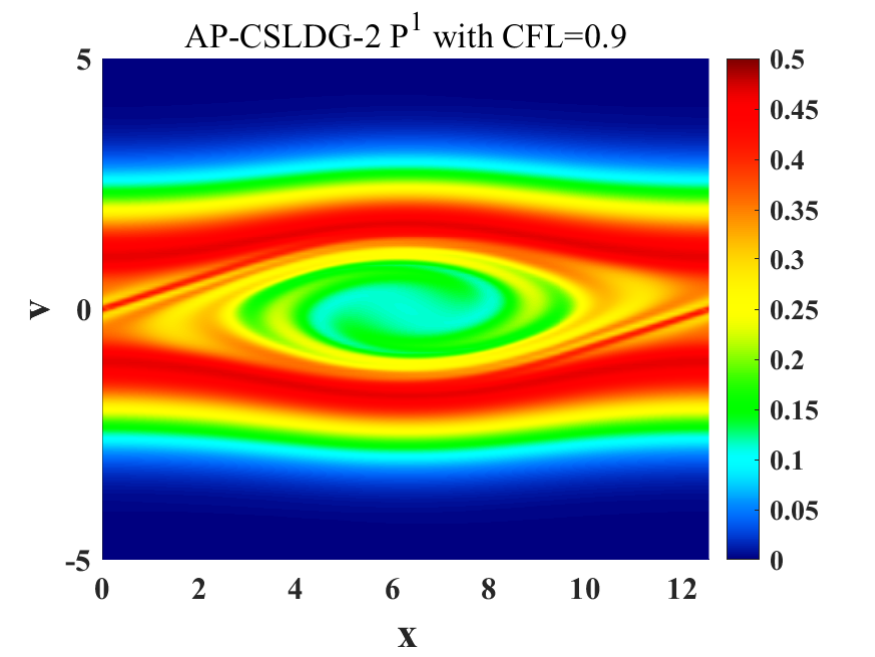}
    \end{minipage}
    \hfill
    \begin{minipage}[t]{0.48\linewidth}
        \centering
        \includegraphics[width=\linewidth]{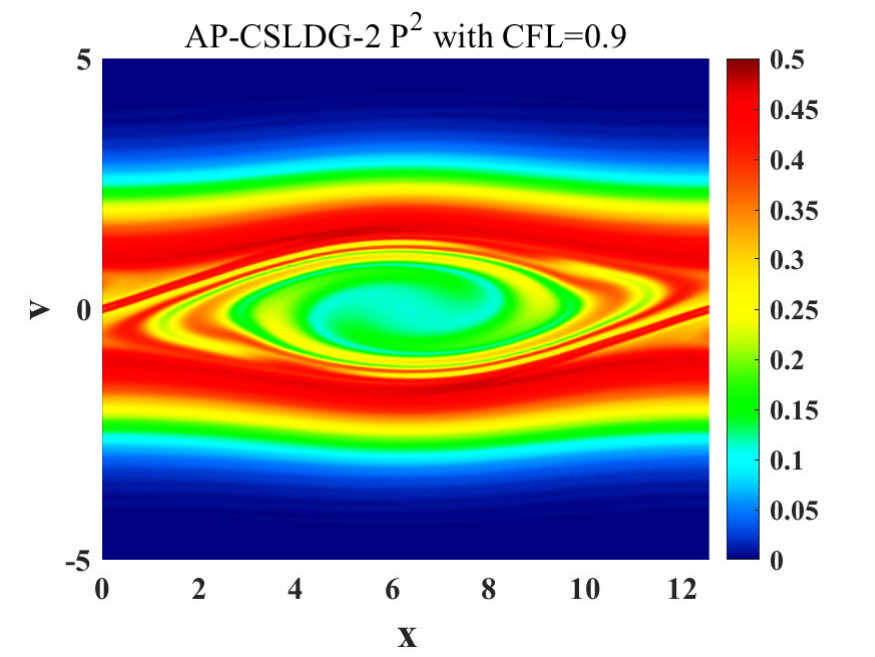}
    \end{minipage}

    \vspace{1em} 
    
    \begin{minipage}[t]{0.48\linewidth}
        \centering
        \includegraphics[width=\linewidth]{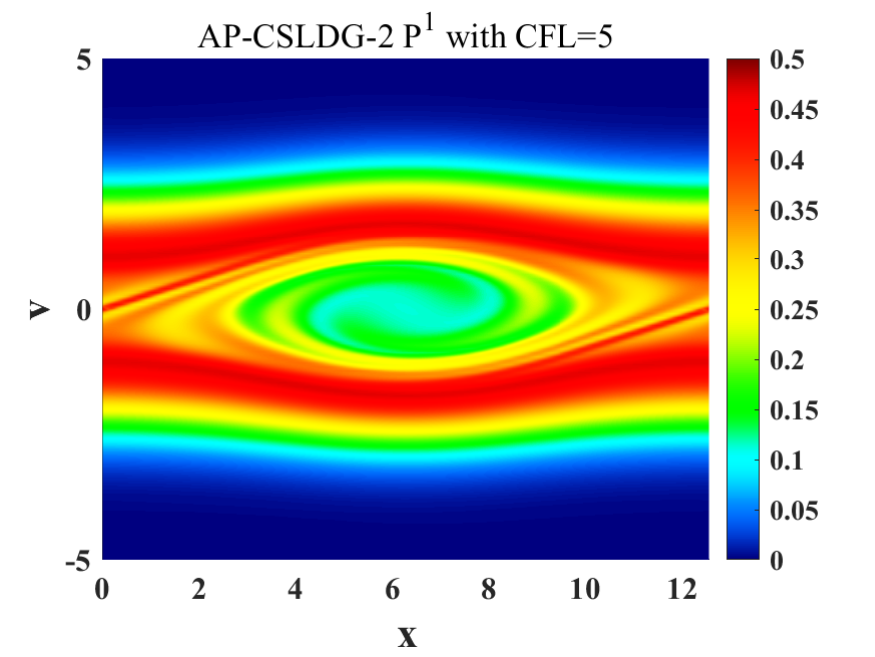}
    \end{minipage}
    \hfill
    \begin{minipage}[t]{0.48\linewidth}
        \centering
        \includegraphics[width=\linewidth]{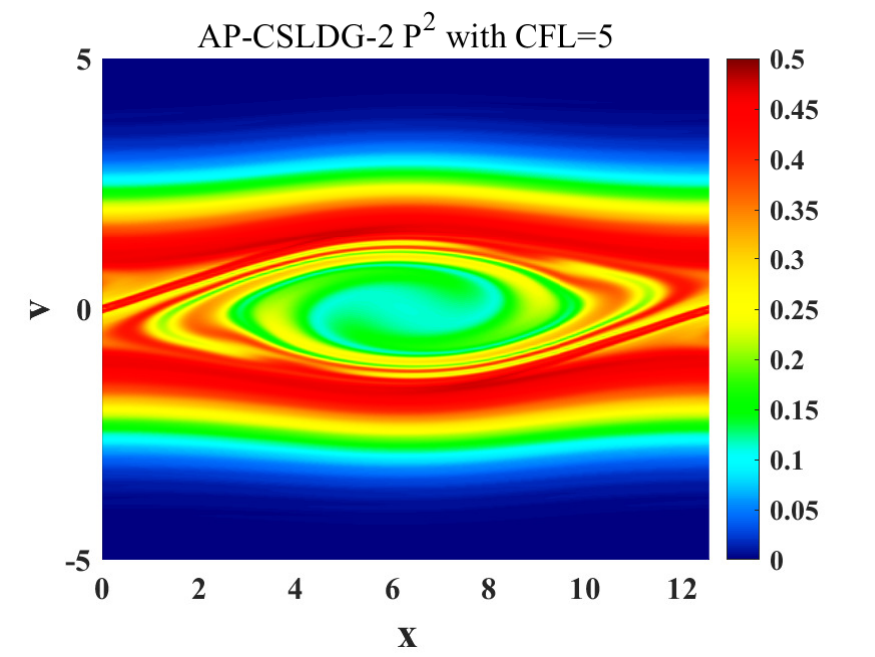}
    \end{minipage}

    \vspace{1em} 
    
    \begin{minipage}[t]{0.48\linewidth}
        \centering
        \includegraphics[width=\linewidth]{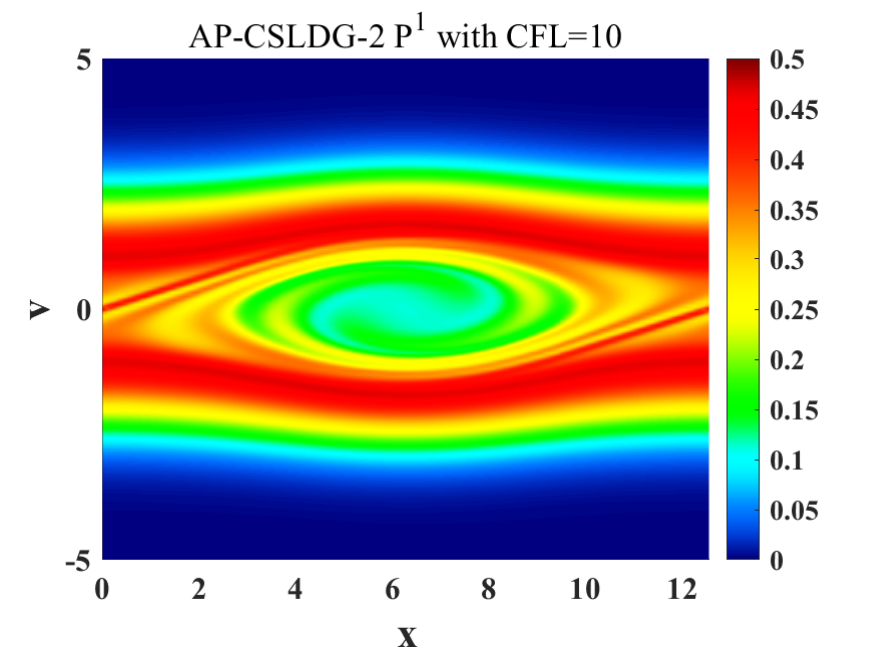}
    \end{minipage}
    \hfill
    \begin{minipage}[t]{0.48\linewidth}
        \centering
        \includegraphics[width=\linewidth]{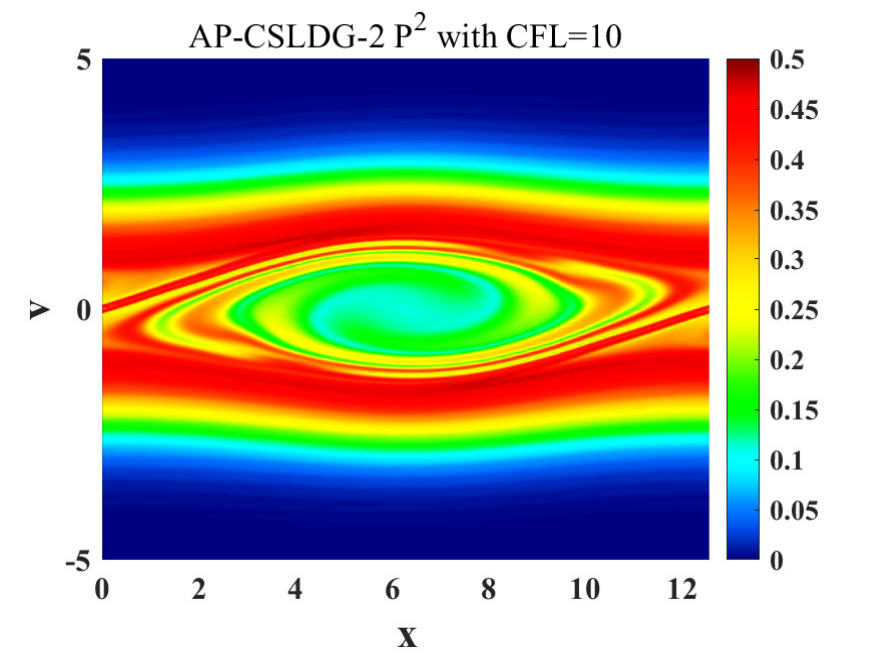}
    \end{minipage}

    \caption{Two stream instability I: The distribution function simulated by AP-CSLDG-2 scheme with different CFL numbers at $T=53$.} 
    \label{fig:ex3function}
\end{figure}
\hspace{4pt}
Consider the symmetric warm two stream instability problem, in which the electron distribution function of the VP system starts with the following unstable initial condition: 
\begin{align}\label{Instability1}
    f_0(x,v)=\frac{2}{7\sqrt{2\pi}}(1+5v^2)\left(1+\alpha((\text{cos}(2k x)+\text{cos}(3k x))/1.2+\text{cos}(k x))\right)\text{exp}\left(-\frac{v^2}{2}\right)
\end{align}
with $\alpha=0.01$ and $k=0.5$. The phase space is $\Omega_x\times\Omega_v = [0,2\pi/k]\times[-10,10]$ and is discretized with $256\times 256$ cells in this experiment. The final time $T$ is set to $80$.

The AP-CSLDG schemes with $P^1$ and $P^2$ polynomial spaces deliver consistent electrostatic energy results for different CFL numbers, as shown in Fig.~\ref{fig:ex3EL2}.
Mass conservation is also maintained by the AP-CSLDG schemes for different CFL numbers, as shown in Fig.~\ref{fig:ex3Conser_1}. The results presented in
Fig.~\ref{fig:ex3Conser_1} and Fig.~\ref{fig:ex3Conser_2} show that the $L^2$ norm and relative changes in entropy behave correctly. 
While the total energy of the VP system is not strictly conserved, it remains stable as the CFL number increases.

Inspecting the distribution function in the phase space at $T=53$, as illustrated in Fig.~\ref{fig:ex3function}, we find that the AP-CSLDG-2 scheme exhibits same behavior for different CFL numbers.

Through this case, we demonstrate that our AP-CSLDG schemes achieve consistent and stable simulation results across varying CFL numbers.

\subsubsection{Two stream instability II}
\hspace{4pt}

Let us now run the symmetric two stream instability test. Similarly to  \cite{CROUSEILLES2010}, we initialize $f$ by
\begin{align}\label{Instability2}
    f_0(x,v)=\frac{1}{2v_{\text{th}}\sqrt{2\pi}}\left(1+0.05\text{cos}(kx)\right)\left[\text{exp}\left(-\frac{(v-\text{u})^2}{2v^2_{\text{th}}}\right)+\text{exp}\left(-\frac{(v+\text{u})^2}{2v^2_{\text{th}}}\right)\right]
\end{align}
with u=0.99, ~$v_{\text{th}}=0.3$~and~$k=\frac{2}{13}$. 
The phase space is $\Omega_x\times\Omega_v = [0,2\pi/k]\times[-5,5]$. 
Unless otherwise specified, the CFL number is set to be 3 and the phase space is discretized with $N_x\times N_v$ cells, where ~$N_x=64,N_v=128$ or $N_x=128,N_v=256$~ in this experiment.

We test the AP-CSLDG-2 scheme with $P^2$ and $P^3$ polynomial spaces. 
When the numerical simulation reaches $T=40$, as shown in Fig.~\ref{fig:ex4function@t40}, it can be observed that higher order polynomial spaces capture more detailed physical structure of the distribution function. 


\begin{figure}[h!]
    \centering    
    \begin{minipage}[t]{0.48\linewidth}
        \centering
        \includegraphics[width=\linewidth]{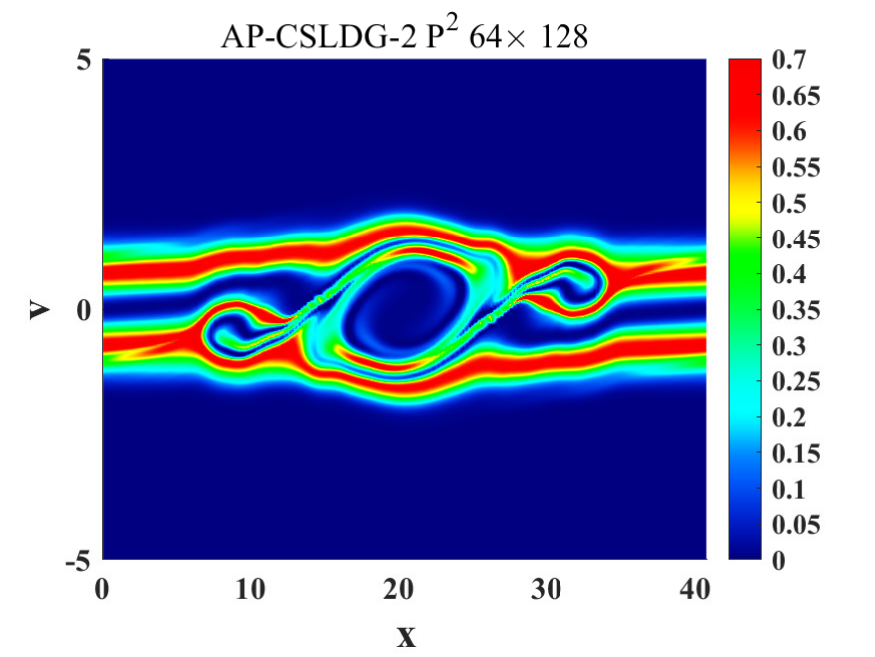}
    \end{minipage}
    \hfill
    \begin{minipage}[t]{0.48\linewidth}
        \centering
        \includegraphics[width=\linewidth]{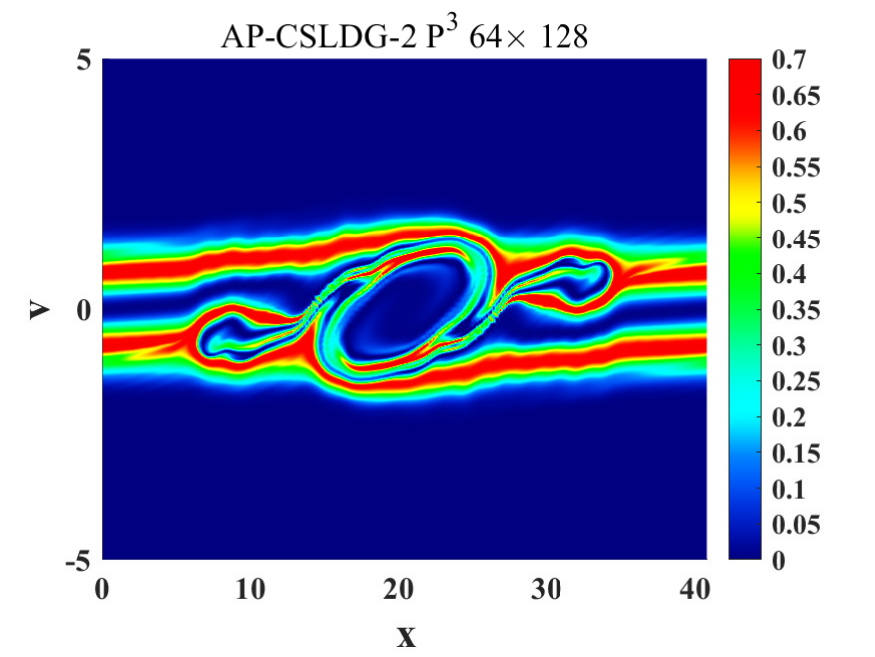}
    \end{minipage}

    \vspace{1em} 
    
    \begin{minipage}[t]{0.48\linewidth}
        \centering
        \includegraphics[width=\linewidth]{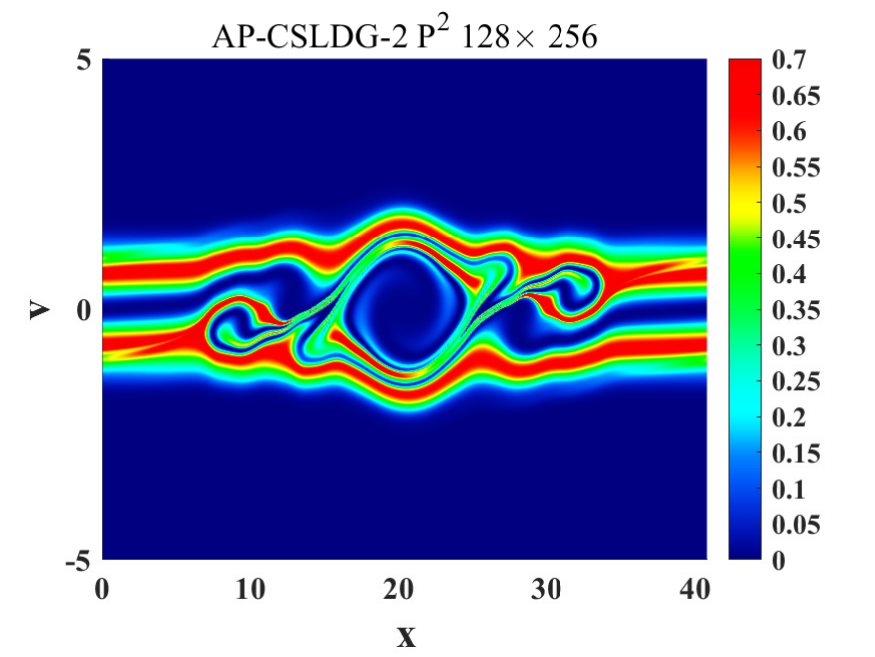}
    \end{minipage}
    \hfill
    \begin{minipage}[t]{0.48\linewidth}
        \centering
        \includegraphics[width=\linewidth]{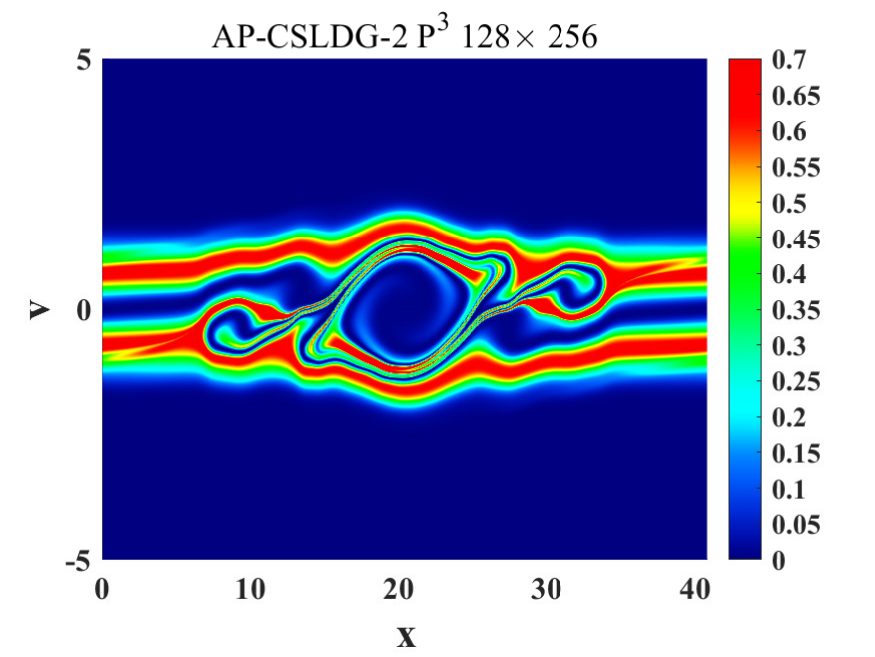}
    \end{minipage}

    \caption{Two stream instability II: $P^2$ (left) and $P^3$ (right) polynomial spaces, the distribution function simulated by AP-CSLDG-2 scheme with different discretization at $T=40$ with CFL=3.} 
    \label{fig:ex4function@t40}
\end{figure}
\subsection{Numerical experiments for the RVP system in quasi-neutral regime}
\hspace{4pt}
Let us now verify that the AP-CSLDG-1 scheme possesses the AP properties in the quasi-neutral limit via numerical experiments that include near equilibrium and bump-on-tail instability simulations.
\subsubsection{Near equilibrium}
\hspace{4pt}
We first consider the initial distribution function
\begin{align*}
    f_0(x,v)=\frac{1}{\sqrt{2\pi}}(1+\alpha\cos(\frac{x}{2}))\text{exp}(-\frac{v^2}{2}).
\end{align*}
The corresponding initial density is 
\begin{align*}
    \rho(x,0)=1+\alpha\cos(\frac{x}{2})\text{erf}(\frac{v_m}{\sqrt{2}}),
\end{align*}
where $\text{erf}(z)=\frac{2}{\sqrt{\pi}}\int_0^ze^{-t^2}\mathrm dt$ is the error function. 
The initial momentum is thus given by
\begin{align*}
    (\rho u)(x,0)=0.
\end{align*}

If we set $\alpha=0$ ($\alpha=10^{-16}$ in numerical experiments) and run simulations in $\Omega_x\times\Omega_v = [0,4\pi]\times[-12,12]$, then the initial condition can be regarded as satisfying the quasi-neutral state constraint, i.e., $\rho(x,0)=1$ and $\partial_x(\rho u)(x,0)=0$.
In particular, the electrostatic potential is updated by
\begin{align*}
    \partial_x(\rho^*\partial_x\phi)=\partial_{xx}S^n,
\end{align*}
as assumed in Theorem~\ref{theorem:consistency}.
The phase space is discretized with $128\times 128$ cells and the final time is $T=80$. 
We test the AP-CSLDG-1 scheme with $P^1$ polynomial space.

We control the time step through the CFL number in Eq.~\eqref{dt} to examine how well the quasi-neutral state is preserved under different time step sizes. 
The results as shown in Fig.~\ref{fig:Near equilibrium} indicate the existence of a critical CFL value (is approximately 6) : when the CFL number is less than or equal to this threshold, the electron change density deviates from 1 only within machine precision in the $L^2$ norm; 
however, when the CFL number exceeds this critical value, the error accumulates over time and eventually stabilizes.
In the future work, we will conduct a more detailed investigation of this phenomenon.
\begin{figure}[h!]
    \centering
    \includegraphics[width=0.48\linewidth]{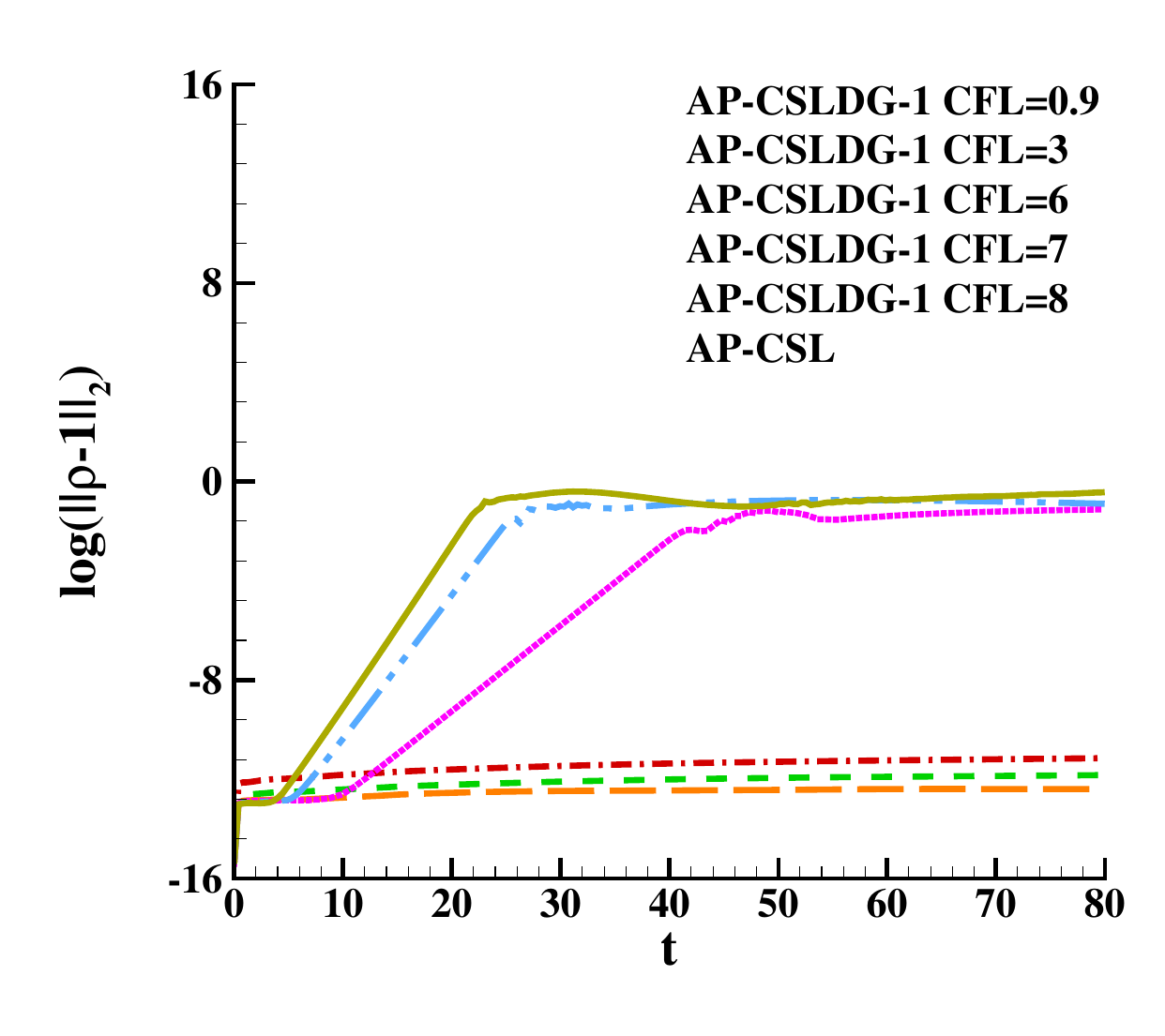}
    \caption{Near equilibrium: The logarithm of $\Vert \rho-1\Vert_{L^2}$ simulated by AP-CSLDG-1 scheme with different CFL numbers.} 
    \label{fig:Near equilibrium}
\end{figure}

\subsubsection{Bump-on-tail instability}
\hspace{4pt}
Finally, we verify that the AP-CSLDG-1 scheme possesses AP properties by simulating bump-on-tail instability in quasi-neutral limit. 
The initial condition is set to be
\begin{align}\label{Instability3}
    f_0(x,v)=f_p(v)(1+\alpha \text{cos}(kx))
\end{align}
with~$k=0.3$,~$\alpha=0.04(0.01+0.99\lambda)$. The distribution function~$f_p$~is a bump of Maxwell distribution on its tail, which is given by
\begin{align*}
    f_p(v)=n_p\text{exp}(-\frac{v^2}{2})+n_b\text{exp}(-\frac{(v-u)^2}{2RT})
\end{align*}
with~$n_p=\frac{9}{10\sqrt{2\pi}}$~,~$n_p=\frac{2}{10\sqrt{2\pi}}$~,~$u=4.5$~and~$RT=0.25$. 
The phase space is $\Omega_x\times\Omega_v = [0,L]\times[v_{\text{min}},v_{\text{max}}]$, where ~$L=2\pi/k$. 
For all simulations, the domain is discretized with $N_x\times N_v=256\times256$, and $v_{\text{min}}=-6$,~$v_{\text{max}}=9$~for~$\lambda >10^{-3}$, 
while $v_{\text{min}}=-12$,~$v_{\text{max}}=12$~for~$\lambda\leq 10^{-3}$. 

Consider the case in non-quasi-neutral regime ($\lambda=1$), where the spatial size and time step resolve the dimensionless Debye length, i.e., $\Delta x=0.16\lambda$ and $\Delta t \approx 0.008\lambda$.
As shown in Fig.~\ref{fig:ex5lambda1}, the AP-CSLDG-1 scheme and the reference CSLDG scheme produce similar results in terms of electrostatic energy and time stepping.

When $\lambda=0.1$, we take $N_x=256,128,64$ so that $\Delta x$ gradually exceeds the Debye length. 
Additionally, we set CFL=0.9, controlling $\Delta t < \lambda$, which still resolves the Debye length in time.
The $P^2$ polynomial space is used in this experiment.
The comparison between the AP-CSLDG-1 scheme and the reference CSLDG scheme is shown in Fig.~\ref{fig:ex5lambda01_gd}. 
From the left panel, it can be seen that even after $\Delta x$ exceeds the Debye length, the electrostatic energy remains consistent for both schemes.
The right panel shows that the time step continues to resolve the Debye length.
Furthermore, due to Eq.~\eqref{dt}, $\Delta t$ also reflects the change in the electric field, indicating that the electric field remains stable.

Since the AP-CSLDG and CSLDG schemes are not restricted by the CFL condition, we can increase the CFL number to allow the time step size to exceed the Debye length. 
With the spatial step size fixed at $\Delta x=6.5\lambda~(N_x=32)$, we select CFL = 0.9, 3, and 5, respectively. 
As shown in Fig.~\ref{fig:ex5lambda01_bd}, for CFL = 5 the electrostatic energy $\varepsilon_p$ obtained by the CSLDG scheme reaches an order of $10^2$ within a short time. 
Simultaneously, due to Eq.~\eqref{dt}, the time step size $\Delta t$ decreases rapidly. 
When the electrostatic field becomes too large, the distribution function is pushed beyond the velocity space, causing the CSLDG scheme to fail\cite{LIU2020}.
Fig.~\ref{fig:ex5lambda01_gdd} shows the computational results of the AP-CSLDG-1 method are still reasonable under the same circumstances. 
Even though the time step exceeds the Debye length, the numerical solution
remains consistent and stable. 
This confirms that the AP-CSLDG-1 scheme recovers the correct quasi-neutral limit without requiring refined meshes or small time steps.

Finally, to verify the AP properties, simulations of the VP system are conducted by AP-CSLDG-1 scheme for Debye lengths $\lambda=10^{-3},10^{-6},0$, based on a fixed discretization with $N_x=N_v=256$ and a fixed time step $\Delta t=10^{-3}$. 
As shown in the left panel of Fig.~\ref{fig:ex5quasi}, the electrostatic field energy of the system converges to the value that it has in the case $\lambda=0$. 
Furthermore, the right panel shows that the AP-CSLDG-1 scheme maintains mass conservation well for different Debye lengths.
\begin{figure}[h!]
    \centering    
    \begin{minipage}[t]{0.48\linewidth}
        \centering
        \includegraphics[width=\linewidth]{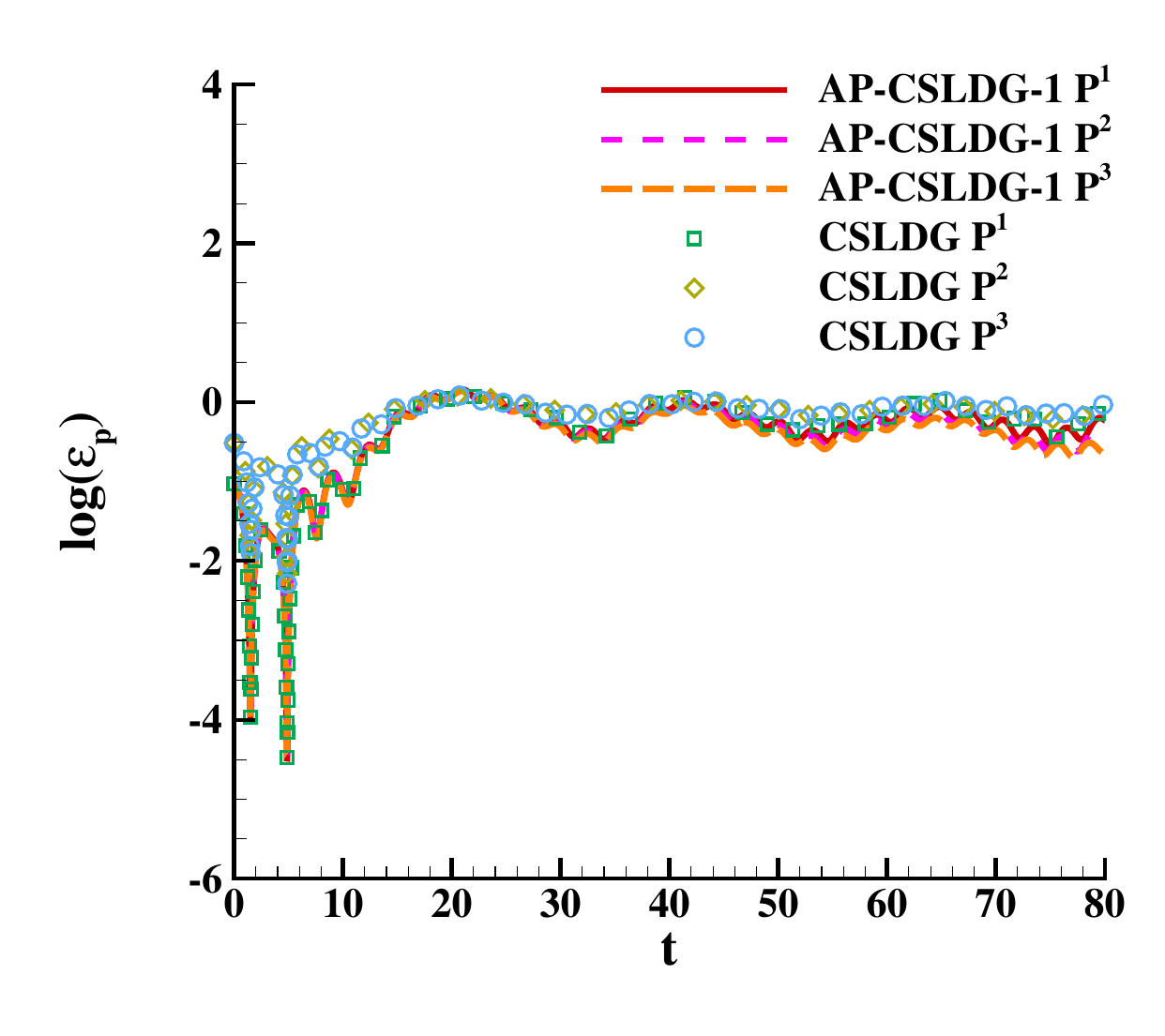}
    \end{minipage}
    \hfill
    \begin{minipage}[t]{0.48\linewidth}
        \centering
        \includegraphics[width=\linewidth]{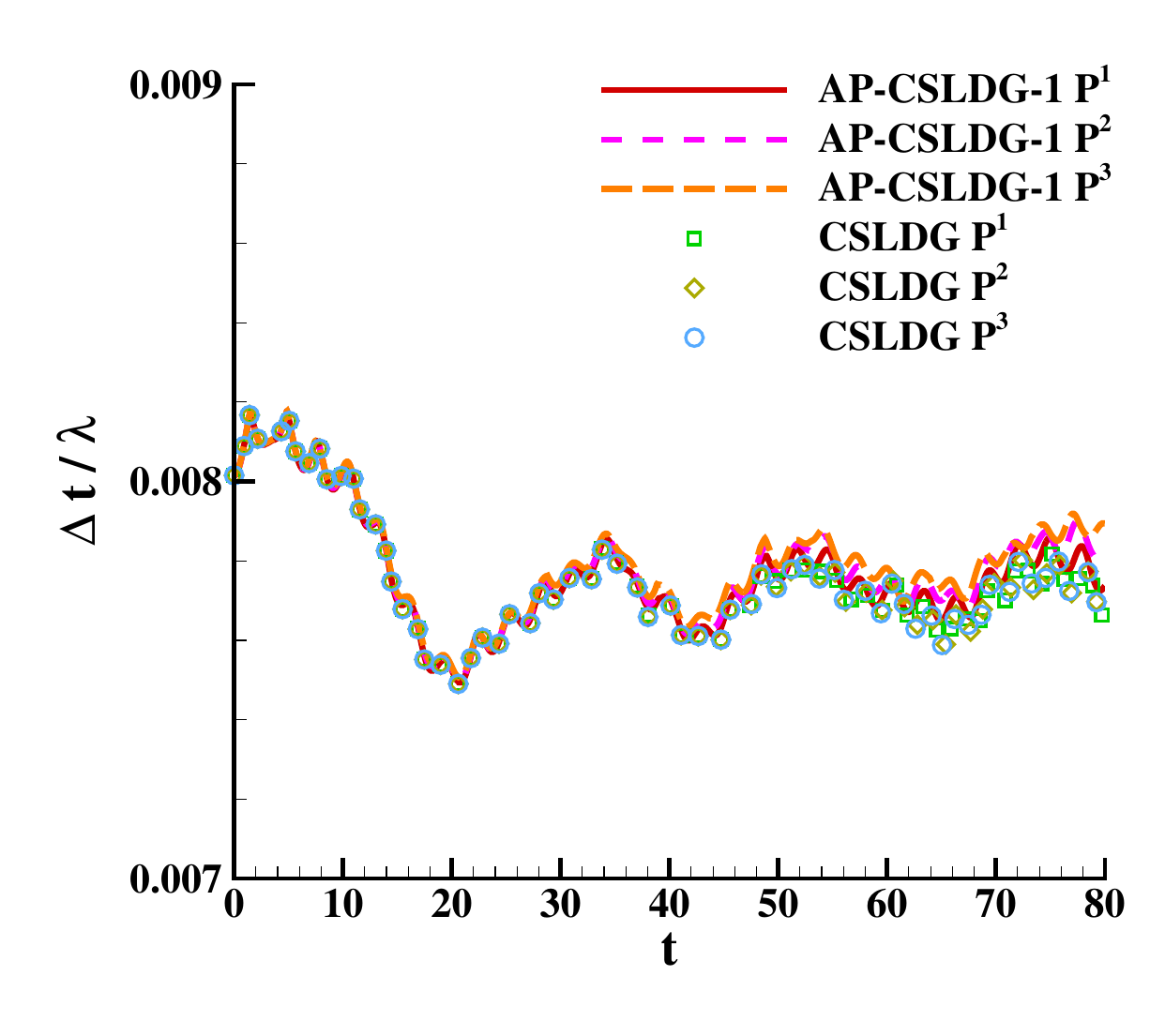}
    \end{minipage}
    \caption{Bump-on-tail instability: In non-quasi-neutral regime ($\lambda=1$), time evolution of the logarithm of electrostatic energy (left) and the ratio of time step to Debye length (right) simulated by AP-CSLDG-1 scheme and CSLDG scheme.} 
    \label{fig:ex5lambda1}
\end{figure}
\begin{figure}[h!]
    \centering    
    \begin{minipage}[t]{0.48\linewidth}
        \centering
        \includegraphics[width=\linewidth]{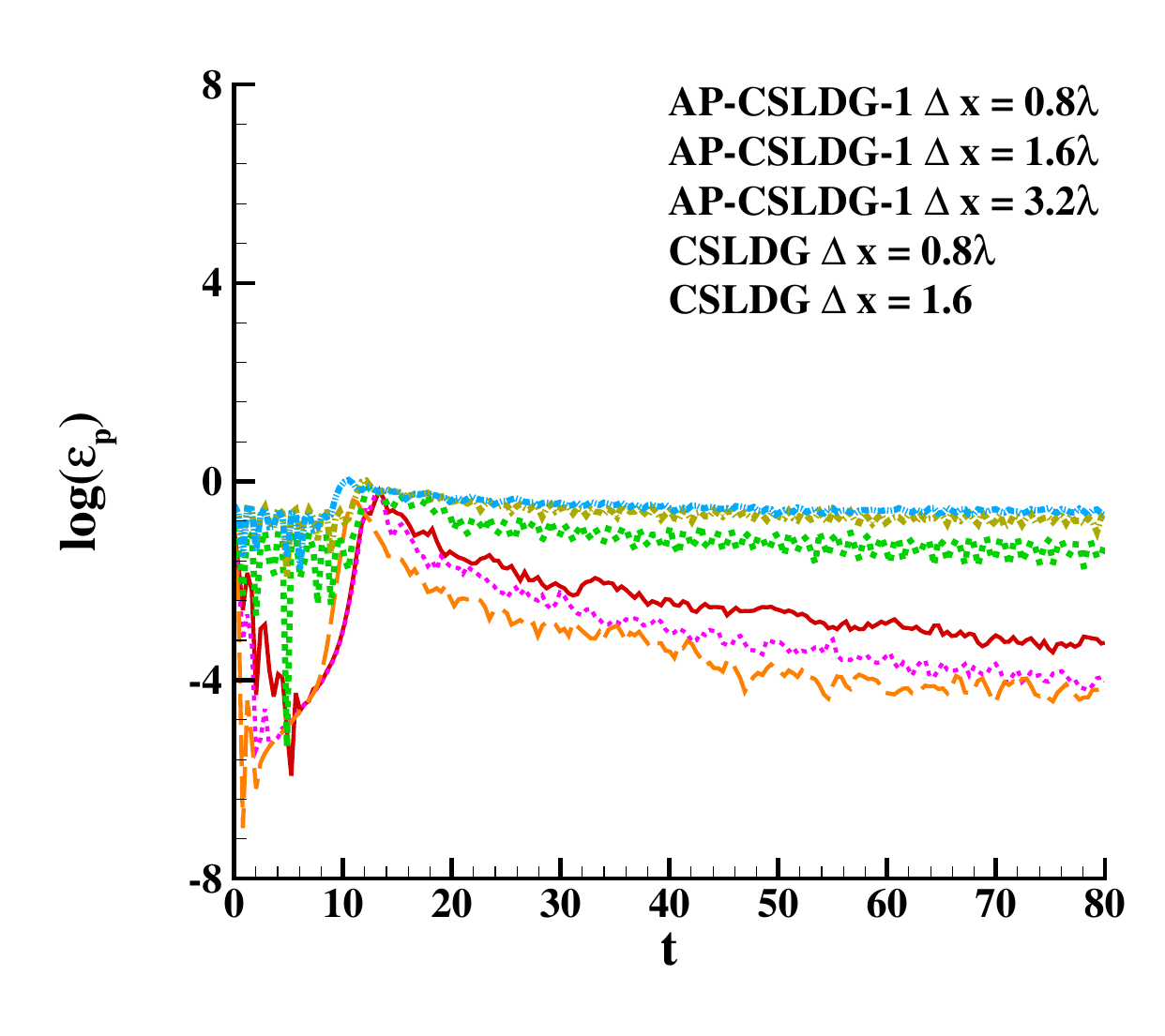}
    \end{minipage}
    \hfill
    \begin{minipage}[t]{0.48\linewidth}
        \centering
        \includegraphics[width=\linewidth]{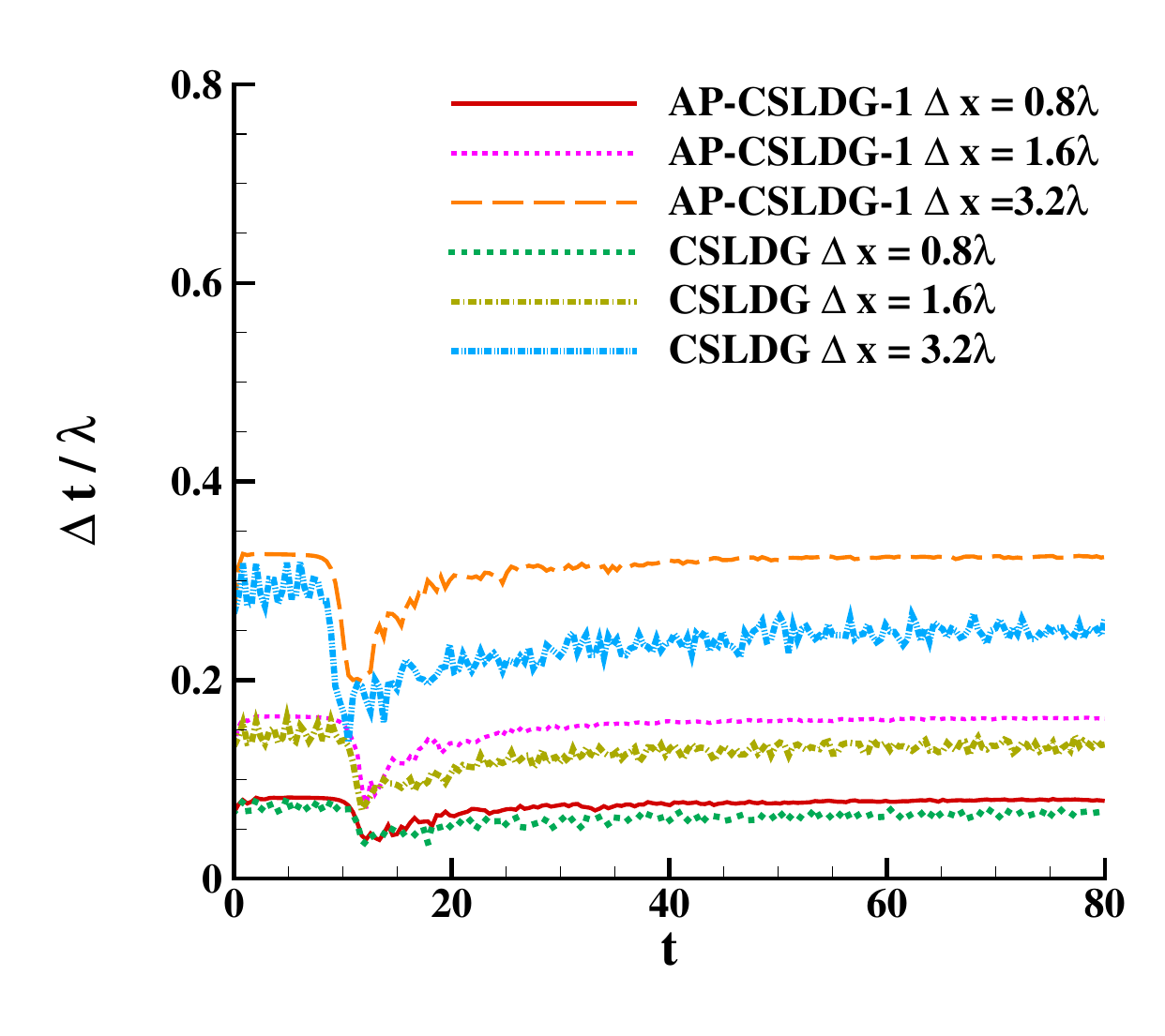}
    \end{minipage}
    \caption{Bump-on-tail instability: When $\lambda=0.1$, time evolution of the logarithm of electrostatic energy (left) and the ratio of time step to Debye length (right) simulated by AP-CSLDG-1 scheme and CSLDG scheme.} 
    \label{fig:ex5lambda01_gd}
\end{figure}
\begin{figure}[h!]
    \centering    
    \begin{minipage}[t]{0.48\linewidth}
        \centering
        \includegraphics[width=\linewidth]{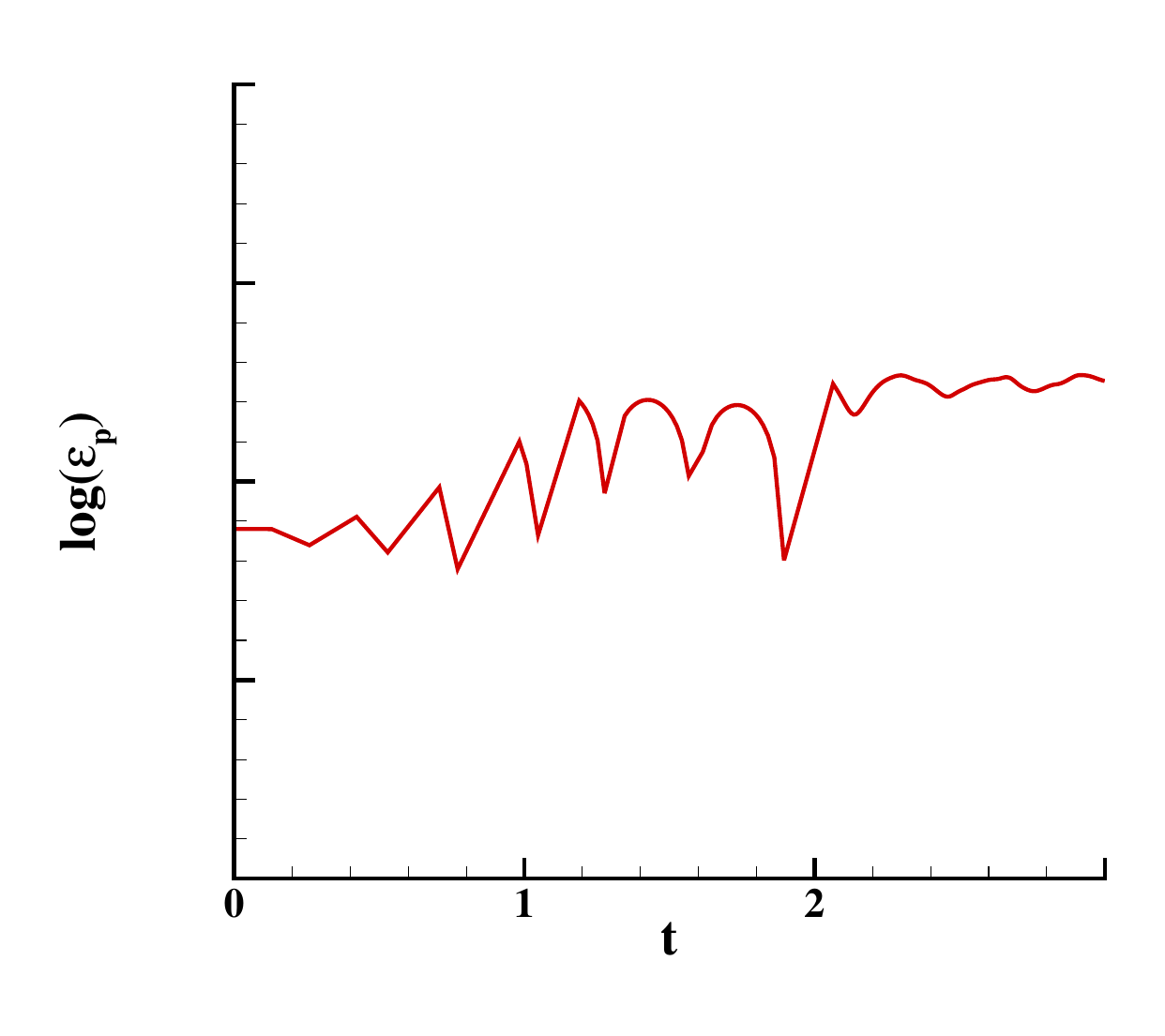}
    \end{minipage}
    \hfill
    \begin{minipage}[t]{0.48\linewidth}
        \centering
        \includegraphics[width=\linewidth]{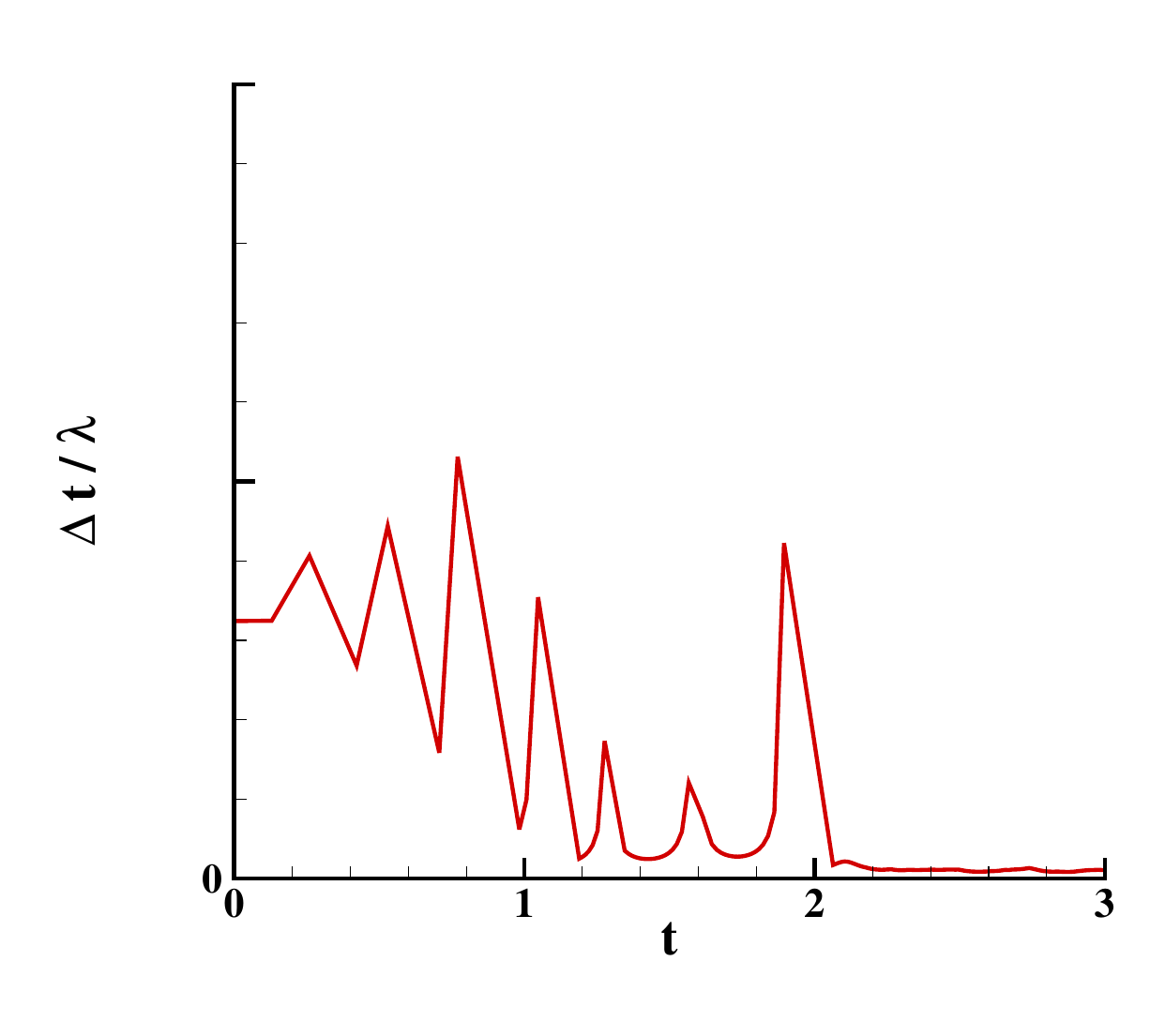}
    \end{minipage}
    \caption{Bump-on-tail instability: When $\lambda=0.1$, time evolution of the logarithm of electrostatic energy (left) and the ratio of time step to Debye length (right) simulated by CSLDG scheme.}
    \label{fig:ex5lambda01_bd}
\end{figure}
\begin{figure}[h!]
    \centering    
    \begin{minipage}[t]{0.48\linewidth}
        \centering
        \includegraphics[width=\linewidth]{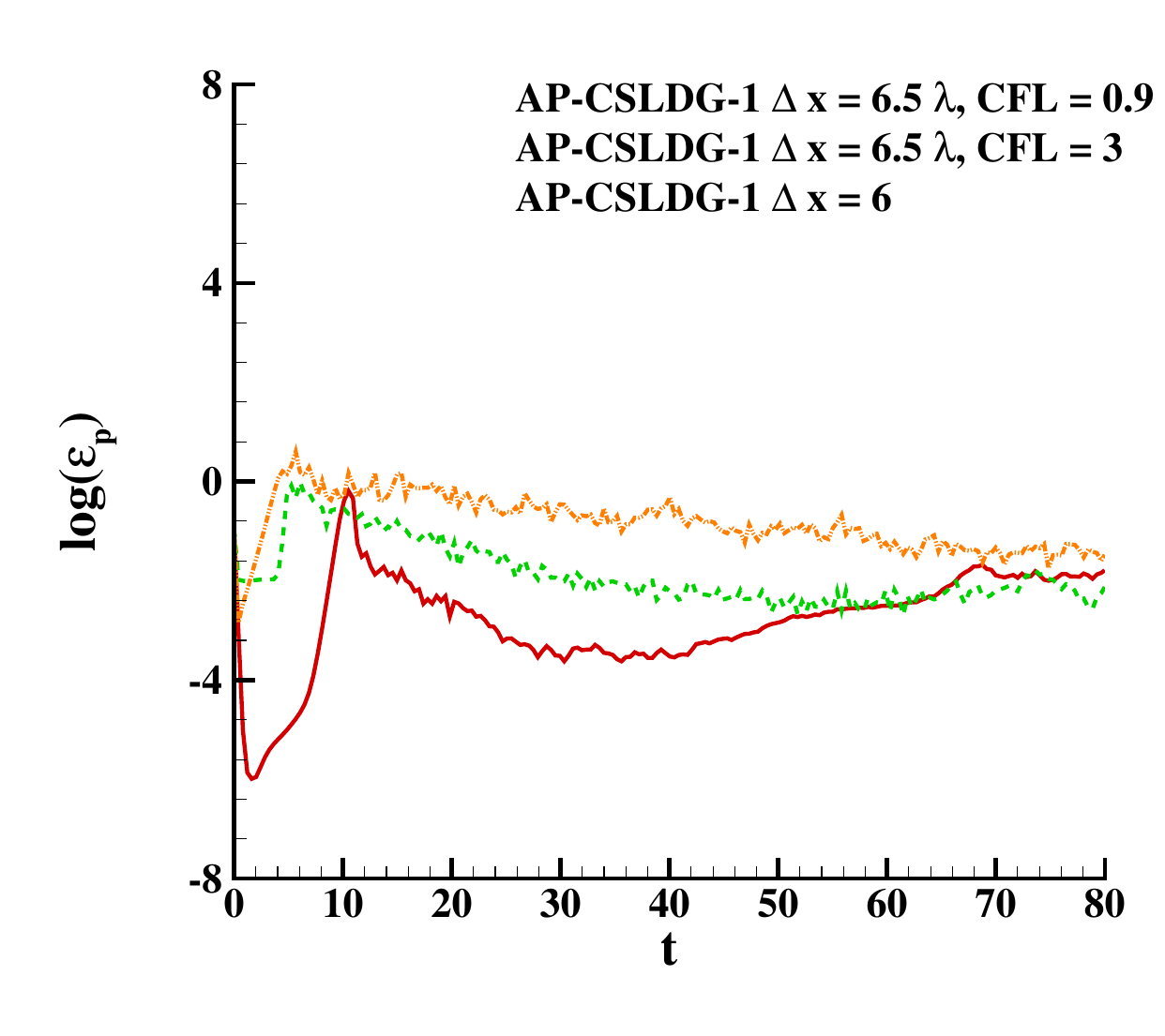}
    \end{minipage}
    \hfill
    \begin{minipage}[t]{0.48\linewidth}
        \centering
        \includegraphics[width=\linewidth]{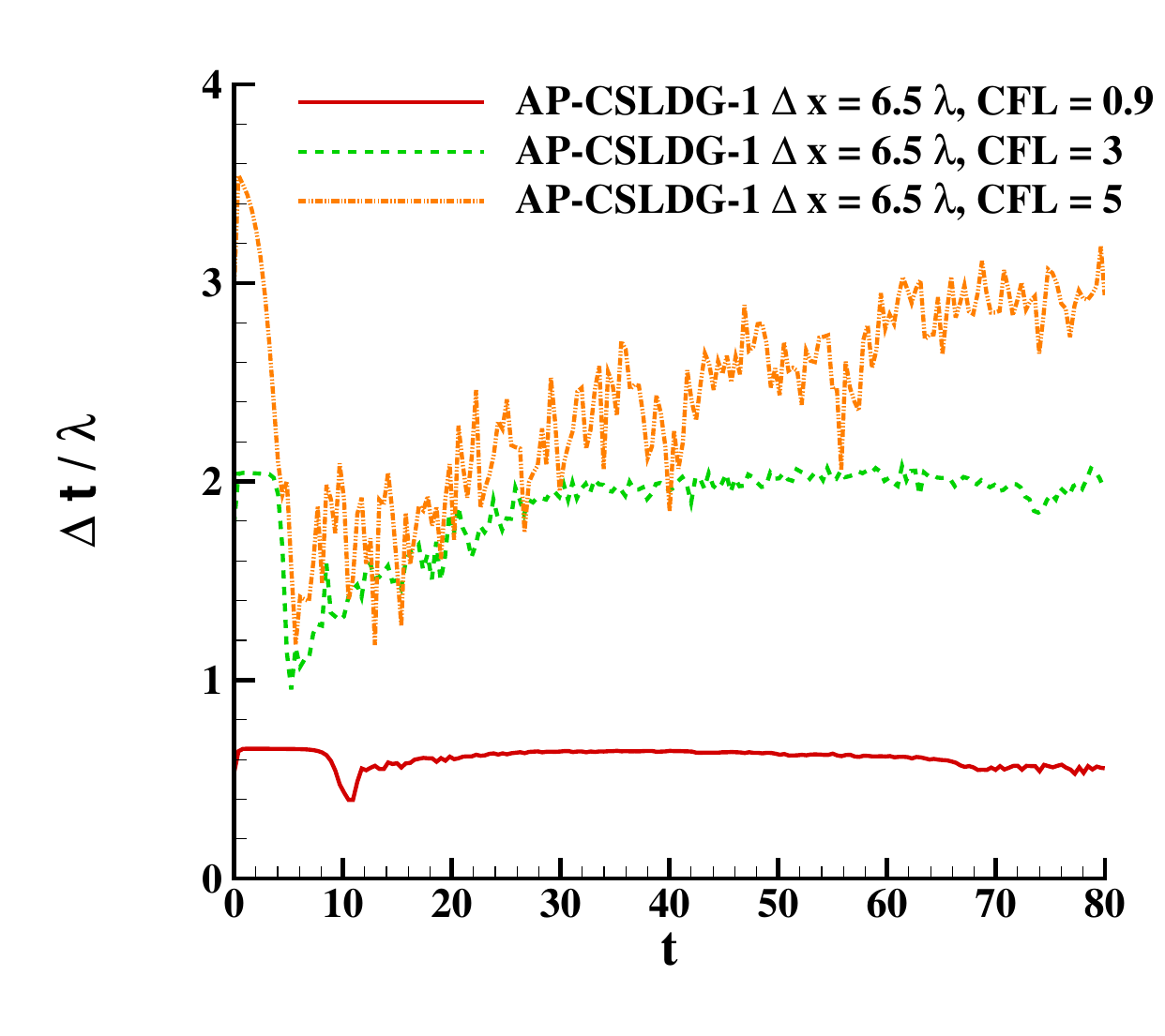}
    \end{minipage}
    \caption{Bump-on-tail instability: When $\lambda=0.1$, time evolution of the logarithm of electrostatic energy (left) and the ratio of time step to Debye length (right) simulated by AP-CSLDG-1 scheme.} 
    \label{fig:ex5lambda01_gdd}
\end{figure}
\begin{figure}[h!]
    \centering    
    \begin{minipage}[t]{0.48\linewidth}
        \centering
        \includegraphics[width=\linewidth]{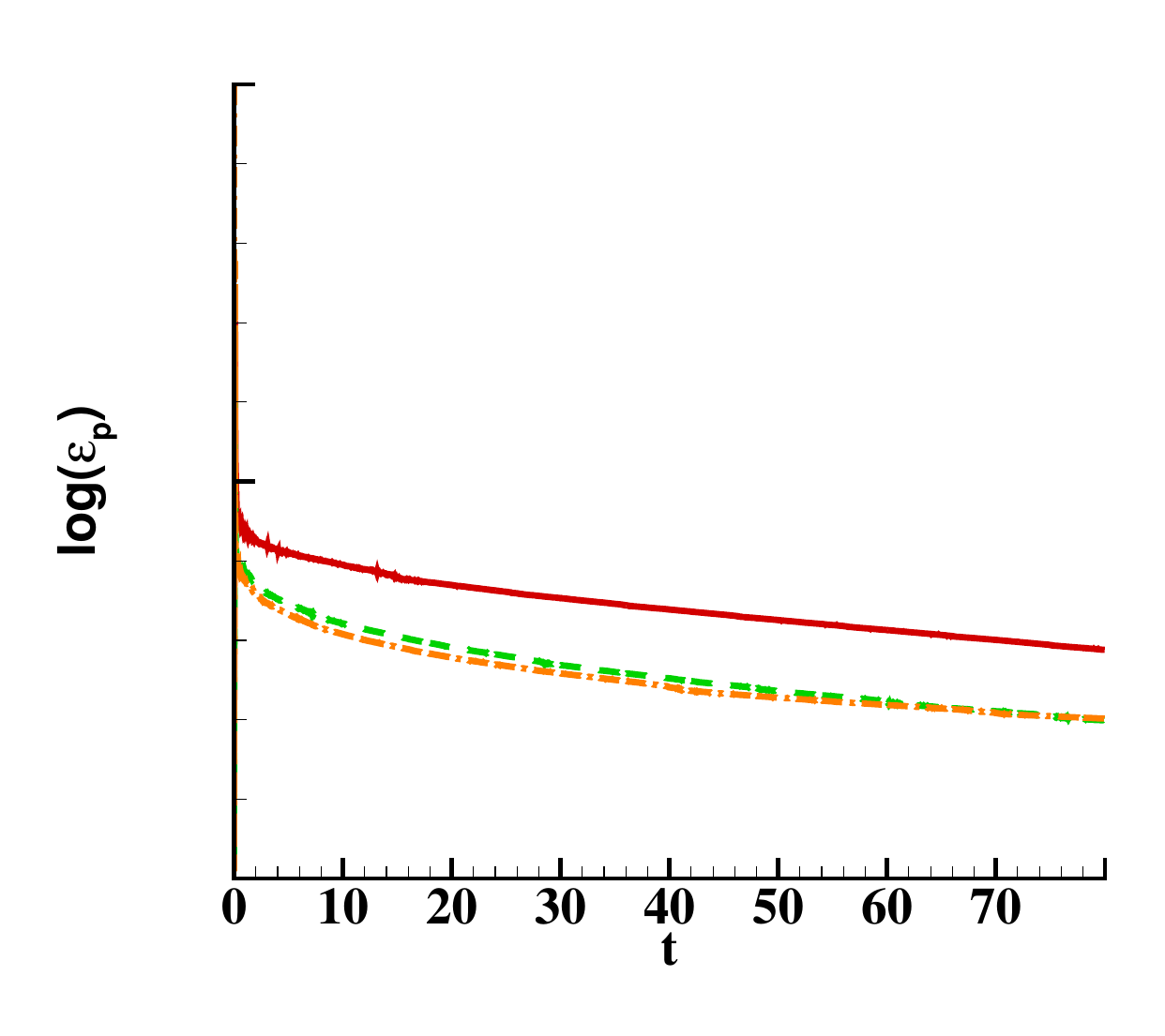}
    \end{minipage}
    \hfill
    \begin{minipage}[t]{0.48\linewidth}
        \centering
        \includegraphics[width=\linewidth]{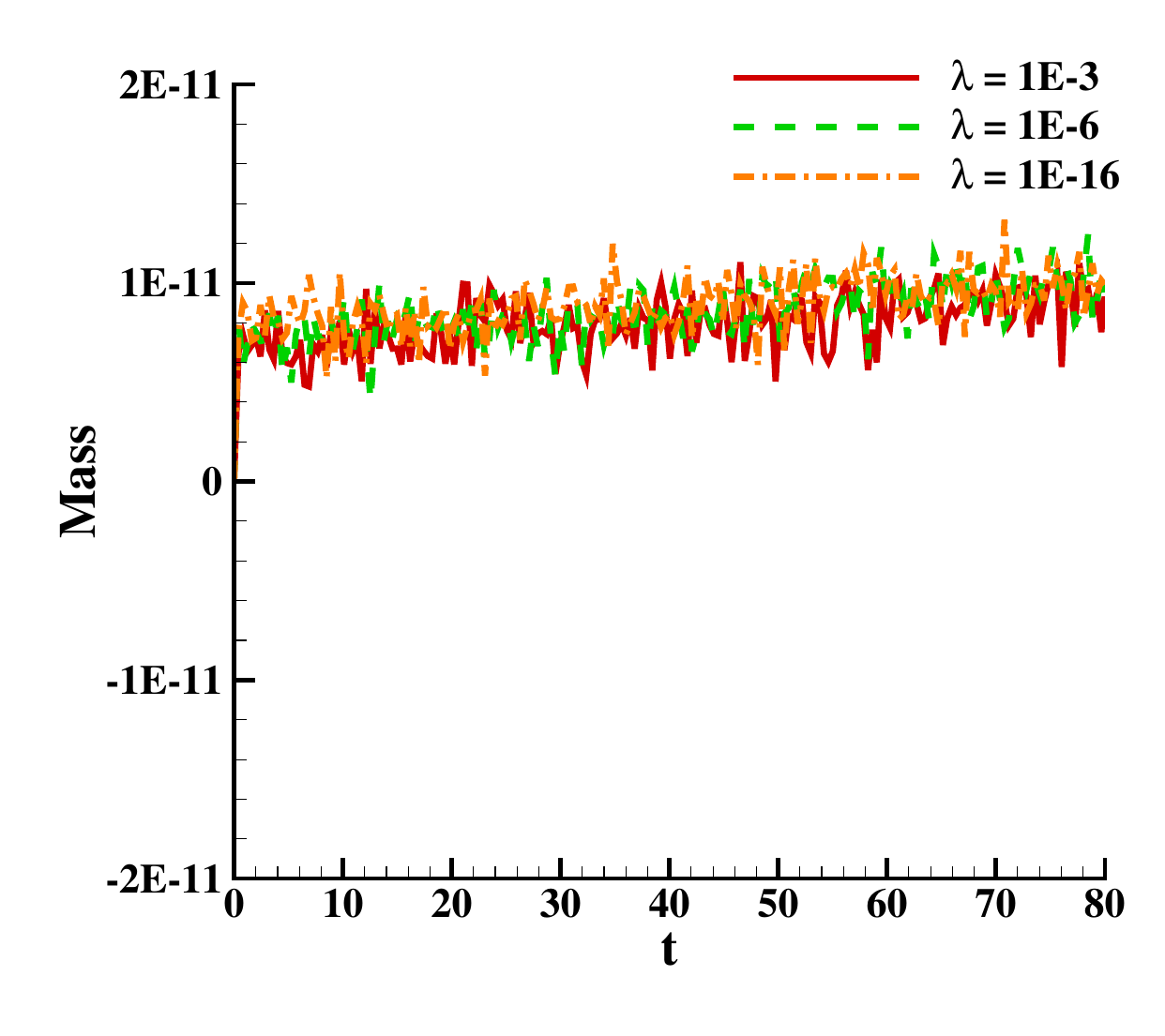}
    \end{minipage}
    \caption{Bump-on-tail instability: When $\lambda\to 0$, time evolution of the logarithm of electrostatic energy (left) and the relative deviations for mass (right) simulated by AP-CSLDG-1 scheme.}
    \label{fig:ex5quasi}
\end{figure}

\section{Conclusions}

\hspace{4pt}
We have presented asymptotic‑preserving conservative semi‑Lagrangian discontinuous Galerkin (AP‑CSLDG) schemes for the Vlasov–Poisson (VP) system in the quasi‑neutral limit. The proposed approach combines (i) a reformulation of the VP system that remains well defined in the quasi‑neutral regime, (ii) a semi‑Lagrangian treatment of the Vlasov equation, (iii) a discontinuous Galerkin spatial discretization, and (iv) an appropriate operator‑splitting strategy. These ingredients enable the proposed schemes to provide reliable results in both non‑quasi‑neutral and quasi‑neutral regimes, even when the grid spacing and time step do not resolve the Debye length. The conservative semi‑Lagrangian solver updates numerical solutions in a stable and efficient manner, while exactly conserving the total particle number. The high‑order DG discretization reliably captures fine‑scale plasma dynamics on relatively coarse meshes. We have also performed theoretical analysis of our schemes. Their asymptotic‑preserving properties are certified by rigorous proofs of asymptotic consistency and stability.

Importantly, the AP‑CSLDG splitting schemes offer a per‑step computational cost comparable to that of explicit methods. The above properties of our approach makes it an attractive practical choice for multiscale plasma simulations. Moreover, the AP‑CSLDG framework is well suited for extensions to more realistic plasma models, including electromagnetic (Vlasov–Maxwell or Vlasov–Darwin) and collisional (BGK or Fokker–Planck) systems. Additionally, unsplit semi-Lagrangian methods \cite{Cai2018,sun2025fourth} could be explored to eliminate the need for operator splitting in multi-dimensional simulations.

\newpage
\textbf{Data availability} The datasets generated during and/or analyzed during the current study are available from
 the corresponding author upon reasonable request.

\section{Declarations}
The authors declared that they have no conflicts of interest to this work. 
We declare that we do not have any commercial or associative interest that represents a conflict of interest in connection with the work submitted.
\bibliographystyle{unsrt}
\bibliography{ref}

\end{document}